\newtheorem{theorem}{Theorem}[section]
\newtheorem{lemma}[theorem]{Lemma}
\newtheorem{proposition}[theorem]{Proposition}
\theoremstyle{definition}
\newtheorem{definition}[theorem]{Definition}
\newtheorem{remark}[theorem]{Remark}
\newcommand{\T}{\mathbb{T}}
\newcommand{\R}{\mathbb{R}}
\newcommand{\Z}{\mathbb{Z}}
\newcommand{\C}{\mathbb{C}}
\begin{document}
\title[Dispersion generalized Benjamin-Ono equations on the circle]
      {Local and global well-posedness of dispersion generalized Benjamin-Ono equations on the circle}

\author[R. Schippa]{Robert Schippa}
\address{Fakult\"at f\"ur Mathematik, Universit\"at Bielefeld, Postfach 10 01 31, 33501 Bielefeld, Germany}
 \keywords{dispersive equations, Benjamin-Ono equation, short time Fourier restriction norm method, modified energies}
\email{robert.schippa@uni-bielefeld.de}

\thanks{Financial support by the German Science Foundation (IRTG 2235) is gratefully acknowledged.}
\maketitle

\bigskip

\begin{abstract}
New local well-posedness results for dispersion generalized Benjamin-Ono equations on the torus are proved. The family of equations under consideration links the Benjamin-Ono and Korteweg-de Vries equation. For sufficiently strong dispersion global well-posedness in $L^2(\mathbb{T})$ is derived. 
\end{abstract}

\section{Introduction}
\label{section:Introduction}
In this article we prove new well-posedness results for the one-dimensional fractional Benjamin-Ono equation in the periodic case
\begin{equation}
\label{eq:GeneralizedFractionalBenjaminOnoEquation}
\left\{\begin{array}{cl}
\partial_t u + \partial_{x} D_x^{a} u &= u \partial_x u ,  \; (t,x) \in \mathbb{R} \times \mathbb{T}, \\
u(0) &= u_0 \in H^s(\mathbb{T}), \end{array} \right.
\end{equation}
where $\mathbb{T} = \mathbb{R}/(2\pi \mathbb{Z})$, $D_x = (-\Delta)^{1/2}$ and $1<a<2$ will be considered.\\
When we refer to local well-posedness in the following, we mean that the data-to-solution mapping $S_T^\infty: H^\infty(\T) \rightarrow C([0,T],H^\infty(\T))$ admits a continuous extension $S_T^s:H^s \rightarrow C([0,T],H^s)$ with $T=T(\Vert u_0 \Vert_{H^s})$ which can be chosen continuously on $\Vert u_0 \Vert_{H^s}$. The existence of $S_T^\infty:H^\infty \rightarrow C_T H^s$ follows from the classical energy method (cf. \cite{BonaSmith1975,AbdelouhabBonaFellandSaut1989}).\\
Note that in non-negative Sobolev spaces the quadratic nonlinearity is still well-defined in the sense of generalized functions, which is no longer true in negative Sobolev spaces.\\
Though the present results are far from novel contributions in the Benjamin-Ono case ($a=1$, cf. \cite{Benjamin1967, Ono1975}) or the Korteweg-de Vries case ($a=2$, cf. \cite{KortewegDeVries1895}), we recall important results in the well-posedness theory of these two special cases in order to highlight peculiarities of the well-posedness theory of \eqref{eq:GeneralizedFractionalBenjaminOnoEquation}:\\
The Benjamin-Ono and Korteweg-de Vries equation were extensively studied and we mainly collect the most recent well-posedness result in the periodic case.\\
Global well-posedness of the Benjamin-Ono equation in $L^2(\mathbb{T})$ was proved by Molinet in \cite{Molinet2008}, see also the previous work \cite{Molinet2007}.\\
In these works the gauge transform, which was originally applied by Tao on the real line in \cite{Tao2004}, was transferred to the periodic case. Scaling critical regularity is $s_c = -1/2$, but the $L^2$-well-posedness result from \cite{Molinet2008} is sharp as pointed out in \cite{Molinet2009}.\\
Bourgain proved global well-posedness of the Korteweg-de Vries equation in $L^2(\mathbb{T})$ in \cite{Bourgain1993} via Picard iteration in Fourier restriction spaces.\\
The argument was refined by Kenig-Ponce-Vega in \cite{KenigPonceVega1996} to prove local well-posedness in $H^{-1/2}(\mathbb{T})$ and global well-posedness in $H^{-1/2}(\T)$ was proved by Colliander et al. in \cite{CollianderKeelStaffilaniTakaokaTao2003}.\\
It is known that the data-to-solution mapping fails to be $C^2$ below $s=-1/2$, which is thus the limit of Picard iteration. Scaling critical regularity is $s_c = -3/2$ and by non-perturbative inverse scattering arguments Kappeler-Topalov proved global well-posedness in $H^{-1}(\T)$ in \cite{KappelerTopalov2006}.\\
The argument was recently simplified by Killip-Visan in \cite{KillipVisan2018}. Sharpness of well-posedness in $H^{-1}(\T)$ was proved by Molinet in \cite{Molinet2012}.\\
For the dispersion generalized equations on the real line
\begin{equation}
\label{eq:DispersionGeneralizedBenjaminOnoRealLine}
\left\{ \begin{array}{cl}
\partial_t u + \partial_x D_x^a u &= u \partial_x u \quad (t,x) \in \R \times \R, \\
u(0) &= u_0 \in H^s(\R) \end{array} \right.
\end{equation}
global well-posedness in $L^2(\R)$ was proved by Herr et al. in \cite{HerrIonescuKenigKoch2010} adjusting the gauge transform for $1<a<2$. Carrying out this approach brought up substantial technical difficulties due to the strong dependence of the gauge on the frequencies.\\
Notably, in a previous work by Herr \cite{Herr2007} was shown that after weakening the problematic $High \times Low \rightarrow High$-interaction through introducing a low-frequency weight \eqref{eq:DispersionGeneralizedBenjaminOnoRealLine} becomes amenable to Picard iteration for $1 < a <2$ and sharp local well-posedness results were established.\\
A much simpler approach than the one from \cite{HerrIonescuKenigKoch2010} was pointed out recently by Molinet and Vento in \cite{MolinetVento2015}, where local well-posedness for $s \geq 1-a/2$ was proved as well on the real line as on the circle. In \cite{MolinetVento2015} Fourier restriction spaces are utilized in a novel iteration scheme combined with energy arguments to ameliorate the derivative loss. This very accessible approach does not rely on frequency dependent time localization, but on comprehension of the resonance function.\\
In the present work the following result is proved by short time analysis:
\begin{theorem}
\label{thm:WellposednessFractionalBenjaminOno}
For $1<a \leq 3/2$ \eqref{eq:GeneralizedFractionalBenjaminOnoEquation} is locally well-posed in $H^s(\mathbb{T})$ provided that $s>3/2-a$ and for $3/2 < a < 2$ \eqref{eq:GeneralizedFractionalBenjaminOnoEquation} is globally well-posed in $L^2(\mathbb{T})$.
\end{theorem}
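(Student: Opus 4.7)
The plan is to apply the short-time Fourier restriction norm method, as developed by Ionescu--Kenig--Tataru and adapted to this class of equations by Molinet--Vento. I work with frequency-localized Bourgain-type function spaces $F^s$ in which the time interval over which the $X^{s,1/2}$-type norm is taken depends on the output dyadic frequency $N$ as $T_N \sim N^{-\alpha}$ for a suitable $\alpha = \alpha(a) > 0$, together with a matching nonlinearity space $N^s$ and an energy space $E^s$ controlling $L^\infty_t H^s_x$ on the full time window $[0,T]$. The well-posedness argument then reduces to the familiar trio: a linear estimate $\|u\|_{F^s(T)} \lesssim \|u\|_{E^s(T)} + \|(\partial_t + \partial_x D_x^a) u\|_{N^s(T)}$, a short-time bilinear estimate $\|u \partial_x v\|_{N^s(T)} \lesssim \|u\|_{F^s(T)} \|v\|_{F^s(T)}$ valid for $s > 3/2 - a$, and a cubic energy estimate $\|u\|_{E^s(T)}^2 \lesssim \|u_0\|_{H^s}^2 + C\|u\|_{F^s(T)}^3$. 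Together with the parallel estimates for the difference equation (typically at one regularity below, as is standard in the short-time formalism), a continuity-in-$T$ bootstrap yields local existence, uniqueness, and continuous dependence.

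The bilinear estimate reduces to a case analysis over dyadic interactions $P_{N_1} u \cdot \partial_x P_{N_2} v$ projected to output scale $N$. Writing $\Omega(\xi_1,\xi_2) = (\xi_1+\xi_2)|\xi_1+\xi_2|^a - \xi_1|\xi_1|^a - \xi_2|\xi_2|^a$ for the resonance function, one has $|\Omega| \sim N^{a+1}$ in the fully resonant regime $N \sim N_1 \sim N_2$ and $|\Omega| \sim N_{\max}^a N_{\min}$ in the high-high-to-low and high-low-to-high regimes. The latter cases saturate the threshold $s > 3/2-a$: the factor $\partial_x$ produces a derivative loss of order $N_{\max}$ that is only partially recovered by the $X^{s,b}$-component of $F^s$, and the residual deficit is compensated exactly by the short-time localization at scale $N^{-\alpha}$ together with the Sobolev regularity $s > 3/2-a$.

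The main technical obstacle is the energy estimate. Testing the equation with $\langle D \rangle^{2s} u$ and integrating by parts, one isolates a cubic term involving the commutator $[\langle D \rangle^s, u] \partial_x u$ whose symbol calculus exhibits a derivative loss of order one that no $F^s$-norm can absorb when $s$ is near the threshold. To eliminate this loss I introduce a modified energy $\mathcal{E}^s(u) = \|u\|_{H^s}^2 + R_3(u)$ with a symmetric cubic correction $R_3(u) = \sum_{\xi_1+\xi_2+\xi_3=0} m(\xi_1,\xi_2,\xi_3) \hat{u}(\xi_1)\hat{u}(\xi_2)\hat{u}(\xi_3)$, whose multiplier $m$ is engineered so that $\partial_t R_3$ along solutions cancels the non-resonant part of the commutator; the remaining resonant cubic piece is absorbed via the same resonance-function analysis as in the bilinear estimate, while the quartic remainder is acceptable in $F^s$. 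Constructing $m$ requires dividing by $\Omega$, which is where the restriction $s > 3/2 - a$ emerges a second time; matching the two appearances of this threshold is the sharpest step and the crux of the argument.

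For $1 < a \leq 3/2$ the scheme above yields local well-posedness in $H^s(\T)$ for every $s > 3/2 - a$. When $3/2 < a < 2$ the threshold $3/2 - a$ is strictly negative, so $L^2(\T)$ lies above it and the local theory applies there. Combined with the conservation law $\tfrac{d}{dt}\int u^2 \, dx = 0$ valid on smooth solutions and propagated to $L^2$-data via the continuous dependence established above, the local $L^2$-solutions extend globally, yielding global well-posedness in $L^2(\T)$.
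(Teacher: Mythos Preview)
Your proposal is correct and follows essentially the same route as the paper: short-time $X^{s,b}$ spaces with frequency-dependent time localization (here $T_N \sim N^{a-2-\delta}$), the linear/bilinear/energy trio, a modified energy obtained by integrating by parts in time against $\Omega$ to trade the bad cubic term for a boundary term plus a quartic remainder, difference estimates at a lower regularity ($H^{-1/2}$ in the paper), and globalization for $a>3/2$ via $L^2$ conservation. One minor correction: the frequency-dependent short-time framework you use is the Ionescu--Kenig--Tataru / Guo setup, not Molinet--Vento's, whose method the paper explicitly distinguishes as \emph{not} relying on frequency-dependent time localization.
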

\begin{remark}
Molinet pointed out in \cite{Molinet2008} that in the Benjamin-Ono case the periodic data-to-solution mapping is $C^\infty$ on hyperplanes of initial data with fixed mean. From this one might suspect that this is also true in the dispersion generalized case. However, Herr proved in \cite{Herr2008} that \eqref{eq:GeneralizedFractionalBenjaminOnoEquation} can not be solved via Picard iteration for $1 \leq a < 2$ justifying the use of short time analysis.
\end{remark}
The analysis extends and generalizes the short time analysis from \cite{Guo2012DispersionGeneralizedBenjaminOno} on the real line, which is further improved by considering modified energies. By this we mean correction terms for the frequency localized energy corresponding to normal form transformations in the spirit of the $I$-method (cf. \cite{CollianderKeelStaffilaniTakaokaTao2003}), but without symmetrization.\\
The improved symmetrized expression does not yield new information when analyzing differences of solutions because of reduced symmetry, still normal form transformations allow us to improve the energy estimates.\\
An early application of modified energies was given by Kwon in \cite{Kwon2008}, also in the context of derivative nonlinearities. In the context of short time analysis applications were given by Guo-Kwak-Kwon in \cite{GuoKwakKwon2013} and Kwak in \cite{Kwak2016}.\\
On the real line short time analysis for dispersion generalized Benjamin-Ono equations was already carried out in \cite{Guo2012DispersionGeneralizedBenjaminOno} without normal form transformation giving local well-posedness for $s \geq 2-a$, where $1 \leq a \leq 2$.\\
For previous works making use of frequency dependent time localization close to the present context see \cite{IonescuKenigTataru2008} or \cite{Molinet2012,GuoOh2018} in the periodic case.\\
The gain from modified energies is most significant for large dispersion coefficients allowing us to prove well-posedness in $L^2(\mathbb{T})$. Further, it appears as if some of the arguments can be applied in the low-dispersion case $0<a<1$. For these equations on the circle, which are also of physical interest, are currently no well-posedness results beyond the classical energy method available.\\
On the real line there is the recent work by Molinet-Pilod-Vento \cite{MolinetPilodVento2018} refining the analysis from \cite{MolinetVento2015} by normal form transformations. Since this analysis makes use of smoothing effects on the real line, which are not available on the circle, it is not clear how to extend the analysis from \cite{MolinetPilodVento2018} to the circle.\\
The local well-posedness result from Theorem \ref{thm:WellposednessFractionalBenjaminOno} for $1 < a <2$, which is globalized for $a>3/2$ due to conservation of mass on $\mathbb{T}$ is currently the best. Previously, global well-posedness for $s \geq 1-a/2$, where $1<a<2$, proved in \cite{MolinetVento2015} was the best currently available result.\\
The analysis can be transferred to the real line. On the real line, the multilinear estimates relying on linear and bilinear Strichartz estimates would be improved due to dispersive effects (thus, we prefer to analyze the more involved periodic case), however, the introduction of a modified energy would require additional care because the resonance
\begin{equation*}
\Omega(\xi_1,\xi_2,\xi_3) = \xi_1 |\xi_1|^a + \xi_2 |\xi_2|^a + \xi_3 |\xi_3|^a \quad \xi_i \in \mathbb{R}, \; \xi_1+\xi_2+\xi_3 = 0
\end{equation*}
might become arbitrary small in modulus for non-vanishing $\xi_i \in \mathbb{R}$. To avoid this we confine ourselves to initial data with vanishing mean. As this is a conserved quantity, there is no loss of generality in assuming
\begin{equation*}
\int_{\mathbb{T}} u(x) dx = 0
\end{equation*}
Regarding the frequency dependent time localization we will not work in Euclidean windows (cf. \cite{rsc2018BilinearStrichartzEstimates}), but rather base the analysis on the time localization $T=T(N)=N^{a-2}$ interpolating between Euclidean windows in the Benjamin-Ono case and the Fourier restriction norm analysis from \cite{Bourgain1993} for $a=2$, where frequency dependent time localization is no longer required. For the large data theory it turns out to be convenient to consider the slightly shorter times $N^{a-2-\delta}$ giving an additional factor of $T^\theta$ in the nonlinear estimates (cf. Lemma \ref{lem:tradingModulationRegularity}).\\
The following set of estimates will be established for the proof of Theorem \ref{thm:WellposednessFractionalBenjaminOno} for a smooth solution $u$ to \eqref{eq:GeneralizedFractionalBenjaminOnoEquation} with vanishing mean. For $1 < a < 2$, $T \in (0,1]$, $M \in 2^{\mathbb{N}_0}$ and $s^\prime \geq s \geq \max(3/2-a,0)$ there are $\delta(a,s) > 0$, $c(a,s) > 0$, $d(a,s) > 0$ and $\theta(a,s) > 0$ such that
\begin{equation*}
\left\{ \begin{array}{cl}
\Vert u \Vert_{F^{s^\prime,\delta}_a(T)} &\lesssim \Vert u \Vert_{E^{s^\prime}(T)} + \Vert u \partial_x u \Vert_{N^{s^\prime,\delta}_a(T)} \\
\Vert u \partial_x u \Vert_{N^{s^\prime,\delta}_a(T)} &\lesssim T^\theta \Vert u \Vert_{F^{s^\prime,\delta}_a(T)} \Vert u \Vert_{F^{s,\delta}_a(T)} \\
\Vert u \Vert^2_{E^{s^\prime}(T)} &\lesssim \Vert u(0) \Vert^2_{H^{s^\prime}(T)} + M^{c} T \Vert u \Vert^2_{F^{s^\prime,\delta}_a(T)} \Vert u \Vert_{F^{s,\delta}_a(T)} \\
&+ M^{-d} \Vert u \Vert_{F^{s^\prime,\delta}_a(T)}^2 \Vert u \Vert_{F^{s,\delta}_a(T)} + T^\theta \Vert u \Vert^2_{F^{s^\prime,\delta}_a(T)} \Vert u \Vert^2_{F^{s,\delta}_a(T)}
\end{array} \right.
\end{equation*}
By the usual bootstrap arguments (cf. \cite{IonescuKenigTataru2008,GuoOh2018}) the above display gives a priori estimates.\\
For differences of solutions $v=u_1-u_2$, where $u_i$ denote smooth solutions to \eqref{eq:GeneralizedFractionalBenjaminOnoEquation} with vanishing mean, we have the following set of estimates for $s > 3/2 -a$ in case $1<a \leq 3/2$ and $s=0$ in case $3/2 < a <2$ and the remaining parameters like in the previous display:
\begin{equation*}
\left\{\begin{array}{cl}
\Vert v \Vert_{F^{-1/2,\delta}_a(T)} &\lesssim \Vert v \Vert_{E^{-1/2}(T)} + \Vert \partial_x (v (u_1+u_2)) \Vert_{N^{-1/2,\delta}_a(T)} \\
\Vert \partial_x((u_1+u_2) v) \Vert_{N_a^{-1/2,\delta}(T)} &\lesssim T^\theta \Vert v \Vert_{F^{-1/2,\delta}_a(T)} ( \Vert u_1 \Vert_{F^{s,\delta}_a(T)} + \Vert u_2 \Vert_{F^{s,\delta}_a(T)} ) \\
\Vert v \Vert^2_{E^{-1/2}(T)} &\lesssim \Vert v(0) \Vert^2_{H^{-1/2}} \\
&+ M^{c} T \Vert v \Vert^2_{F^{-1/2,\delta}_a(T)} ( \Vert u_1 \Vert_{F^{s,\delta}_a(T)} + \Vert u_2 \Vert_{F^{s,\delta}_a(T)}) \\
&+ M^{-d} \Vert v \Vert_{F^{-1/2,\delta}_a(T)}^2 ( \Vert u_1 \Vert_{F^{s,\delta}_a(T)} + \Vert u_2 \Vert_{F^{s,\delta}_a(T)}) \\
&+ T^\theta \Vert v \Vert^2_{F^{-1/2,\delta}_a(T)} ( \Vert u_1 \Vert^2_{F^{s,\delta}_a(T)} + \Vert u_2 \Vert_{F^{s,\delta}_a(T)}^2 )
\end{array} \right.
\end{equation*}
which yields Lipschitz-continuity in $H^{-1/2}$ for initial data in $H^s$.\\
The related set of estimates with parameters like in the previous display
\begin{equation*}
\left\{\begin{array}{cl}
\Vert v \Vert_{F_a^{s,\delta}(T)} &\lesssim \Vert v \Vert_{E^s(T)} + \Vert \partial_x (v(u_1 + u_2)) \Vert_{N_a^{s,\delta}(T)} \\
\Vert \partial_x (v(u_1 + u_2)) \Vert_{N_a^{s,\delta}(T)} &\lesssim T^\theta \Vert v \Vert_{F_a^{s,\delta}(T)} \left( \Vert u_1 \Vert_{F_a^{s,\delta}(T)} + \Vert u_2 \Vert_{F_a^{s,\delta}(T)} \right) \\
\Vert v \Vert^2_{E^{s}(T)} &\lesssim \Vert v(0) \Vert^2_{H^s} \\
&+ M^c T \Vert v \Vert^2_{F^{s,\delta}_a(T)} (\Vert u_2 \Vert_{F^{s,\delta}_a(T)} + \Vert v \Vert_{F_a^{s,\delta}(T)}) \\
&+ M^{-d} \Vert v \Vert^2_{F_a^{s,\delta}(T)} ( \Vert u_2 \Vert_{F_a^{s,\delta}(T)} + \Vert v \Vert_{F^{s,\delta}_a(T)}) \\
&+ T^\theta (\Vert v \Vert^2_{F_a^{s,\delta}(T)} ( \Vert u_2 \Vert^2_{F_a^{s,\delta}(T)} + \Vert v \Vert_{F_a^{s,\delta}(T)}^2 ) \\
&\quad + \Vert v \Vert_{F_a^{-1/2,\delta}(T)} \Vert v \Vert_{F_a^{s,\delta}(T)} \Vert u_2 \Vert_{F_a^{r,\delta}(T)} \Vert u_2 \Vert_{F_a^{s,\delta}(T)} ),
\end{array} \right.
\end{equation*}
where $r = (2-a)+s$, yields continuous dependence by a variant of the Bona-Smith approximation (cf. \cite{IonescuKenigTataru2008,GuoOh2018,BonaSmith1975}). The conclusion of Theorem \ref{thm:WellposednessFractionalBenjaminOno} from the above set of estimates is standard and thus omitted.\\
The article is structured as follows: In Section \ref{section:Notation} notation and function spaces are introduced, in Section \ref{section:LinearBilinearEstimates} linear and bilinear estimates for frequency localized functions are discussed. These estimates are applied in Section \ref{section:ShorttimeBilinearEstimates} to derive an estimate for the nonlinearity in short time function spaces and in Section \ref{section:EnergyEstimates} the energy norm is propagated in short time function spaces.
\section{Notation and Function spaces}
\label{section:Notation}
Purpose of this section is to fix notation and introduce function spaces. For the proofs of the basic function space properties, which hold true independent of the domain and dispersion relation, we will refer to the literature.\\
The Fourier transform of a $2 \pi$-periodic $L^1$-function $f: \mathbb{T} \rightarrow \mathbb{C}$ takes on values in $\mathbb{Z}$ and is defined by
\begin{equation}
\label{eq:FourierTransformPeriodicFunction}
\hat{f}(\xi) = \int_{ \mathbb{T} } f(x) e^{- ix \xi} dx \quad (\xi \in \mathbb{Z})
\end{equation}
Below we will occasionally write $ (d\xi)_1$ for the counting measure on $\Z$ to emphasize similarity to the real line case
\begin{equation}
\label{eq:normalizedCountingMeasure}
\int a(\xi) (d\xi)_1 := \sum_{\xi \in \Z} a(\xi)
\end{equation}
$d \Gamma_n(\xi_1,\ldots,\xi_n)$ denotes the measure on the hypersurface $\Gamma_n = \{(\xi_1,\ldots,\xi_n) \in \mathbb{Z}^n \; | \; \xi_1 + \ldots + \xi_n = 0 \}$:
\begin{equation*}
\int_{\Gamma_n} f(\xi_1,\ldots,\xi_{n}) d\Gamma_n(\xi_1,\ldots,\xi_n) := \sum_{\xi_1,\ldots,\xi_{n-1} \in \mathbb{Z}} f(\xi_1,\ldots,\xi_{n-1},-\xi_1-\ldots-\xi_{n-1})
\end{equation*}
The Fourier inversion formula is given by
\begin{equation}
\label{eq:conventionFourierInversion}
f(x) = \frac{1}{2 \pi} \int \hat{f}(\xi) e^{ix \xi} (d\xi)_{1}
\end{equation}
We define the Sobolev space $H^s(\T)$ ($H^s$ for brevity) with norm
\begin{equation}
\label{eq:definitionSobolevSpace}
\Vert f \Vert_{H^s(\T)} = \Vert \hat{f}(\xi) \langle \xi \rangle^{s} \Vert_{L^2_{(d\xi)_1}}
\end{equation}
and $H^\infty(\T) = \bigcap_s H^s(\T)$.\\
For a $2 \pi$-space-periodic function $f(x,t)$ with time variable $t \in \R$, we define the space-time Fourier transform
\begin{equation}
\label{eq:PeriodicSpaceTimeFourierTransform}
\tilde{v}(\tau,\xi) = (\mathcal{F}_{t,x} v)(\tau,\xi) = \int_{\R} dt \int_{\T} dx e^{-ix \xi} e^{-it \tau} v(t,x) \quad (\xi \in \Z, \; \tau \in \R)
\end{equation}
The periodic space-time Fourier transform is inverted by
\begin{equation}
\label{eq:InversionPeriodicSpaceTimeFourierTransform}
v(t,x) = \frac{1}{(2 \pi)^{2}} \int \int e^{i x \xi} e^{i t \tau} \tilde{v}(\tau,\xi) (d\xi)_1 d\tau
\end{equation}
We define Littlewood-Paley projectors on the circle: Let $\rho: \mathbb{R} \rightarrow \R_{\geq 0}$ be a smooth and radially decreasing function with
\begin{equation*}
\rho(\xi) \equiv 1, \quad |\xi| \leq 1 \text{ and supp} \; \rho \subseteq B(0,2)
\end{equation*}
For $k \in \mathbb{N}$ define
\begin{equation*}
\chi_k(\xi) = \rho(2^{-k} \xi) - \rho(2^{1-k} \xi), \quad \quad \text{supp} \chi_k \subseteq B(0,2^{k+1}) \backslash B(0,2^{k-1})
\end{equation*}
and for $f \in L^2(\mathbb{T})$ the $k$th Littlewood-Paley projector is defined by
\begin{equation*}
(P_k f) \widehat (\xi) = \chi_k(\xi) \hat{f}(\xi).
\end{equation*}
The zero-frequency will be considered separately:
\begin{equation}
\label{eq:LowFrequencyProjector}
P_{0} f \widehat (\xi) = 1_{(-1/2,1/2)}(\xi) \hat{f}(\xi)
\end{equation}
The definition of the function spaces requires a partition in the modulation which we will denote differently from the partition of the spatial frequencies.\\
Let $\eta_0: \R \rightarrow [0,1]$ denote an even, smooth function $\text{supp} \, (\eta_0) \subseteq [-5/4,5/4]$. For $k \in \mathbb{N}$ we set
\begin{equation*}
\eta_k(\tau) = \eta_0(\tau/2^k) - \eta_0(\tau/2^{k-1}) \quad \quad 
\end{equation*}
We write $\eta_{\leq m} = \sum_{j=0}^m \eta_j$ for $m \in \mathbb{N}$.\\
The dispersion relation will be denoted by
\begin{equation*}
\varphi_a(\xi) = \xi | \xi|^a
\end{equation*}
The regions in Fourier space localized at frequency and modulation are denoted by
\begin{equation*}
D^a_{k_i,j_i} = \{ (\xi, \tau) \in \mathbb{Z} \times \mathbb{R} \, | \, |\xi| \sim 2^{k_i}, \; | \tau - \varphi_a(\xi)| \sim 2^{j_i} \}
\end{equation*}
with the obvious modification for the variant $D^a_{k_i,\leq j_i}$.\\
Next, we define an $X^{s,b}$-type space for the Fourier transform of frequency-localized periodic functions:
\begin{equation*}
\begin{split}
\label{eq:XkDefinition}
&X_{a,k} = \{ f:  \mathbb{R} \times \mathbb{Z} \rightarrow \mathbb{C} \; | \\
 &\mathrm{supp}(f) \subseteq \R \times {I_k}, \Vert f \Vert_{X_{a,k}} = \sum_{j=0}^\infty 2^{j/2} \Vert \eta_j(\tau - \varphi_a(\xi)) f(\tau,\xi) \Vert_{\ell^2_{\xi}  L^2_{\tau}} < \infty \} .
\end{split}
\end{equation*}
Partitioning the modulation variable through a sum over $\eta_j$ yields the estimate
\begin{equation}
\label{eq:XkEstimateI}
\Vert \int_{\mathbb{R}} | f_k(\tau^\prime,\xi) | d\tau^\prime \Vert_{\ell^2_{\xi}} \lesssim \Vert f_k \Vert_{X_{a,k}}. 
\end{equation}
Also, we record the estimate
\begin{equation}
\label{eq:XkEstimateII}
\begin{split}
&\sum_{j=l+1}^{\infty} 2^{j/2} \Vert \eta_j(\tau - \varphi_a(\xi)) \cdot \int_{\mathbb{R}} | f_k(\tau^\prime,\xi) | \cdot 2^{-l} (1+ 2^{-l}|\tau - \tau^\prime |)^{-4} d\tau^\prime \Vert_{L^2_{(d\xi)_1} L^2_\tau} \\
&+ 2^{l/2} \Vert \eta_{\leq l} (\tau - \varphi_a(\xi)) \cdot \int_{\mathbb{R}} | f_k(\tau^\prime,\xi) | \cdot 2^{-l} (1+ 2^{-l}|\tau - \tau^\prime |)^{-4} d\tau^\prime \Vert_{L^2_{(d\xi)_1} L^2_\tau }\\
&\lesssim \Vert f_k \Vert_{X_{a,k}}
\end{split}
\end{equation}
which is an instance of \cite[Equation~(3.5)]{GuoOh2018}.\\
In particular, we find for a Schwartz-function $\gamma$ for $k, l \in \mathbb{N}, t_0 \in \mathbb{R}, f_k \in X_{a,k}$ the estimate
\begin{equation}
\label{eq:XkEstimateIII}
\Vert \mathcal{F}[\gamma(2^l(t-t_0)) \cdot \mathcal{F}^{-1}(f_k)] \Vert_{X_{a,k}} \lesssim_{\gamma} \Vert f_k \Vert_{X_{a,k}}
\end{equation}
We define a dyadically localized energy space
\begin{equation*}
E_k = \{ f \in L^2 | P_k f = f \}
\end{equation*}
and set
\begin{equation*}
C_0(\R,E_k) = \{ u_k \in C(\R,E_k) \, | \, \text{supp}(u_k) \subseteq [-4,4] \times \R \}
\end{equation*}
We define the short time $X^{s,b}$-space $F_{a,k}$ for a frequency $2^k$. For $1 \leq a \leq 2$ we localize time on a scale of $2^{(a-2-\delta)k}$, where $\delta \geq 0$:
\begin{equation}
F^{\delta}_{a,k} = \{ u_k \in C_0(\R,E_k) | \Vert u_k \Vert_{F^{\delta}_{a,k}} = \sup_{t_k \in \R} \Vert \mathcal{F}[u_k \eta_0(2^{(2-a+\delta)k} (t-t_k))] \Vert_{X_{a,k}} < \infty \}
\end{equation}
Based on the observation that for $a=1$ $T=T(N)=N^{-1}$ is a natural localization in time (cf. \cite{rsc2018BilinearStrichartzEstimates}) and that for $a=2$ we do not need localization in time anymore to overcome the derivative loss due to sufficient dispersive effects we choose as inbetween localization in time $T=T(N)=N^{a-2-\delta}$. It turns out that for some limiting cases $\delta>0$ will be useful.\\
Correspondingly, we define the space in which the nonlinearity will be estimated as
\begin{equation*}
\begin{split}
&N^{\delta}_{a,k} = \{ u_k \in C_0(\R,E_k) | \\
&\Vert u_k \Vert_{N^{\delta}_{k,a}} = \sup_{t_k \in \R} \Vert (\tau - \omega(\xi) + i2^{(2-a+\delta)k})^{-1} \mathcal{F}[u_k \eta(2^{(2-a+\delta)k}(t-t_k))] \Vert_{X_{k,a}} < \infty \}
\end{split}
\end{equation*}
We localize the spaces in time for $T \in (0,1]$ as usual:
\begin{equation*}
F^{\delta}_{a,k}(T) = \{ u_k \in C([-T,T],E_k) | \Vert u_k \Vert_{F^{\delta}_{a,k}(T)} = \inf_{\tilde{u}_k = u_k \text{ in } [-T,T]} \Vert \tilde{u}_k \Vert_{F^{\delta}_{a,k}} < \infty \}
\end{equation*}
and
\begin{equation*}
N^{\delta}_{a,k}(T) = \{ u_k \in C([-T,T],E_k) | \Vert u_k \Vert_{N^{\delta}_{a,k}(T)} = \inf_{\tilde{u}_k = u_k \text{ in } [-T,T]} \Vert \tilde{u}_k \Vert_{N^{\delta}_{a,k}} < \infty \}
\end{equation*}
The spaces $E^s$, $E^s(T)$, $F_a^{s,\delta}(T)$ and $N_a^{s,\delta}(T)$ are composed via Littlewood-Paley decomposition:
\begin{equation*}
E^s = \{ f: \R \rightarrow \C | \Vert \phi \Vert^2_{E^s} = \sum_{k \geq 0} 2^{2ks} \Vert P_k f \Vert^2_{L^2} < \infty \}
\end{equation*}
and for the solution we define
\begin{equation*}
\begin{split}
E^s(T) = \{ u \in C([-T,T],H^\infty) | \Vert u \Vert^2_{E^s(T)} &= \Vert P_0 u(0) \Vert_{L^2}^2 \\
&+ \sum_{k \geq 1} \sup_{t_k \in [-T,T]} 2^{2ks} \Vert P_k u(t_k) \Vert^2_{L^2} < \infty \}.
\end{split}
\end{equation*}
We define the short time $X^{s,b}$-space for the solution
\begin{equation*}
F^{s,\delta}_a(T) = \{ u \in C([-T,T],H^\infty) | \Vert u \Vert^2_{F_a^{s,\delta}(T)} = \sum_{k \geq 0} 2^{2ks} \Vert P_k u \Vert^2_{F^{\delta}_{k,a}(T)} < \infty \},
\end{equation*}
and for the nonlinearity we define
\begin{equation*}
N^{s,\delta}_a(T) = \{ u \in C([-T,T],H^\infty) | \Vert u \Vert_{N_a^{s,\delta}(T)} = \sum_{k \geq 0} 2^{2ks} \Vert P_k u \Vert_{F^{\delta}_{a,k}(T)} < \infty \}.
\end{equation*}
We will also make use of $k$-acceptable time multiplication factors (cf. \cite{IonescuKenigTataru2008}): For $k \in \mathbb{N}_0$ we set
\begin{equation*}
S^{\delta}_{a,k} = \{ m_k \in C^{\infty}(\mathbb{R},\mathbb{R}) : \; \Vert m_k \Vert_{S_k} = \sum_{j=0}^{10} 2^{-j (2-a+\delta) k} \Vert \partial^j m_k \Vert_{L^{\infty}} < \infty \}.
\end{equation*}
The generic example is given by time localization on a scale of $2^{-(2-a+\delta) k}$, i.e., $\eta_0(2^{(2-a+\delta) k} \cdot)$.\\ 
The estimates (cf. \cite[Eq.~(2.21),~p.~273]{IonescuKenigTataru2008})
\begin{equation}
\label{eq:timeLocalizationShorttimeNorms}
\begin{split}
\left\{\begin{array}{cl}
\Vert \sum_{k \geq 0} m_k(t) P_k(u) \Vert_{F_{a,k}^{\delta}(T)} \lesssim (\sup_{k \geq 0} \Vert m_k \Vert_{S^{\delta}_{a,k}}) \cdot \Vert u \Vert_{F_{a,k}^{\delta}(T)}, \\
\Vert \sum_{k \geq 0} m_k(t) P_k(u) \Vert_{N_{a,k}^{\delta}(T)} \lesssim (\sup_{k \geq 0} \Vert m_k \Vert_{S^{\delta}_{a,k}}) \cdot \Vert u \Vert_{N_{a,k}^{\delta}(T)},
\end{array} \right.
\end{split}
\end{equation}
follow from integration by parts. From \eqref{eq:timeLocalizationShorttimeNorms} follows that we can assume $F^{\delta}_{a,k}(T)$ functions to be supported in time on an interval $[-T-2^{-\alpha k-10},T+2^{-\alpha k-10}]$.\\
We record basic properties of the shorttime $X^{s,b}$-spaces introduced above. The next lemma establishes the embedding $F_{a}^{s,\delta}(T) \hookrightarrow C([0,T],H^s)$.
\begin{lemma}
\label{lem:FsEmbedding}
\begin{enumerate}
\item[(i)]
We find the estimate
\begin{equation*}
\Vert u \Vert_{L_t^\infty L_x^2} \lesssim \Vert u \Vert_{F^{\delta}_{a,k}}
\end{equation*}
to hold for any $u \in F^{\delta}_{a,k}$.
\item[(ii)]
Suppose that $s \in \R$, $T>0$ and $u \in F_{a}^{s,\delta}(T)$. Then, we find the estimate
\begin{equation*}
\Vert u \Vert_{C([0,T],H^s)} \lesssim \Vert u \Vert_{F_{a}^{s,\delta}(T)}
\end{equation*}
to hold.
\end{enumerate}
\end{lemma}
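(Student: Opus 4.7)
The plan is to reduce everything to the basic time-localized estimate \eqref{eq:XkEstimateI} and then, for (ii), assemble dyadic pieces via Littlewood-Paley.

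For (i), I would use the freedom to choose the center $t_k$ in the supremum defining $\Vert u_k \Vert_{F^{\delta}_{a,k}}$: for any fixed $t_\ast \in \R$, set $t_k = t_\ast$ and let $v(t,x) := u_k(t,x)\,\eta_0(2^{(2-a+\delta)k}(t-t_\ast))$. Since $\eta_0(0) = 1$, we have $v(t_\ast,\cdot) = u_k(t_\ast,\cdot)$. Inverting the time Fourier transform gives $\hat{u}_k(t_\ast,\xi) = (2\pi)^{-1}\int_{\R} \tilde{v}(\tau,\xi) e^{it_\ast \tau}\, d\tau$, hence Parseval on $\T$ together with \eqref{eq:XkEstimateI} yields
\begin{equation*}
\Vert u_k(t_\ast) \Vert_{L^2_x} \lesssim \Bigl\Vert \int_{\R} |\tilde{v}(\tau,\xi)|\, d\tau \Bigr\Vert_{\ell^2_\xi} \lesssim \Vert \tilde{v} \Vert_{X_{a,k}} \leq \Vert u_k \Vert_{F^{\delta}_{a,k}}.
\end{equation*}
Taking the supremum over $t_\ast \in \R$ gives (i); the $L^\infty_t L^2_x$ interpretation is automatic from $u_k \in C_0(\R, E_k)$.

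For (ii), Littlewood-Paley decompose $u = \sum_{k \geq 0} P_k u$ and, for each $k$, pass to a near-optimal extension $\tilde{u}_k \in F^{\delta}_{a,k}$ of $P_k u$ from $[-T,T]$ to $\R$ satisfying $\Vert \tilde{u}_k \Vert_{F^{\delta}_{a,k}} \leq 2 \Vert P_k u \Vert_{F^{\delta}_{a,k}(T)}$. Applying (i) to each $\tilde{u}_k$ yields $\Vert P_k u(t) \Vert_{L^2} \lesssim \Vert P_k u \Vert_{F^{\delta}_{a,k}(T)}$ uniformly for $t \in [-T,T]$. Squaring, weighting by $2^{2ks}$, and summing over $k$ gives
\begin{equation*}
\Vert u(t) \Vert_{H^s}^2 = \sum_{k \geq 0} 2^{2ks} \Vert P_k u(t) \Vert_{L^2}^2 \lesssim \sum_{k \geq 0} 2^{2ks} \Vert P_k u \Vert_{F^{\delta}_{a,k}(T)}^2 = \Vert u \Vert_{F^{s,\delta}_a(T)}^2
\end{equation*}
for each $t \in [-T,T]$, so the supremum over $t$ yields the claimed norm bound. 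Continuity $t \mapsto u(t) \in H^s$ on $[0,T]$ is inherited from the standing assumption $u \in C([-T,T], H^\infty)$ built into the definition of $F^{s,\delta}_a(T)$.

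There is essentially no real obstacle: the only conceptual step is the observation that aligning the cutoff center $t_k$ with the evaluation time $t_\ast$ converts the $L^2_\tau$-based short-time norm $X_{a,k}$ into a pointwise-in-$t$ estimate on $\Vert u_k(t) \Vert_{L^2_x}$, precisely because $\eta_0(0)=1$; everything else reduces to Fourier inversion, Parseval, and dyadic summation via $\sup_t \sum_k \leq \sum_k \sup_t$.
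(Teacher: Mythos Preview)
Your proof is correct and is exactly the standard argument underlying the references the paper cites (it does not reproduce a proof but defers to \cite[Lemma~3.1]{IonescuKenigTataru2008} and \cite[Lemmas~3.2,~3.3]{GuoOh2018}). The key mechanism---aligning the cutoff center with the evaluation time so that $\eta_0(0)=1$ lets you recover $u_k(t_\ast,\cdot)$ from the localized function, then invoking \eqref{eq:XkEstimateI} and summing dyadically---is precisely how those referenced proofs proceed.
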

\begin{proof}
For a proof see \cite[Lemma~3.1.,~p.~274]{IonescuKenigTataru2008} in Euclidean space and\\
\cite[Lemma~3.2,~3.3]{GuoOh2018} in the periodic case.
\end{proof}
We state the energy estimate for the above short time $X^{s,b}$-spaces. The proof which was carried out on the real line in \cite[Proposition~3.2.,~p.~274]{IonescuKenigTataru2008} and in the periodic case in \cite[Proposition~4.1.]{GuoOh2018} is omitted.
\begin{proposition}
\label{prop:linearShorttimeEnergyEstimate}
Let $T \in (0,1]$, $1<a<2$ and $u, v \in C([-T,T],H^{\infty})$ satisfy the equation
\begin{equation*}
\partial_t u + \varphi_a(\nabla/i) u =v \; \mathrm{ in } \; \mathbb{T} \times (-T,T).
\end{equation*}
Then, we find the following estimate to hold for any $s \in \mathbb{R}$:
\begin{equation*}
\Vert u \Vert_{F_{a}^{s,\delta}(T)} \lesssim \Vert u \Vert_{E^{s,\delta}(T)} + \Vert v \Vert_{N_{a}^{s,\delta}(T)}
\end{equation*}
\end{proposition}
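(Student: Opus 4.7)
The dyadic decomposition used to compose $F_a^{s,\delta}(T)$, $E^s(T)$ and $N_a^{s,\delta}(T)$ reduces the claim, after weighting with $2^{2ks}$, to a single frequency-localized estimate
\begin{equation*}
\|P_k u\|_{F_{a,k}^\delta(T)} \;\lesssim\; \sup_{t_k \in [-T,T]} \|P_k u(t_k)\|_{L^2} \;+\; \|P_k v\|_{N_{a,k}^\delta(T)},
\end{equation*}
uniformly in $k \geq 0$. Since $P_k$ commutes with $\varphi_a(\nabla/i)$, the pair $(P_k u, P_k v)$ satisfies the same evolution equation, and the $P_0$ mode is trivial since the propagator acts as the identity there.

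Fix $k \geq 1$, $t_k \in [-T,T]$, and set $l := (2-a+\delta)k$. Writing Duhamel's formula starting from $t_k$,
\begin{equation*}
P_k u(t) = e^{-(t-t_k)\varphi_a(\nabla/i)} P_k u(t_k) + \int_{t_k}^{t} e^{-(t-s)\varphi_a(\nabla/i)} P_k v(s)\, ds,
\end{equation*}
I multiply by the sharp-scale cutoff $\eta_0(2^l(t-t_k))$ required by the definition of $F_{a,k}^\delta$, and estimate the two contributions separately in $X_{a,k}$. For the homogeneous piece, the space-time Fourier transform concentrates within $O(2^l)$ of the characteristic surface $\tau = \varphi_a(\xi)$, and summing $\sum_j 2^{j/2}$ against the rapid decay of $\widehat{\eta_0}$ at scale $2^l$ produces an $X_{a,k}$-bound by $\|P_k u(t_k)\|_{L^2}$.

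For the inhomogeneous piece, I select an extension $w_k$ of $P_k v$ from $[-T,T]$ to all of $\R$ realizing the infimum defining $\|P_k v\|_{N_{a,k}^\delta(T)}$ up to a factor of $2$, and replace $\int_{t_k}^t$ by $\int_{-\infty}^t$ (the difference being a free evolution in $\xi$, absorbed into the homogeneous term after a further time-cutoff). The standard Fourier identity
\begin{equation*}
\mathcal{F}_{t,x}\!\left[\eta_0(2^l(t-t_k)) \int_{-\infty}^t e^{-(t-s)\varphi_a(\nabla/i)} w_k(s)\, ds\right](\tau,\xi) \;\sim\; \int \frac{\widetilde{w_k}(\tau',\xi)\, 2^{-l}\,\widehat{\eta_0}\!\bigl(2^{-l}(\tau-\tau')\bigr)}{\tau' - \varphi_a(\xi) + i\, 2^l}\, d\tau'
\end{equation*}
produces precisely the weight $(\tau' - \varphi_a(\xi) + i 2^l)^{-1}$ defining $N_{a,k}^\delta$, so that \eqref{eq:XkEstimateII} and \eqref{eq:XkEstimateIII} bound the right-hand side in $X_{a,k}$ by the $N_{a,k}^\delta$-norm of $w_k$. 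Taking the infimum over extensions $w_k$, then the supremum in $t_k$, and finally summing dyadically with weight $2^{2ks}$ finishes the estimate.

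The main technical obstacle lies in the bookkeeping that converts $\int_{t_k}^t$ into the full-line convolution on which the Fourier identity above relies: the characteristic factor $1_{[t_k,t]}(s)$ has to be traded for a smooth temporal partition whose building blocks are $k$-acceptable in the sense of $S_{a,k}^\delta$, so that \eqref{eq:timeLocalizationShorttimeNorms} absorbs the resulting auxiliary multipliers without spectral loss. This is precisely the passage carried out in detail in \cite{IonescuKenigTataru2008} on $\R$ and in \cite{GuoOh2018} on $\T$, to which the author defers.
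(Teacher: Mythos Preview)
Your proposal is correct and follows precisely the standard Duhamel-based argument from \cite[Proposition~3.2]{IonescuKenigTataru2008} and \cite[Proposition~4.1]{GuoOh2018} to which the paper defers; since the paper omits the proof entirely and simply cites these references, your sketch is in fact more detailed than what appears in the paper and matches the cited approach.
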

For the large data theory we have to define the following generalizations in terms of regularity in the modulation variable to the $X_{a,k}$-spaces:
\begin{equation*}
\begin{split}
&X^b_{a,k} = \{ f: \mathbb{R} \times \mathbb{Z} \rightarrow \mathbb{C} \; | \\
 &\mathrm{supp}(f) \subseteq \mathbb{R} \times {I_k}, \Vert f \Vert_{X^b_{a,k}} = \sum_{j=0}^\infty 2^{bj} \Vert \eta_j(\tau - \varphi_a(\xi)) f(\tau,\xi) \Vert_{\ell^2_{\xi} L^2_{\tau}} < \infty \},
\end{split}
\end{equation*}
where $b \in \R$. The short time spaces $F^{b,\delta}_{a,k}$, $F^{b,s,\delta}_{a}(T)$ and $N^{b,\delta}_{a,k}$, $N^{b,s,\delta}_{a}(T)$ are defined following along the above lines with $X_{a,k}$ replaced by $X^b_{a,k}$.\\
Indeed, in a similar spirit to the treatment of $X^{s,b}_T$-spaces we can trade regularity in the modulation variable for a small power of $T$:
\begin{lemma}{\cite[Lemma~3.4]{GuoOh2018}}
\label{lem:tradingModulationRegularity}
Let $T>0$, $1<a<2$, $\delta  \geq 0$ and $b<1/2$. Then, we find the following estimate to hold:
\begin{equation*}
\Vert P_k u \Vert_{F_{a,k}^{b,\delta}} \lesssim T^{(1/2-b)-} \Vert P_k u \Vert_{F_{a,k}^\delta}
\end{equation*}
for any function $u$ with temporal support in $[-T,T]$.
\end{lemma}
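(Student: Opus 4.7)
The plan is to reduce the short-time inequality to a pointwise (in the translation parameter $t_k$) trading statement: for any function $w$ whose temporal support lies in an interval of length at most $2T$, I want to show
\[
\Vert \mathcal{F}w \Vert_{X_{a,k}^{b}} \lesssim T^{(1/2-b)-} \Vert \mathcal{F}w \Vert_{X_{a,k}}.
\]
Once this is established I fix any $t_k \in \R$ and apply it to $w = P_k u \cdot \eta_0(2^{(2-a+\delta)k}(t-t_k))$. Since $u$ is temporally supported in $[-T,T]$ and the cutoff restricts to $|t-t_k| \lesssim 2^{-(2-a+\delta)k}$, the temporal support of $w$ has length at most $2T$. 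Taking the supremum over $t_k$ on both sides of the resulting pointwise inequality recovers the claimed bound for $P_ku$ in $F_{a,k}^{b,\delta}$ versus $F_{a,k}^\delta$.

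For the pointwise bound, write $a_j := \Vert \eta_j(\tau-\varphi_a(\xi))\,\mathcal{F}w \Vert_{\ell^2_\xi L^2_\tau}$, so that $\Vert \mathcal{F}w\Vert_{X_{a,k}^{b}} = \sum_{j\geq 0} 2^{bj}a_j$ and $\Vert \mathcal{F}w\Vert_{X_{a,k}} = \sum_{j\geq 0} 2^{j/2} a_j$. Split the $j$-sum at a threshold $j_0 \in \mathbb{N}_0$ with $2^{j_0}\sim T^{-1}$. For the high modulations $j\geq j_0$, since $b-1/2<0$,
\[
\sum_{j\geq j_0} 2^{bj} a_j = \sum_{j\geq j_0} 2^{(b-1/2)j}\,2^{j/2} a_j \leq 2^{(b-1/2)j_0}\sum_{j\geq 0} 2^{j/2}a_j \lesssim T^{1/2-b}\,\Vert \mathcal{F}w\Vert_{X_{a,k}}.
\]

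For the low modulations $j<j_0$, use the trivial bound $a_j \leq \Vert w\Vert_{L^2_{t,x}}$ together with the temporal support information,
\[
\Vert w\Vert_{L^2_{t,x}} \lesssim T^{1/2}\Vert w\Vert_{L^\infty_t L^2_x} \lesssim T^{1/2}\,\Vert \mathcal{F}w\Vert_{X_{a,k}},
\]
where the second inequality is the standard embedding $\Vert w\Vert_{L^\infty_t L^2_x}\lesssim \Vert \mathcal{F}w\Vert_{X_{a,k}}$ that underlies the proof of Lemma~\ref{lem:FsEmbedding}(i). Summing,
\[
\sum_{j<j_0} 2^{bj}a_j \lesssim j_0\, 2^{bj_0}\, T^{1/2}\,\Vert \mathcal{F}w\Vert_{X_{a,k}} \lesssim T^{1/2-b}\,\log(2/T)\,\Vert \mathcal{F}w\Vert_{X_{a,k}},
\]
and the logarithm is absorbed into the arbitrarily small slack built into $T^{(1/2-b)-}$.

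The only conceptual step is the low-modulation bound: the smallness $T^{1/2}$ does not come from the weight $2^{bj}$ versus $2^{j/2}$ (which is actually unfavourable when $j < j_0$), but rather from the temporal support of $w$, accessed through the embedding $X_{a,k}\hookrightarrow L^\infty_tL^2_x$. Everything else is elementary dyadic bookkeeping, and the only point one must verify carefully is that the short-time cutoff $\eta_0(2^{(2-a+\delta)k}(t-t_k))$ does not enlarge the effective temporal support beyond $[-T,T]$.
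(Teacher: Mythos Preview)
The paper does not supply its own proof of this lemma; it is quoted directly from \cite[Lemma~3.4]{GuoOh2018}. Your argument is exactly the standard one behind that reference: split the modulation sum at $2^{j_0}\sim T^{-1}$, treat $j\geq j_0$ by the weight gap $2^{(b-1/2)j}$, and treat $j<j_0$ using the temporal support of $w$ together with the embedding \eqref{eq:XkEstimateI} (which gives $\Vert w\Vert_{L^\infty_t L^2_x}\lesssim \Vert\mathcal{F}w\Vert_{X_{a,k}}$). The reduction to a fixed $t_k$ is clean because multiplying by $\eta_0(2^{(2-a+\delta)k}(t-t_k))$ cannot enlarge the temporal support of $P_k u$ beyond $[-T,T]$.

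One small remark: the inequality $\sum_{j<j_0}2^{bj}\leq j_0\,2^{bj_0}$ that you use for the low modulations holds only when $b\geq 0$. For $b<0$ the sum is $O(1)$ and your argument then yields merely $T^{1/2}$ rather than $T^{(1/2-b)-}$. This is harmless here, since everywhere in the paper the lemma is invoked with $b$ just below $1/2$ (to extract a small factor $T^\theta$), but if you want the full range $b<1/2$ as stated you would need a sharper bound on the individual $a_j$ than $a_j\leq\Vert w\Vert_{L^2_{t,x}}$.
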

Below we will have to consider the action of sharp time cutoffs in the $X_k$-spaces. Recall from the usual $X^{s,b}$-space-theory that multiplication with a sharp cutoff in time is not bounded. However, we find the following estimate to hold:
\begin{lemma}{\cite[Lemma~3.5]{GuoOh2018}}
\label{lem:sharpTimeCutoffAlmostBounded}
Let $k \in \Z$. Then, for any interval $I=[t_1,t_2] \subseteq \R$, we find the following estimate to hold:
\begin{equation*}
\sup_{j \geq 0} 2^{j/2} \Vert \eta_j(\tau-\varphi_a(\xi)) \mathcal{F}_{t,x}[1_{I}(t) P_k u] \Vert_{L_\tau^2 \ell^2_\xi} \lesssim \Vert \mathcal{F}_{t,x} (P_k u) \Vert_{X_{a,k}}
\end{equation*}
with implicit constant independent of $k$ and $I$.
\end{lemma}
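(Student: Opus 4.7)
The plan is to work entirely on the spacetime Fourier side. Multiplication by $1_I(t)$ becomes (up to a factor of $2\pi$) convolution in $\tau$ with the kernel
\[
K(\tau) := \int_{t_1}^{t_2} e^{-it\tau}\,dt = \frac{e^{-it_1\tau} - e^{-it_2\tau}}{i\tau},
\]
and only two facts about $K$ enter: the pointwise bound $|K(\tau)| \leq 2/|\tau|$, and the $L^2_\tau$-contractivity of convolution by $K/(2\pi)$, which is nothing but Plancherel applied to the trivial estimate $\|1_I u\|_{L^2_t} \leq \|u\|_{L^2_t}$. Both are uniform in the interval $I$, which is what produces the uniform constant.

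Decompose the modulation of the input: set $f_l(\tau,\xi) := \eta_l(\tau - \varphi_a(\xi))\,\mathcal{F}_{t,x}(P_k u)(\tau,\xi)$, $\alpha_l := \|f_l\|_{L^2_\tau \ell^2_\xi}$, and $\beta_l := 2^{l/2}\alpha_l$, so that $\|\mathcal{F}_{t,x}(P_k u)\|_{X_{a,k}} = \sum_{l \geq 0}\beta_l$. The assertion reduces, by the triangle inequality, to the bound
\[
\sup_{j \geq 0}\sum_{l \geq 0} A_{j,l} \lesssim \sum_{l \geq 0}\beta_l, \qquad A_{j,l} := 2^{j/2}\,\|\eta_j(\sigma-\varphi_a(\xi))(K \ast_\sigma f_l)\|_{L^2_\sigma \ell^2_\xi}.
\]
To bound $A_{j,l}$ I split into two regimes. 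In the off-diagonal case $|j - l| \geq 5$, the supports force $|\sigma - \tau| \sim 2^{\max(j,l)}$, so $|K(\sigma - \tau)| \lesssim 2^{-\max(j,l)}$; Cauchy--Schwarz in $\tau$ over the support of $f_l$ (size $\sim 2^l$) and then in $\sigma$ over that of $\eta_j$ (size $\sim 2^j$) yields $A_{j,l} \lesssim 2^{j - \max(j,l)}\,\beta_l$, a geometric factor $2^{-(l-j)}$ when $l > j$ and exactly $\beta_l$ when $l < j$. In the diagonal case $|j - l| \leq 5$ the kernel bound is useless (the worst instance being $j = l = 0$), and I invoke the $L^2_\tau$-contractivity directly: $\|K \ast_\sigma f_l\|_{L^2_\sigma \ell^2_\xi} \lesssim \alpha_l$, hence $A_{j,l} \lesssim 2^{j/2}\alpha_l \sim \beta_l$. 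Summing these contributions over $l$ for each fixed $j$ produces a geometric tail for $l > j$ plus an $O(1)$-bounded diagonal plus a trivial tail for $l < j$, each dominated by $\sum_l \beta_l$, uniformly in $j$.

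The main obstacle is the diagonal block $|j - l| \leq 5$, most acutely $j = l = 0$, where the pointwise estimate $|K(\tau)| \leq \min(|I|, 2/|\tau|)$ cannot be exploited since $|I|$ is unrestricted; this is the technical manifestation of the well-known failure of multiplication by a sharp cutoff to be bounded on $X^{0,1/2}$. What saves the argument here is that the left-hand side is the weaker quantity $\sup_j 2^{j/2}\|\cdots\|$ rather than the $X_{a,k}$-type $\sum_j 2^{j/2}\|\cdots\|$: on the diagonal the trivial $L^2_\tau$-boundedness of $u \mapsto 1_I u$ is exactly enough to close the estimate, and it is of course independent of both $k$ and $I$, yielding the claimed uniformity.
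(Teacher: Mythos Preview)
Your proof is correct and is essentially the standard argument for this lemma. The paper does not supply its own proof here but simply invokes \cite[Lemma~3.5]{GuoOh2018}; the argument you give---decompose the input in modulation, use the pointwise decay $|K(\tau)|\lesssim |\tau|^{-1}$ of the Fourier transform of $1_I$ for the off-diagonal pieces $|j-l|\gg 1$, and fall back on the trivial $L^2_t$-contractivity of multiplication by $1_I$ on the diagonal---is precisely the mechanism behind the cited result.
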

\section{Linear and bilinear estimates}
\label{section:LinearBilinearEstimates}
In the following we derive $L^2$-bilinear convolution estimates for space-time functions localized in frequency and modulation. Consider $k_i,j_i$, $i=1,2,3$ and $f_{k_i,j_i} \in L^2_{\geq 0}(\mathbb{Z} \times \mathbb{R})$, $\text{supp} (f_{k_i,j_i}) \subseteq D^a_{k_i,\leq j_i}$. Aim is to prove estimates
\begin{equation}
\label{eq:L2BilinearEstimateBenjaminOno}
\int \int f_{k_1,j_1}(\xi_1,\tau_1) f_{k_2,j_2}(\xi_2,\tau_2) f_{k_3,j_3}(\xi_3,\tau_3) d\Gamma_3(\xi) d\Gamma_3(\tau) \lesssim \alpha(\underline{k},\underline{j}) \prod_{i=1}^3 \Vert f_{k_i,j_i} \Vert_{L^2}
\end{equation}
The following $L^4_{t,x}$-Strichartz estimate is independent of the separation of the frequencies. The proof generalizes the $a=2$-case given in \cite[Lemma~3.3.,~p.~1906]{Molinet2012}.
\begin{lemma}
\label{lem:L4StrichartzEstimateFractionalBenjaminOno}
Let $1 \leq a \leq 2$, $f_{k_i,j_i} \in L^2_{\geq 0}(\Z \times \R)$, $\text{supp} f_{k_i,j_i} \subseteq D^a_{k_i,\leq j_i}$, $i=1,2$. Then, we find the following estimate to hold:
\begin{equation}
\label{eq:L4StrichartzEstimateFractionalDispersion}
\Vert f_{k_1,j_1} * f_{k_2,j_2} \Vert_{L^2_{\tau,\xi}} \lesssim 2^{j_{\min}/2} 2^{j_{\max}/(2(a+1))} \Vert f_{k_1,j_1} \Vert_2 \Vert f_{k_2,j_2} \Vert_2
\end{equation}
\end{lemma}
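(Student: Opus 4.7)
The plan is the standard Plancherel--Cauchy--Schwarz reduction for bilinear convolution estimates, followed by a lattice point count on level sets of the resonance function $\Phi(\xi_1) := \varphi_a(\xi_1) + \varphi_a(\xi - \xi_1)$.

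First, by Plancherel applied to the convolution together with Cauchy--Schwarz in the inner integral, one obtains
\[
\|f_{k_1, j_1} * f_{k_2, j_2}\|_{L^2_{\tau,\xi}}^2 \leq \sup_{\xi, \tau} |A(\xi, \tau)| \cdot \|f_{k_1, j_1}\|_2^2 \|f_{k_2, j_2}\|_2^2,
\]
where $A(\xi, \tau) = \{(\xi_1, \tau_1) \in \Z \times \R : (\xi_1, \tau_1) \in D^a_{k_1, \leq j_1},\ (\xi - \xi_1, \tau - \tau_1) \in D^a_{k_2, \leq j_2}\}$. The $\tau_1$ integration intersects two intervals of lengths $\sim 2^{j_1}$ and $\sim 2^{j_2}$: the intersection has measure $\lesssim 2^{j_{\min}}$ and is non-empty only when $|\Phi(\xi_1) - \tau| \lesssim 2^{j_{\max}}$. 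Hence $|A(\xi, \tau)| \lesssim 2^{j_{\min}} N(\xi, \tau)$ with
\[
N(\xi, \tau) = |\{\xi_1 \in \Z : |\xi_1| \sim 2^{k_1},\ |\xi - \xi_1| \sim 2^{k_2},\ |\Phi(\xi_1) - \tau| \lesssim 2^{j_{\max}}\}|,
\]
and the task reduces to proving $N(\xi, \tau) \lesssim 2^{j_{\max}/(a+1)}$ uniformly in $\xi, \tau, k_1, k_2$.

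The lattice count is where the dispersion exponent $a$ enters. Differentiating gives $\Phi'(\xi_1) = (a+1)(|\xi_1|^a - |\xi-\xi_1|^a)$, so for $\xi \neq 0$ the unique critical point is $\xi_1^\ast = \xi/2$, where $\Phi''(\xi_1^\ast) \asymp |\xi|^{a-1}$. A Taylor expansion combined with the convexity of $\varphi_a$ on each sign region yields a two-regime lower bound: $|\Phi(\xi_1) - \Phi(\xi_1^\ast)| \gtrsim |\xi|^{a-1}(\xi_1 - \xi_1^\ast)^2$ for $|\xi_1 - \xi_1^\ast| \lesssim |\xi|$, and $|\Phi(\xi_1) - \Phi(\xi_1^\ast)| \gtrsim |\xi_1 - \xi_1^\ast|^{a+1}$ for $|\xi_1 - \xi_1^\ast| \gtrsim |\xi|$. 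These two estimates place the sublevel set $\{|\Phi - \tau| \lesssim 2^{j_{\max}}\}$ inside an interval of length $\lesssim 2^{j_{\max}/(a+1)}$, and counting integers in that interval yields the claim.

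The main obstacle is obtaining the lattice count uniformly in the high--high regime $k_1 \sim k_2$ with small $|\xi|$, where the critical point $\xi/2$ sits inside the support and the two regimes must be patched across the transition scale $|\xi_1 - \xi/2| \sim |\xi|$. This step generalizes Molinet's argument for $a=2$, where the algebraic identity $\Phi(\xi_1) = \xi^3 - 3\xi\xi_1(\xi-\xi_1)$ reduces the count to a divisor problem; for general $a$ this identity is unavailable and must be replaced by the real-variable sublevel set analysis of $\varphi_a$ described above.
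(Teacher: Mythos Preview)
Your Cauchy--Schwarz reduction to the lattice count $N(\xi,\tau)\lesssim 2^{j_{\max}/(a+1)}$ is the right opening, but the second-regime lower bound you state is false, and the gap it leaves is exactly the high--high, small-$|\xi|$ case you flag as the obstacle.

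Concretely, take $\xi>0$ and $\xi_1>\xi$, so that $\xi_1$ and $\xi-\xi_1$ have opposite signs. Then $\Phi(\xi_1)=\xi_1^{a+1}-(\xi_1-\xi)^{a+1}$, and with $t:=\xi_1-\xi/2\gg\xi$ a Taylor expansion gives
\[
\Phi(\xi_1)-\Phi(\xi/2)\sim (a+1)\,\xi\,t^{a},
\]
not $t^{a+1}$; for $a=2$ this is exact, $\Phi(\xi_1)-\Phi(\xi/2)=3\xi t^{2}$. Hence on this branch $|\Phi'(\xi_1)|\sim |\xi|\,|\xi_1|^{a-1}$, and the sublevel count inside $|\xi_1|\sim 2^{k_1}$ is $\sim 2^{j_{\max}}/(|\xi|\,2^{(a-1)k_1})$, which is not controlled by $2^{j_{\max}/(a+1)}$ when $|\xi|$ is small. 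For $a=1$ the failure is stark: $\Phi'(\xi_1)=2\xi$ is constant on that branch, so the count is $\sim 2^{j_{\max}}/|\xi|$.

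The paper does not attempt a sharper sublevel analysis here. Instead it invokes the reflection identity $\|uv\|_{2}=\|u\bar v\|_{2}$: since $D^{a}_{k,\leq j}$ is invariant under $(\xi,\tau)\mapsto(-\xi,-\tau)$, one may assume both $f_{k_i,j_i}$ are supported in $\{\xi\geq 0\}$. This forces $0\leq\xi_1\leq\xi$, so the opposite-sign branch simply does not occur. On $[0,\xi]$ your regime-1 bound $|\Phi-\Phi(\xi/2)|\gtrsim|\xi|^{a-1}(\xi_1-\xi/2)^2$ is valid and gives $N\lesssim(2^{j_{\max}}/|\xi|^{a-1})^{1/2}+1$, which is $\lesssim 2^{j_{\max}/(a+1)}$ when $2^{j_{\max}}\leq|\xi|^{a+1}$; when $2^{j_{\max}}\geq|\xi|^{a+1}$ one uses the trivial bound $N\leq\xi+1\lesssim 2^{j_{\max}/(a+1)}$. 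Inserting this reflection step at the outset repairs your argument and matches the paper's proof.
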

\begin{proof}
By the reflection lemma (\cite[Corollary~3.8.]{Tao2001})
\begin{equation*}
\Vert u v \Vert_2 = \Vert u \overline{v} \Vert_2
\end{equation*}
we can suppose that $\text{supp}_\xi f_{k_i,j_i} \subseteq \mathbb{Z}$ for $i=1,2$.\\
An application of Cauchy-Schwarz gives
\begin{equation*}
\begin{split}
&\int d\tau \int (d\xi)_1 \left| \int d\tau_1 \int (d\xi_1)_1 f_{k_1,j_1}(\xi_1,\tau_1) f_{k_2,j_2}(\xi-\xi_1,\tau-\tau_1) \right|^2 \\
&\lesssim \sup_{\tau,\xi} \alpha(\tau,\xi) \Vert f_{k_1,j_1} \Vert_2^2 \Vert f_{k_2,j_2} \Vert_2^2
\end{split}
\end{equation*}
where 
\begin{equation*}
\begin{split}
\alpha(\tau,\xi) &\lesssim \text{mes}( \{ (\tau_1,\xi_1) \in \mathbb{R} \times \mathbb{Z}_{\geq 0} | \xi- \xi_1 \in \mathbb{Z}_{\geq 0}, \langle \tau_1 -\varphi_a(\xi_1) \rangle \lesssim 2^{j_1} \\
&\text{ and } \langle \tau - \tau_1 - \varphi_a(\xi-\xi_1) \rangle \lesssim 2^{j_2} \}) \lesssim 2^{j_{\min}} \# A(\tau,\xi)
\end{split}
\end{equation*}
with
\begin{equation*}
A(\tau,\xi) = \{ \xi_1 \geq 0 | \xi-\xi_1 \geq 0 \text{ and } \langle \tau - \varphi_a(\xi_1) - \varphi_a(\xi-\xi_1) \rangle \lesssim 2^{j_{\max}} \}
\end{equation*}
In the region $2^{j_{\max}} \leq \xi^{a+1}$, notice that
\begin{equation*}
\# A(\tau,\xi) \lesssim \left( \frac{2^{j_{\max}}}{\xi^{a-1}} \right)^{1/2} +1 \lesssim 2^{j_{\max}/(a+1)}
\end{equation*}
In the region $0 \leq \xi^{a+1} \leq 2^{j_{\max}}$, use that $0 \leq \xi_1 \leq \xi$ to obtain that
\begin{equation*}
\# A(\tau,\xi) \lesssim \# \{ \xi_1 | 0 \leq \xi_1^{a+1} \leq 2^{j_{\max}} \} \lesssim 2^{j_{\max}/(a+1)}
\end{equation*}
\eqref{eq:L4StrichartzEstimateFractionalDispersion} follows from the above two displays.
\end{proof}
The following $L^6_{t,x}$-estimate is a consequence of \cite[Proposition~1.1]{rsc2019StrichartzEstimatesDecoupling} and the transfer principle (cf. \cite[Lemma~2.9,~p.~100]{Tao2006})):
\begin{lemma}
\label{lem:L6StrichartzEstimateFractionalDispersion}
Let $1<a<2$, $f_{k,j} \in L^2_{\geq 0}(\mathbb{Z} \times \mathbb{R})$ with $\text{supp} (f_{k,j}) \subseteq D^a_{k,\leq j}$. Then, we find the following estimate to hold for any $\varepsilon > 0$:
\begin{equation*}
\Vert \mathcal{F}_{t,x}^{-1}[f_{k,j}] \Vert_{L^6_{t,x}} \lesssim 2^{\varepsilon k} 2^{j/2} \Vert f_{k,j} \Vert_{L^2_{\tau} \ell^2_{\xi}}
\end{equation*}
\end{lemma}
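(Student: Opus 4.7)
The plan is to combine the periodic linear $L^6$-Strichartz estimate for free solutions
\begin{equation*}
\|e^{it\varphi_a(\nabla/i)} P_k v_0\|_{L^6_{t,x}([-1,1]\times\mathbb{T})} \lesssim 2^{\varepsilon k}\|P_k v_0\|_{L^2(\mathbb{T})},
\end{equation*}
which is the content of the cited \cite[Proposition~1.1]{rsc2019StrichartzEstimatesDecoupling} and rests on an $\ell^2$-decoupling inequality for the curve $\tau=\varphi_a(\xi)$, with the transfer principle \cite[Lemma~2.9]{Tao2006}. The decoupling input is the only substantive ingredient; the rest is formal.

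Concretely, I would use the Fourier inversion formula \eqref{eq:InversionPeriodicSpaceTimeFourierTransform} and the substitution $\sigma=\tau-\varphi_a(\xi)$ to represent
\begin{equation*}
\mathcal{F}_{t,x}^{-1}[f_{k,j}](t,x) = c\int_{|\sigma|\lesssim 2^j} e^{it\sigma}\bigl(e^{it\varphi_a(\nabla/i)} g_\sigma\bigr)(x)\,d\sigma,\quad \widehat{g_\sigma}(\xi):=f_{k,j}(\varphi_a(\xi)+\sigma,\xi),
\end{equation*}
exhibiting $\mathcal{F}_{t,x}^{-1}[f_{k,j}]$ as a $\sigma$-superposition of frequency-localized free solutions on the dispersion curve. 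Minkowski's inequality in $\sigma$, together with the linear Strichartz estimate upgraded from $[-1,1]$ to the whole $t$-line by patching unit-scale windows and using Schwartz decay of the modulation cutoff built into the $\sigma$-integration, bounds the left-hand side by $2^{\varepsilon k}\int_{|\sigma|\lesssim 2^j}\|g_\sigma\|_{L^2_\xi}\,d\sigma$. Cauchy--Schwarz in $\sigma$ over an interval of length $\lesssim 2^j$ then yields the factor $2^{j/2}$, while Plancherel/Fubini identify $\bigl(\int\|g_\sigma\|^2_{L^2_\xi}\,d\sigma\bigr)^{1/2}$ with $\|f_{k,j}\|_{L^2_\tau\ell^2_\xi}$, closing the estimate.

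The only delicate point is the passage from the unit-time free Strichartz bound to the global-in-time bound that is used inside the $\sigma$-integral. This is handled by the modulation localization $|\sigma|\lesssim 2^j$, which effectively concentrates $\mathcal{F}^{-1}_{t,x}[f_{k,j}]$ on time scales against which Strichartz on unit windows can be summed; equivalently, one may cover $\mathbb{R}_t$ by unit windows $\{[n,n+1]\}_{n\in\mathbb{Z}}$, apply Strichartz on each, and sum sixth powers, which costs at most a constant thanks to the Schwartz-type decay in $t$ of the $\sigma$-integrated expression. A secondary care-point is the endpoint $2^{j/2}$ rather than $2^{j(1/2+)}$ that the naive $X^{0,1/2+}$-transfer would produce; this is recovered by keeping the $\sigma$-integration (with its Cauchy--Schwarz) in place of a dyadic modulation decomposition. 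The genuine analytic obstacle is thus only the decoupling estimate itself, which is imported as a black box.
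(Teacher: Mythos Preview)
Your approach matches the paper's exactly: invoke the linear $L^6$ Strichartz estimate from \cite[Proposition~1.1]{rsc2019StrichartzEstimatesDecoupling} and lift it via the transfer principle; the paper gives no further detail beyond citing these two ingredients, whereas you spell out the $\sigma$-representation and the Cauchy--Schwarz step that recovers the sharp $2^{j/2}$ instead of $2^{j(1/2+)}$.

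One caveat on the point you flag as delicate: your local-to-global patching is not sound as written. Once you apply Minkowski in $\sigma$, the bound $2^{\varepsilon k}\int\|g_\sigma\|_{L^2_\xi}\,d\sigma$ is the \emph{same} on every unit window $[n,n+1]$, so summing sixth powers over $n$ diverges; the ``Schwartz-type decay in $t$'' you invoke is precisely what is destroyed by passing the norm inside the $\sigma$-integral. The paper does not address this either, and in fact it need not: in every application (Lemma~\ref{lem:MultilinearL6StrichartzEstimate} and the energy estimates in Section~\ref{section:EnergyEstimates}) the $f_{k_i,j_i}$ arise from functions already multiplied by the short-time cutoffs $\gamma(2^{(2-a+\delta)k_{\max}}t-m)$, so the relevant $L^6_{t,x}$ norm is over a bounded time interval and the unit-time Strichartz estimate applies directly with no globalization needed.
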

Next, we consider multilinear refinements:
\begin{lemma}
\label{lem:L2BilinearEstimateHighLowHighInteractionBenjaminOno}
Let $|k_1-k_3| \leq 5, k_2 \leq k_1-10$. Then, we find \eqref{eq:L2BilinearEstimateBenjaminOno} to hold with
\begin{equation*}
\begin{split}
\alpha(\underline{k},\underline{j}) = \min( &(1+2^{j_3-ak_1})^{1/2} 2^{j_2/2}, (1+2^{j_2-ak_1})^{1/2} 2^{j_1/2}, \\ 
&(1+2^{j_3-(a-1)k_1-k_2})^{1/2} 2^{j_1/2} )
\end{split}
\end{equation*}
\end{lemma}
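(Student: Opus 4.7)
\medskip

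\noindent\textbf{Plan of proof.} I would prove each of the three bounds inside the $\min$ separately by Cauchy--Schwarz, choosing which factor plays the role of the ``passive'' $L^2$--function differently in each case. For any choice of $\{i,j,l\} = \{1,2,3\}$, use the convolution structure on $\Gamma_3$ to write
\begin{equation*}
I = \int f_{k_i,j_i}(\xi_i,\tau_i) \Big( \int f_{k_j,j_j}(\xi_j,\tau_j) f_{k_l,j_l}(-\xi_i-\xi_j,-\tau_i-\tau_j) \, d\xi_j \, d\tau_j \Big) d\xi_i \, d\tau_i,
\end{equation*}
and apply Cauchy--Schwarz twice to obtain
\begin{equation*}
|I| \lesssim \prod_{m=1}^3 \Vert f_{k_m,j_m}\Vert_{L^2} \cdot \sup_{(\xi_i,\tau_i) \in \mathrm{supp}\, f_{k_i,j_i}} |A_i(\xi_i,\tau_i)|^{1/2},
\end{equation*}
where $A_i(\xi_i,\tau_i)$ is the set of $(\xi_j,\tau_j) \in \mathbb{Z} \times \mathbb{R}$ for which $(\xi_j,\tau_j)$ and $(-\xi_i-\xi_j,-\tau_i-\tau_j)$ lie in $\mathrm{supp}\, f_{k_j,j_j}$ and $\mathrm{supp}\, f_{k_l,j_l}$ respectively.

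\medskip

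\noindent The bound on $|A_i|$ is obtained by separating the $\tau_j$ and $\xi_j$ directions. The two modulation conditions constrain $\tau_j$ to an interval of length $\min(2^{j_j},2^{j_l})$; subtracting them and using that $\varphi_a$ is odd eliminates $\tau_j$ and yields the single constraint
\begin{equation*}
\bigl| \varphi_a(\xi_j) + \varphi_a(\xi_i + \xi_j) + \tau_i \bigr| \lesssim 2^{\max(j_j,j_l)}.
\end{equation*}
The number of admissible $\xi_j \in \mathbb{Z}$ is then at most $1 + 2^{\max(j_j,j_l)}/\Delta_i$, where $\Delta_i = |\partial_{\xi_j}(\varphi_a(\xi_j) + \varphi_a(\xi_i+\xi_j))|$. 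Since $\varphi_a'(\xi) = (a+1)|\xi|^a$, one computes:
\begin{itemize}
\item For $i=1$ the derivative is $(a+1)(|\xi_j|^a+|\xi_1+\xi_j|^a) \sim 2^{a k_1}$, because $|\xi_1|\sim|\xi_1+\xi_j|\sim 2^{k_1} \gg |\xi_j|\sim 2^{k_2}$; this produces the bound $\alpha_1 = (1+2^{j_3-ak_1})^{1/2} 2^{j_2/2}$.
\item For $i=3$, by the same mechanism, $\Delta_3\sim 2^{ak_1}$, giving $\alpha_2 = (1+2^{j_2-ak_1})^{1/2} 2^{j_1/2}$.
\item For $i=2$ the derivative becomes $(a+1)\bigl||\xi_j|^a - |\xi_j+\xi_2|^a\bigr|$; Taylor expanding in the small parameter $\xi_2$ gives $\Delta_2 \sim a(a+1)|\xi_1|^{a-1}|\xi_2| \sim 2^{(a-1)k_1+k_2}$, yielding $\alpha_3$.
\end{itemize}

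\medskip

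\noindent Taking the minimum of the three estimates gives the claim. The only mild subtlety is that the above counting naturally delivers $2^{\min(j_j,j_l)/2}(1+2^{\max(j_j,j_l)}/\Delta_i)^{1/2}$, whereas the bounds $\alpha_1,\alpha_2,\alpha_3$ are written with a fixed assignment of $j_j$ and $j_l$ to the two factors. A short case analysis (splitting according to whether $2^{\max(j_j,j_l)} \lessgtr \Delta_i$) verifies in each case that the quantity from the counting is dominated by the corresponding $\alpha_m$ regardless of which of $j_j,j_l$ realizes the minimum. Thus no obstacle remains beyond the bookkeeping of these dispersion--induced transverse derivatives.
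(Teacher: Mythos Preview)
Your approach is correct and is essentially the paper's: both arguments reduce to Cauchy--Schwarz together with a bound on the size of the interaction set, controlled by the $\xi_j$-derivative of the resonance function; the paper carries this out after the substitution $f^\#(\xi,\tau)=f(\xi,\tau+\varphi_a(\xi))$ and applies Cauchy--Schwarz variable by variable, while you package the same computation as $|I|\lesssim (\sup|A_i|)^{1/2}\prod\|f_m\|_2$, but the key derivative sizes $\Delta_1\sim\Delta_3\sim 2^{ak_1}$ and $\Delta_2\sim 2^{(a-1)k_1+k_2}$ are identical. One small correction: your displayed constraint should read $|\varphi_a(\xi_j)-\varphi_a(\xi_i+\xi_j)+\tau_i|\lesssim 2^{\max(j_j,j_l)}$ (a sign slip from using that $\varphi_a$ is odd); with the $+$ sign as written the $i=2$ derivative would be a sum $\sim 2^{ak_1}$ rather than the difference $\sim 2^{(a-1)k_1+k_2}$ you (correctly) obtain, so this is only a typo but worth fixing for consistency.
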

\begin{proof}
We perform a change of variables $f_{k_i,j_i}^{\#}(\xi,\tau) = f_{k_i,j_i}(\xi,\tau+\varphi_a(\xi))$ so that $\Vert f_{k_i,j_i}^{\#} \Vert_2 = \Vert f_{k_i,j_i} \Vert_2$ and
$\text{supp} (f_{k_i,j_i}^{\#}) \subseteq \{ (\xi_i,\tau_i) \in \mathbb{Z} \times \mathbb{R} \; | \; |\xi_i| \sim 2^{k_i}, \, |\tau_i| \lesssim 2^{j_i} \}$\footnote{Actually, in the following computations we freely interchange $f$ with $\tilde{f}(\xi,\tau) = f(-\xi,-\tau)$ as $\Vert \tilde{f} \Vert_2 = \Vert f \Vert_2$.}.\\
The resonance function 
\begin{equation}
\label{eq:dispersionGeneralizedResonanceFunctionBenjaminOno}
\Omega^a(\xi_1,\xi_2) = (\xi_1+\xi_2)|\xi_1+\xi_2|^a - \xi_1 |\xi_1|^a - \xi_2 |\xi_2|^a
\end{equation}
will come into play quantifying the effective support of the involved functions. Record
\begin{equation}
\label{eq:GroupVelocitiesHighLowBenjaminOno}
\begin{split}
\left| \frac{\partial \Omega^a}{\partial \xi_1} \right| &= ||\xi_1+\xi_2|^a - |\xi_1|^a| \sim |\xi_1|^{a-1} |\xi_2| \\
\left| \frac{\partial \Omega^a}{\partial \xi_2} \right| &= ||\xi_1+\xi_2|^a - |\xi_2|^a| \sim |\xi_1+\xi_2|^a
\end{split}
\end{equation}
We prove the first estimate. An application of Cauchy-Schwarz inequality in $\xi_2$ yields
\begin{equation*}
\begin{split}
&\int \int f_{k_1,j_1}(\xi_1,\tau_1) f_{k_2,j_2}(\xi_2,\tau_2) f_{k_3,j_3}(\xi_3,\tau_3) d\Gamma_3(\xi) d\Gamma_3(\tau) \\
&= \int (d\xi_1)_1 \int d\tau_1 f_{k_1,j_1}^{\#}(\xi_1,\tau_1) \int d\tau_2 (1+2^{j_3-ak_1})^{1/2} \\
&\left( \int (d\xi_2)_1 |f_{k_2,j_2}^{\#}(\xi_2,\tau_2)|^2 |f_{k_3,j_3}^{\#}(\xi_1+\xi_2,\tau_1+\tau_2+\Omega)|^2 \right)^{1/2}
\end{split}
\end{equation*}
Further applications of Cauchy-Schwarz in $\tau_1,\xi_1$ and $\tau_2$ yield
\begin{equation*}
\begin{split}
&\lesssim \int (d\xi_1)_1 \int d\tau_2 (1+2^{j_3-ak_1})^{1/2} \left( \int d\tau_1 |f_{k_1,j_1}^{\#}(\xi_1,\tau_1)|^2 \right)^{1/2} \\
&\times \left( \int (d\xi_2)_1 |f_{k_2,j_2}^{\#}(\xi_2,\tau_2)|^2 \int d\tau_1 |f_{k_3,j_3}^{\#}(\xi_1+\xi_2,\tau_1+\tau_2+\Omega^a)|^2 \right)^{1/2} \\
&\lesssim (1+2^{j_3-ak_1})^{1/2} \int d\tau_2 \Vert f_{k_1,j_1}^{\#}\Vert_2 \left( \int (d\xi_2)_1 |f_{k_2,j_2}^{\#}(\xi_2,\tau_2)|^2 \right)^{1/2} \Vert f^{\#}_{k_3,j_3} \Vert_{L^2} \\
&\lesssim 2^{j_2/2} (1+2^{j_3-ak_1})^{1/2} \prod_{i=1}^3 \Vert f^{\#}_{k_i,j_i} \Vert_{L^2}
\end{split}
\end{equation*}
This yields the first bound.\\
For the second claim carry out the same computation after rearranging
\begin{equation*}
\begin{split}
&\int \int f_{k_3,j_3}(\xi_3,\tau_3) f_{k_1,j_1}(\xi_1,\tau_1) f_{k_2,j_2}(\xi_2,\tau_2) d\Gamma_3(\xi) d\Gamma_3(\tau) \\
&= \int (d\xi_3)_1 \int d\tau_3 f_{k_3,j_3}^{\#}(\xi_3,\tau_3) \\
&\int (d\xi_1)_1 \int d\tau_1 f_{k_1,j_1}^{\#}(\xi_1,\tau_1) f_{k_2,j_2}^{\#}(\xi_1+\xi_3,\tau_1+\tau_3+\Omega^a(\xi_1,\xi_3))
\end{split}
\end{equation*}
Note that
\begin{equation*}
\left| \frac{\partial \Omega^a(\xi_1,\xi_3)}{\partial \xi_1} \right| \sim | |\xi_1+\xi_3|^a - |\xi_1|^a| \sim |\xi_1|^a
\end{equation*}
Firstly, apply Cauchy-Schwarz in $\xi_1$ to find
\begin{equation*}
\begin{split}
&\lesssim \int (d\xi_3)_1 \int d\tau_3 f_{k_3,j_3}^{\#}(\xi_3,\tau_3) \int d\tau_1 (1+2^{j_2-ak_1})^{1/2} \\
&\times \left( \int (d\xi_1)_1 |f_{k_1,j_1}^{\#}(\xi_1,\tau_1)|^2 |f_{k_2,j_2}^{\#}(\xi_1+\xi_3,\tau_1+\tau_3+\Omega^a)|^2 \right)^{1/2} 
\end{split}
\end{equation*}
and next, apply Cauchy-Schwarz in $\tau_3$, $\xi_3$ and at last $\tau_1$ to find the bound
\begin{equation*}
\lesssim 2^{j_1/2} (1+2^{j_2-ak_1})^{1/2} \prod_{i=1}^3 \Vert f_{k_i,j_i} \Vert_2
\end{equation*}
The third bound will be established by the same argument. The difference of the group velocity is less favourable though, leading to inferior estimates: An application of the Cauchy-Schwarz inequality in $\xi_1$ yields
\begin{equation*}
\begin{split}
&\int d\tau_2 \int (d\xi_2)_1 f_{k_2,j_2}^{\#} (\xi_2,\tau_2) \int (d\xi_1)_1 \int d\tau_1 f_{k_1,j_1}^{\#}(\xi_1,\tau_1) f_{k_3,j_3}^{\#}(\xi_1+\xi_2,\tau_1+\tau_2+\Omega^a) \\
&\lesssim \int d\tau_2 \int (d\xi_2)_1 f_{k_2,j_2}^{\#}(\xi_2,\tau_2) \int d\tau_1 (1+2^{j_3-(a-1)k_1-k_2})^{1/2} \\
&\times \left( \int (d\xi_1)_1 |f_{k_1,j_1}^{\#}(\xi_1,\tau_1)|^2 |f_{k_3,j_3}^{\#}(\xi_1+\xi_2,\tau_1+\tau_2+\Omega^a)|^2 \right)^{1/2}
\end{split}
\end{equation*}
Now apply Cauchy-Schwarz like above in $\tau_2$, $\xi_2$ and $\tau_1$ to find
\begin{equation*}
\lesssim 2^{j_1/2} (1+2^{j_3-(a-1)k_1-k_2})^{1/2} \prod_{i=1}^3 \Vert f_{k_i,j_i}\Vert_2
\end{equation*}
This proves the third bound.
\end{proof}
\begin{remark}
Unless one introduces modulation weights like e.g. in \cite{GuoPengWangWang2011} the third bound is insufficient to overcome the derivative loss in case of $High \times Low\rightarrow High$-interaction. Moreover, it is this estimate which complicates short time bilinear estimates in negative Sobolev spaces.
\end{remark}
\begin{lemma}
\label{lem:L2BilinearEstimateHighHighHighInteractionBenjaminOno}
Let $1 \leq a \leq 2$. If $|k_i-k_j| \leq 5$, $i=1,2,3$, then we find \eqref{eq:L2BilinearEstimateBenjaminOno} to hold with $\alpha(\underline{k},\underline{j}) = 2^{j_{i_1}/2} (1+2^{j_{i_2}-(a-1)k_1})^{1/4}$ for any $i_1, i_2 \in \{1,2,3\}$ provided that $i_1 \neq i_2$.\\
Suppose in addition that $||\xi_{i_1}|^a - |\xi_{i_2}|^a| \sim 2^{ak_1}$ provided that $\xi_{i_m} \in \text{supp} (f_{k_{i_m},j_{i_m}})$, $i_m \in \{1,2,3 \}$. Then, we find \eqref{eq:L2BilinearEstimateBenjaminOno} to hold with $\alpha = 2^{j_{i_1}/2} (1+2^{j_{i_2}-ak_1})^{1/2}$.
\end{lemma}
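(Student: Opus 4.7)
The plan is to reduce both bounds to a lattice counting problem via the shifted variables $f_{k_i,j_i}^{\#}(\xi,\tau) = f_{k_i,j_i}(\xi, \tau + \varphi_a(\xi))$ used in Lemma \ref{lem:L2BilinearEstimateHighLowHighInteractionBenjaminOno}. After applying Cauchy-Schwarz in one spatial frequency $\xi_m$, the estimate is controlled by the square root of
\[
N := \#\bigl\{\xi_m \in \mathbb{Z} : |\xi_m|, |\xi_1+\xi_2| \sim 2^{k_1},\ |\tau + \Omega^a(\xi_1,\xi_2)| \lesssim 2^{j_{i_2}}\bigr\},
\]
the remaining variables being fixed. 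By permutation invariance of the trilinear form, one representative pair $(i_1,i_2)$ suffices in each part.

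For the second, sharper bound the additional hypothesis $\bigl||\xi_{i_1}|^a - |\xi_{i_2}|^a\bigr| \sim 2^{ak_1}$ furnishes the uniform lower bound $|\partial_{\xi_m}\Omega^a| = (a+1)\bigl||\xi_1+\xi_2|^a - |\xi_m|^a\bigr| \sim 2^{ak_1}$ for the appropriate $m$, so $N \lesssim 1 + 2^{j_{i_2} - ak_1}$. Chaining the remaining Cauchy-Schwarz steps in $\tau_1, \xi_1, \tau_2$ as in the previous proof produces $\alpha = 2^{j_{i_1}/2}(1 + 2^{j_{i_2} - ak_1})^{1/2}$.

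For the first, universal bound $\partial_{\xi_m}\Omega^a$ can vanish at a critical point $c$ where $|\xi_1+\xi_2| = |\xi_m|$; this occurs only when $\xi_1$ and $\xi_m$ have opposite signs with $|\xi_m| < |\xi_1|$, all remaining sign configurations yielding $|\partial_{\xi_m}\Omega^a| \sim 2^{ak_1}$ and hence, by the second part, a count already dominated by the weaker target (using $j_{i_2} \lesssim (a+1)k_1$ in the resonant regime). In the degenerate configuration a direct computation gives $\partial^2_{\xi_m}\Omega^a = a(a+1)\bigl(|\xi_1+\xi_m|^{a-1} + |\xi_m|^{a-1}\bigr) \sim 2^{(a-1)k_1}$ uniformly on a neighbourhood of $c$. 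Splitting the $\xi_m$-range into the near region $|\xi_m - c| \lesssim R$ and its complement with $R \sim 2^{(j_{i_2} - (a-1)k_1)/2}$, the near region contributes $\lesssim 1 + R$ lattice points, while on the complement the bound $|\partial_{\xi_m}\Omega^a| \gtrsim 2^{(a-1)k_1}|\xi_m - c|$ combined with a dyadic decomposition in $|\xi_m - c|$ yields a geometrically convergent count $\lesssim 1 + 2^{j_{i_2}}/(R \cdot 2^{(a-1)k_1})$. Both contributions balance at $N \lesssim 1 + 2^{(j_{i_2}-(a-1)k_1)/2}$, and completing the Cauchy-Schwarz chain delivers $\alpha = 2^{j_{i_1}/2}(1 + 2^{j_{i_2} - (a-1)k_1})^{1/4}$.

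The main technical point is ensuring that $\partial^2_{\xi_m}\Omega^a$ remains $\sim 2^{(a-1)k_1}$ uniformly on the entire near-neighbourhood of $c$ (not merely at $c$ itself). This follows from the explicit expression above, in which both summands are positive and of size $2^{(a-1)k_1}$ throughout the degenerate sign configuration, so that the quadratic picture of $\Omega^a$ near $c$ that underlies the balance between the two regions is not spoiled.
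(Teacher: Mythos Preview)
Your approach is essentially the same as the paper's: reduce via Cauchy--Schwarz in one spatial frequency to a lattice count, then control the count through derivative bounds on $\Omega^a$. The paper is slightly more direct for the first bound: rather than splitting into sign configurations, it observes (``Case-by-Case analysis according to the signs'') that $|\partial^2_{\xi_2}\Omega^a| \sim 2^{(a-1)k_1}$ holds uniformly across \emph{all} sign configurations when $a>1$, which immediately yields $N \lesssim 1 + 2^{(j_{i_2}-(a-1)k_1)/2}$ by the standard van der Corput sublevel-set count without isolating a critical point. Your near/far decomposition and first-derivative treatment of the non-degenerate configurations is correct but more elaborate, and your parenthetical ``using $j_{i_2} \lesssim (a+1)k_1$ in the resonant regime'' is imprecise as stated, since the lemma imposes no upper bound on $j_{i_2}$; for $j_{i_2} > (a+1)k_1$ one should instead invoke the trivial count $N \lesssim 2^{k_1} \lesssim 2^{(j_{i_2}-(a-1)k_1)/2}$. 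The paper also disposes of $a=1$ separately (where the uniform second-derivative bound fails in same-sign configurations) by appealing to Lemma~\ref{lem:L4StrichartzEstimateFractionalBenjaminOno}. For the second, sharper part your argument matches the paper's.
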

\begin{proof}
We assume in the following that $a>1$ because the claim is covered in Lemma \ref{lem:L4StrichartzEstimateFractionalBenjaminOno} for $a=1$. For the first claim we apply Cauchy-Schwarz in $\xi_2$ to find
\begin{equation*}
\begin{split}
&\int d\tau_1 \int (d\xi_1)_1 f_{k_1,j_1}^{\#}(\xi_1,\tau_1) \int d\tau_2 \int (d\xi_2)_1 f_{k_2,j_2}^{\#}(\xi_2,\tau_2) f_{k_3,j_3}^{\#}(\xi_1+\xi_2,\tau_1+\tau_2+\Omega^a) \\
&\lesssim \int d\tau_1 \int (d\xi_1)_1 f_{k_1,j_1}^{\#}(\xi_1,\tau_1) \int d\tau_2 (1+2^{j_3-(a-1)k_1})^{1/4} \\
&\times \left( \int (d\xi_2)_1 |f_{k_2,j_2}^{\#}(\xi_2,\tau_2)|^2 |f_{k_3,j_3}^{\#}(\xi_1+\xi_2,\tau_1+\tau_2+\Omega^a)|^2 \right)^{1/2}
\end{split}
\end{equation*}
This estimate follows due to
\begin{equation*}
\left| \frac{\partial^2 \Omega^a}{\partial \xi_2^2} \right| \sim 2^{(a-1) k_1},
\end{equation*}
which is straight-forward from Case-by-Case analysis according to the signs of the involved frequencies.\\
Applications of Cauchy-Schwarz in $\tau_1$, $\xi_1$ and $\tau_2$ lead to
\begin{equation*}
\lesssim 2^{j_2/2} (1+2^{j_3-(a-1)k_1})^{1/4} \prod_{i=1}^3 \Vert f_{k_i,j_i} \Vert_2
\end{equation*}
which proves the first claim for $m_1=2$, $m_2=3$. There is no loss of generality due to the symmetry among $k_i$, $i=1,2,3$.\\
For the second claim we argue like in Lemma \ref{lem:L2BilinearEstimateHighLowHighInteractionBenjaminOno}: Let $i_1 = 3$, $i_2=2$. From the proof it will be clear that this is no loss of generality.\\
We apply the Cauchy-Schwarz inequality in $\xi_2$ to find
\begin{equation*}
\begin{split}
&\int d\tau_1 \int (d\xi_1)_1 f_{k_1,j_1}^{\#}(\xi_1,\tau_1) \int d\tau_2 \int (d\xi_2)_1 f_{k_2,j_2}^{\#}(\xi_2,\tau_2) f_{k_3,j_3}^{\#}(\xi_1+\xi_2,\tau_1+\tau_2+\Omega^a) \\
&\lesssim \int d\tau_1 \int (d\xi_1)_1 f_{k_1,j_1}^{\#}(\xi_1,\tau_1) \int d\tau_2 (1+2^{j_3-ak_1})^{1/2} \\
&\times \left( \int (d\xi_2)_1 |f_{k_2,j_2}^{\#}(\xi_2,\tau_2)|^2 |f_{k_3,j_3}^{\#}(\xi_1+\xi_2,\tau_1+\tau_2+\Omega^a)|^2 \right)^{1/2}
\end{split}
\end{equation*} 
Now the claim follows from application of Cauchy-Schwarz inequality in $\tau_1$, $\xi_1$ and $\tau_2$.
\end{proof}
To estimate lower order terms, we use the following estimate not exploiting the dispersion relation, but following from Cauchy-Schwarz inequality:
\begin{lemma}
\label{lem:L2BilinearEstimateCauchySchwarzBenjaminOno}
Estimate \eqref{eq:L2BilinearEstimateBenjaminOno} holds with $\alpha = 2^{k_{\min}/2} 2^{j_{\min}/2}$.
\end{lemma}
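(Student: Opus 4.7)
The plan is to prove the bound by a two-stage Cauchy-Schwarz argument, first integrating out the temporal variables and then summing in the spatial frequencies, placing the short-support factor on an optimally chosen index at each stage. The key observation is that modulation support sizes and frequency support sizes may be exploited independently, so the indices $m_j, m_k \in \{1,2,3\}$ realizing $j_{\min} = j_{m_j}$ and $k_{\min} = k_{m_k}$ need not coincide.

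For the first stage, set $a_i(\xi) = \Vert f_{k_i,j_i}(\xi,\cdot) \Vert_{L^2_\tau}$, so that by Fubini $\Vert a_i \Vert_{\ell^2_\xi} = \Vert f_{k_i,j_i} \Vert_{L^2}$. For fixed $(\xi_1,\xi_2,\xi_3) \in \Gamma_3$ consider the temporal integral
\begin{equation*}
J(\xi_1,\xi_2,\xi_3) = \int f_{k_1,j_1}(\xi_1,\tau_1) f_{k_2,j_2}(\xi_2,\tau_2) f_{k_3,j_3}(\xi_3,-\tau_1-\tau_2) \, d\tau_1 d\tau_2.
\end{equation*}
Using $\tau_1+\tau_2+\tau_3=0$ we parametrize this integration by the two variables $\tau_i$ with $i \neq m_j$ (performing a harmless linear change of variables if $m_j=3$). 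Cauchy-Schwarz in the one of these that does not carry the $m_j$-factor bounds the two non-$m_j$ factors by the product of $L^2_\tau$-norms, giving $a_i(\xi_i)$ for $i \neq m_j$, while the remaining $\tau_{m_j}$-integral of $f_{k_{m_j},j_{m_j}}(\xi_{m_j},\cdot)$ is supported in an interval of length $\lesssim 2^{j_{\min}}$, so a further Cauchy-Schwarz produces $2^{j_{\min}/2} a_{m_j}(\xi_{m_j})$. Altogether $J(\xi_1,\xi_2,\xi_3) \lesssim 2^{j_{\min}/2} a_1(\xi_1) a_2(\xi_2) a_3(\xi_3)$.

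The second stage is analogous, with $d\Gamma_3(\xi)$ playing the role of $d\Gamma_3(\tau)$. After inserting the first-stage bound, the left-hand side of \eqref{eq:L2BilinearEstimateBenjaminOno} is controlled by $2^{j_{\min}/2}\sum_{\Gamma_3} a_1(\xi_1) a_2(\xi_2) a_3(\xi_3)$. We parametrize the $\Gamma_3$-sum by two free frequencies, one of which is $\xi_{m_k}$, apply Cauchy-Schwarz in the other free variable to bound the two non-$m_k$ factors by $\Vert a_i \Vert_{\ell^2_\xi}\Vert a_j \Vert_{\ell^2_\xi}$, and control the remaining $\xi_{m_k}$-sum (supported on at most $\lesssim 2^{k_{\min}}$ integers) by $2^{k_{\min}/2}\Vert a_{m_k} \Vert_{\ell^2_\xi}$ via one final Cauchy-Schwarz. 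Using $\Vert a_i \Vert_{\ell^2_\xi} = \Vert f_{k_i,j_i} \Vert_{L^2}$ yields the claimed bound $\alpha(\underline{k},\underline{j}) = 2^{k_{\min}/2} 2^{j_{\min}/2}$.

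Since no dispersive information enters, there is no substantive obstacle. The only point worth highlighting is that $m_j$ and $m_k$ may differ; this is precisely why the argument decouples the two minima, rather than yielding only the weaker bound $\min_i 2^{(k_i+j_i)/2}$ that a single Cauchy-Schwarz applied simultaneously in $\xi_i$ and $\tau_i$ would give.
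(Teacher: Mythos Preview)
Your argument is correct and is exactly the elementary Cauchy--Schwarz computation the paper has in mind (the paper omits the proof entirely, stating only that the estimate ``does not exploit the dispersion relation'' and follows from Cauchy--Schwarz). One small slip in the first stage: with the parametrization ``by the two variables $\tau_i$ with $i\neq m_j$'' both free variables enter the $m_j$-factor through $\tau_{m_j}=-\sum_{i\neq m_j}\tau_i$, so neither ``does not carry the $m_j$-factor''; what you want is the same scheme you used in the spatial stage --- take $\tau_{m_j}$ as one free variable, apply Cauchy--Schwarz in the other to pair the two non-$m_j$ factors, then use the $\lesssim 2^{j_{\min}}$ support in the remaining $\tau_{m_j}$-integral.
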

\section{Short time bilinear estimates}
\label{section:ShorttimeBilinearEstimates}
Purpose of this section is to prove the following proposition:
\begin{proposition}
\label{prop:ShorttimePropagationNonlinearityBenjaminOno}
Let $T \in (0,1]$ and $u,v \in F^{s,\delta}_a(T)$, $i=1,2$.\\
If $1 < a \leq 3/2$, then there are $\delta = \delta(a,s)>0$ and $\theta = \theta(a,s)>0$, so that we find the following estimates to hold:
\begin{align}
\label{eq:ShorttimeL2EstimateBenjaminOnoSmallDispersion}
\Vert \partial_x (u v) \Vert_{N_a^{0,\delta}(T)} &\lesssim T^\theta \Vert u \Vert_{F^{0,\delta}_a(T)} \Vert v \Vert_{F^{0,\delta}_a(T)} \\
\label{eq:ShorttimeNegativeSobolevEstimateBenjaminOnoSmallDispersion}
\Vert \partial_x (u v) \Vert_{N^{-1/2,\delta}_a(T)} &\lesssim T^\theta \Vert u \Vert_{F^{-1/2,\delta}_a(T)} \Vert v \Vert_{F^{s,\delta}_a(T)}
\end{align}
provided that $s>3/2-a$.\\
If $3/2 < a < 2$, then there are $\delta(a)>0$ and $\varepsilon(a) > 0$, so that we find the following estimate to hold:
\begin{align}
\label{eq:ShorttimeL2EstimateBenjaminOnoLargeDispersion}
\Vert \partial_x (u v) \Vert_{N_a^{0,\delta}(T)} &\lesssim T^\theta \Vert u \Vert_{F^{0,\delta}_a(T)} \Vert v \Vert_{F^{0,\delta}_a(T)} \\
\label{eq:ShorttimeNegativeSobolevEstimateBenjaminOnoLargeDispersion}
\Vert \partial_x ( uv) \Vert_{N^{-1/2,\delta}_a(T)} &\lesssim T^\theta \Vert u \Vert_{F^{0,\delta}_a(T)} \Vert v \Vert_{F^{-1/2,\delta}_a(T)}
\end{align}
\end{proposition}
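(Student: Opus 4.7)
The plan is a dyadic frequency case analysis that reduces matters to the trilinear $L^2$-estimate \eqref{eq:L2BilinearEstimateBenjaminOno}. Performing Littlewood--Paley decomposition of both inputs and of the output, the task reduces to controlling
\begin{equation*}
2^{k_3} \| P_{k_3}(P_{k_1} u \cdot P_{k_2} v) \|_{N^{\delta}_{a,k_3}(T)}
\end{equation*}
by the product $\|P_{k_1}u\|_{F^{\delta}_{a,k_1}(T)} \|P_{k_2}v\|_{F^{\delta}_{a,k_2}(T)}$ with a weight in $(k_1,k_2,k_3)$ that is summable once the Sobolev weights $2^{sk_i}$ or $2^{-k_i/2}$ are applied. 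Unpacking the $N$-norm, the output is extended to a time window of length $T_0 = 2^{(a-2-\delta)k_3}$ and, after division by $(\tau-\varphi_a(\xi)+i2^{(2-a+\delta)k_3})$, measured in $X_{a,k_3}$; this effectively pays $2^{-j_3/2}$ for output modulations $j_3 \geq (2-a+\delta)k_3$.

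The two inputs naturally live on the longer time scales $2^{(a-2-\delta)k_i}$, which I cut into subwindows of length $T_0$ using additional $k_3$-acceptable cutoffs via \eqref{eq:timeLocalizationShorttimeNorms}; the resulting sharp cutoff losses are absorbed by Lemma \ref{lem:sharpTimeCutoffAlmostBounded}. After a further dyadic decomposition in the modulation variables $j_i$, the trilinear object is bounded by $\alpha(\underline k,\underline j)\cdot 2^{j_1/2}2^{j_2/2}2^{-j_3/2}$ and the remaining work is to apply the most efficient of Lemmas \ref{lem:L4StrichartzEstimateFractionalBenjaminOno}, \ref{lem:L2BilinearEstimateHighLowHighInteractionBenjaminOno}, \ref{lem:L2BilinearEstimateHighHighHighInteractionBenjaminOno} and \ref{lem:L2BilinearEstimateCauchySchwarzBenjaminOno} in each frequency configuration.

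The standard trichotomy proceeds as follows. In the \emph{High}$\times$\emph{High}$\to$\emph{High} regime $k_1 \sim k_2 \sim k_3$ one invokes Lemma \ref{lem:L2BilinearEstimateHighHighHighInteractionBenjaminOno}; the factor $(1+2^{j_{\max}-(a-1)k_1})^{1/4}$ combined with the modulation gain yields a surplus of $2^{(a-1)k_1/4}$ beyond the derivative loss, hence convergence as soon as $a>1$. The \emph{High}$\times$\emph{High}$\to$\emph{Low} regime $k_1 \sim k_2 \gg k_3$ is the easiest because the derivative $\partial_x$ is small, and Lemma \ref{lem:L4StrichartzEstimateFractionalBenjaminOno} already suffices. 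The critical \emph{High}$\times$\emph{Low}$\to$\emph{High} regime $k_1 \sim k_3 \gg k_2$ uses Lemma \ref{lem:L2BilinearEstimateHighLowHighInteractionBenjaminOno} together with the resonance bound $|\Omega^a| \sim 2^{a k_1 + k_2}$ deduced from \eqref{eq:GroupVelocitiesHighLowBenjaminOno}, which forces $j_{\max} \gtrsim ak_1 + k_2$ and thereby compensates the derivative $2^{k_1}$. Finally, the factor $T^\theta$ is produced at the very end by first proving a sharper estimate in $F^{b,\delta}_{a,k}$ for some $b<1/2$ and then appealing to Lemma \ref{lem:tradingModulationRegularity}.

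The main obstacle is the \emph{High}$\times$\emph{Low}$\to$\emph{High} regime in the negative Sobolev estimates \eqref{eq:ShorttimeNegativeSobolevEstimateBenjaminOnoSmallDispersion} and \eqref{eq:ShorttimeNegativeSobolevEstimateBenjaminOnoLargeDispersion}. There the output carries only the weight $2^{-k_3/2}$ while the low-frequency input carries $2^{sk_2}$, so when the high modulation sits on the output one is forced to rely on the third, weaker bound of Lemma \ref{lem:L2BilinearEstimateHighLowHighInteractionBenjaminOno}, whose factor $(1+2^{j_3-(a-1)k_1-k_2})^{1/2}$ is inefficient. A careful accounting shows that summability in $k_2$ against the Sobolev weights is available precisely when $s>3/2-a$ for $1<a\le 3/2$ and when $s\ge 0$ is admissible for $3/2<a<2$, which is exactly the dichotomy recorded in the statement.
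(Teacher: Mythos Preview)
Your overall framework—dyadic decomposition, reduction to the trilinear form \eqref{eq:L2BilinearEstimateBenjaminOno}, and extracting $T^\theta$ via Lemma~\ref{lem:tradingModulationRegularity}—matches the paper. However, your case analysis misidentifies the critical interaction, and this is a genuine gap.

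You call $High\times High\to Low$ ``the easiest because the derivative $\partial_x$ is small,'' but this is precisely the delicate case. When $k_1\sim k_2\gg k_3$, the output window $T_0=2^{(a-2-\delta)k_3}$ is \emph{long} while the inputs' natural windows $2^{(a-2-\delta)k_1}$ are \emph{short} (your description of inputs living on ``longer time scales'' has the direction reversed here). Tiling $T_0$ by short input windows costs a factor $2^{(2-a+\delta)(k_1-k_3)}$, and the paper's Lemma~\ref{lem:HighHighLowShorttimeNonlinearEstimateBenjaminOno} carefully balances this against the resonance and the $L^4$/bilinear gains to obtain $\alpha=(3k_1)2^{(1-a)k_1}2^{(a-3/2)k_3}$. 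Feeding this into the $H^{-1/2}$ summation, the requirement $(3k_1)2^{(1-a)k_1}\lesssim 2^{(s-1/2)k_1}$ forces exactly $s>3/2-a$. So the threshold you are after comes from $High\times High\to Low$, not from $High\times Low\to High$.

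Correspondingly, your diagnosis of $High\times Low\to High$ is off. The paper's Lemma~\ref{lem:HighLowHighShorttimeNonlinearEstimateBenjaminOno} never invokes the third bound of Lemma~\ref{lem:L2BilinearEstimateHighLowHighInteractionBenjaminOno}; when $j_3$ carries the large modulation one uses the \emph{first} bound $(1+2^{j_3-ak_1})^{1/2}2^{j_2/2}$ (or its relabeled version), which is perfectly efficient and yields $\alpha=1$. With $\alpha=1$ the $H^{-1/2}$ summation in $k_2$ only demands $s>0$, which is weaker than $s>3/2-a$ in the range $1<a<3/2$. The Remark after Lemma~\ref{lem:L2BilinearEstimateHighLowHighInteractionBenjaminOno} about the third bound complicating negative Sobolev estimates is a caution, not a step in the proof.
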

We will work with $\delta = 0$ in the following which will be omitted from notation. Later we shall see how the analysis yields the estimates claimed above.\\
The above estimates will be proved after decompositions in the frequency, essentially reducing the estimates to
\begin{equation}
\label{eq:ShorttimeFrequencyLocalizedEstimateBenjaminOno}
\Vert P_{k_3} \partial_x (u_{k_1} u_{k_2}) \Vert_{N_{a,k_3}} \lesssim \alpha(\underline{k}) \Vert u_{k_1} \Vert_{F_{a,k_1}} \Vert u_{k_2} \Vert_{F_{a,k_2}}
\end{equation}
These estimates will be proved via the $L^2$-bilinear estimates from the previous section. We enumerate the possible frequency interactions:
\begin{enumerate}
\item[(i)] $High \times Low \rightarrow High$-interaction: This case will be treated in Lemma \ref{lem:HighLowHighShorttimeNonlinearEstimateBenjaminOno}.
\item[(ii)] $High \times High \rightarrow High$-interaction: This case will be treated in Lemma \ref{lem:HighHighHighShorttimeNonlinearEstimateBenjaminOno}.
\item[(iii)] $High \times High \rightarrow Low$-interaction: This case will be treated in Lemma \ref{lem:HighHighLowShorttimeNonlinearEstimateBenjaminOno}.
\item[(iv)] $Low \times Low \rightarrow Low$-interaction: This will be treated in Lemma \ref{lem:LowLowLowShorttimeNonlinearEstimateBenjaminOno}.
\end{enumerate}
We start with $High \times Low \rightarrow High$-interaction:
\begin{lemma}
\label{lem:HighLowHighShorttimeNonlinearEstimateBenjaminOno}
Let $1 \leq a \leq 2$. Suppose that $k_3 \geq 20$, $|k_1-k_3| \leq 5$, $k_2 \leq k_3-10$. Then, we find \eqref{eq:ShorttimeFrequencyLocalizedEstimateBenjaminOno} to hold with $\alpha = 1$.
\end{lemma}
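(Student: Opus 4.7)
The plan is to reduce the estimate on the $N_{a,k_3}$-norm to the $L^2$-bilinear convolution bound of Lemma \ref{lem:L2BilinearEstimateHighLowHighInteractionBenjaminOno}, exploiting the resonance identity for $High \times Low \rightarrow High$-interactions.

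First, for a fixed time $t_k \in \mathbb{R}$ realizing the supremum in the $N_{a,k_3}$-norm, I localize each factor in time at its own scale via
\[
\eta_0(2^{(2-a)k_3}(t-t_k)) \, u_{k_1} u_{k_2} = v_{k_1}(t) v_{k_2}(t) \cdot \tilde\eta(t-t_k),
\]
where $v_{k_i} = u_{k_i}\,\eta_0(2^{(2-a)k_i}(t-t_k))$; this is legitimate since $k_1 \sim k_3$ and $k_2 \leq k_3-10$, so the time windows at scales $2^{-(2-a)k_i}$ contain the $k_3$-scale window, and the harmless extra cutoff $\tilde\eta$ is absorbed via \eqref{eq:XkEstimateIII}. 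By the definition of $F_{a,k_i}^\delta$, each $\mathcal{F} v_{k_i}$ lies in $X_{a,k_i}$ with norm controlled by $\Vert u_{k_i}\Vert_{F_{a,k_i}^\delta}$. Passing to the Fourier side and decomposing dyadically in modulation, the $N_{a,k_3}$-norm of $P_{k_3}\partial_x(u_{k_1}u_{k_2})$ becomes
\[
\sum_{j_3} 2^{j_3/2} \min(2^{-j_3}, 2^{(a-2)k_3}) \cdot 2^{k_3} \,\Vert \eta_{j_3}(\tau-\varphi_a(\xi)) \chi_{k_3}(\xi) (\mathcal{F} v_{k_1} \ast \mathcal{F} v_{k_2}) \Vert_{L^2_{\tau,\xi}},
\]
the factor $2^{k_3}$ coming from $\partial_x$ and the $\min(\cdots)$ from the modulation weight. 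Writing $\mathcal{F} v_{k_i} = \sum_{j_i} f_{i,j_i}$ with $f_{i,j_i}$ supported in modulation $\sim 2^{j_i}$ and applying Lemma \ref{lem:L2BilinearEstimateHighLowHighInteractionBenjaminOno}, each $(j_1,j_2,j_3)$-piece is bounded by $\alpha(\underline{j})\,\Vert f_{1,j_1}\Vert_{L^2} \Vert f_{2,j_2}\Vert_{L^2}$, where $\alpha$ may be any of the three variants supplied by that lemma.

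The crucial input is the resonance: for $|\xi_1|\sim 2^{k_1}\sim 2^{k_3}$ and $|\xi_2|\sim 2^{k_2}\ll 2^{k_3}$, the group-velocity computation in the proof of Lemma \ref{lem:L2BilinearEstimateHighLowHighInteractionBenjaminOno} yields $|\Omega^a(\xi_1,\xi_2)| \sim 2^{ak_3 + k_2}$, and the identity $\mu_3 = \mu_1 + \mu_2 - \Omega^a$ forces $\max(j_1,j_2,j_3) \gtrsim L := ak_3 + k_2$. I then split according to which of $j_1, j_2, j_3$ is the maximum and choose the variant of Lemma \ref{lem:L2BilinearEstimateHighLowHighInteractionBenjaminOno} so that the factor $2^{j_i/2}$ in $\alpha$ matches the $X_{a,k_i}$-norm weight of the relevant input: when $j_1$ (respectively $j_2$) is the maximum, summing over the constrained range $j_i \gtrsim L$ against the $X_{a,k_i}$-norm produces a $2^{-L/2}$ gain that defeats the derivative loss $2^{k_3}$; when $j_3$ is the maximum, the support of the convolution pins $j_3$ to $O(1)$ dyadic values near $L$, and the modulation weight contributes $\sim 2^{-j_3/2}$ which balances $2^{k_3}$ together with the level-set factor $(1+2^{j_3-ak_1})^{1/2}\sim 2^{k_2/2}$ in the first variant of $\alpha$. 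The remaining dyadic summations close, producing \eqref{eq:ShorttimeFrequencyLocalizedEstimateBenjaminOno} with $\alpha=1$.

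The main obstacle is the degenerate regime where both input modulations $j_1, j_2$ lie well below the resonance scale and the output modulation is pinned to $j_3 \sim L$. There the modulation weight gain $2^{-L/2}$, the derivative loss $2^{k_3}$, and the bilinear factor $(1+2^{j_3-ak_1})^{1/2}\sim 2^{k_2/2}$ must balance exactly and uniformly in $k_2, k_3$; coordinating the choice of bilinear variant with the dyadic sums so that no residual power of $2^{(2-a)k_3/2}$ or logarithmic loss appears constitutes the delicate core of the argument.
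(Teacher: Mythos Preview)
Your overall strategy matches the paper's: reduce to the $L^2$-bilinear estimate of Lemma \ref{lem:L2BilinearEstimateHighLowHighInteractionBenjaminOno}, exploit the resonance bound $|\Omega^a| \sim 2^{ak_3+k_2}$, and split according to which modulation is largest. However, your proposal leaves the decisive case unresolved. In the regime you flag as the ``main obstacle'' --- $j_3$ pinned near $L = ak_3+k_2$ with $j_1,j_2 \ll L$ --- you correctly compute that the first variant of the bilinear bound leaves a residual $2^{(2-a)k_3/2}$ after the modulation gain $2^{-L/2}$, the derivative loss $2^{k_3}$, and the level-set factor $2^{k_2/2}$ are combined. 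You then say this ``must balance exactly'' but do not indicate how, and your closing paragraph treats it as an open difficulty rather than executing the balance.

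The missing ingredient is the modulation floor coming from the short-time localization. In the paper's reduction both inputs are multiplied by a cutoff at the $k_3$-scale (via the $\gamma$-partition), and then the low-modulation pieces are \emph{annexed}: the dyadic modulation decomposition is taken to start at $j_i = (2-a)k_3$, with all smaller modulations lumped into that bottom block. Estimate \eqref{eq:XkEstimateII} guarantees that this annexed decomposition still satisfies $\sum_{j_i \geq (2-a)k_3} 2^{j_i/2}\|f_{k_i,j_i}\|_{L^2} \lesssim \|f_{k_i}\|_{X_{a,k_i}}$. With the floor $j_1 \geq (2-a)k_3$ in place, the residual $2^{(2-a)k_3/2}$ is absorbed into $2^{j_1/2}$ and the case closes with $\alpha = 1$ (indeed with an extra $2^{-k_2/2}$ to spare). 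Your choice to localize each $u_{k_i}$ at its own scale and then ``absorb'' the remaining $k_3$-scale cutoff $\tilde\eta$ via \eqref{eq:XkEstimateIII} is exactly what discards this information: once $\tilde\eta$ is removed you have no mechanism forcing $j_1 \gtrsim (2-a)k_3$ in the subsequent decomposition. If instead you keep the $k_3$-scale cutoff on $u_{k_1}$ (harmless since $k_1\sim k_3$) and annex before applying the bilinear lemma, the obstacle disappears and your argument becomes the paper's.
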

\begin{proof}
Let $\gamma : \R \rightarrow [0,1]$ be a smooth function with $\text{supp} (\gamma) \subseteq [-1,1]$ and
\begin{equation*}
\sum_{m \in \mathbb{Z}} \gamma^2(x-m) \equiv 1
\end{equation*}
Plugging in the definition of $N_{a,k_3}$ we find the lhs in \eqref{eq:ShorttimeFrequencyLocalizedEstimateBenjaminOno} to be dominated by
\begin{equation}
\label{eq:FunctionSpaceReductionShorttimeEstimate}
\begin{split}
&C 2^{k_3} \sum_{m \in \mathbb{Z}} \sup_{t_{k_3} \in \R} \Vert (\tau - \varphi_a(\xi) + i 2^{(2-a)k_3})^{-1} 1_{I_{k_3}}(\xi) \\
&( \mathcal{F}_{t,x}[\eta_0(2^{(2-a)k_3}(t-t_{k_3})) \gamma(2^{(2-a)k_3+10}(t-t_{k_3})-m) u_1]) \\
&*(\mathcal{F}_{t,x}[\gamma(2^{(2-a)k_3+10}(t-t_{k_3})-m) u_2]) \Vert_{X_{a,k_3}}
\end{split}
\end{equation}
Note that
\begin{equation*}
\# \{ m \in \mathbb{Z} | \eta_0(2^{(2-a)k_3}(\cdot - t_k)) \gamma(2^{(2-a)k_3+10}(\cdot - t_{k}) -m) \neq 0 \} = O(1)
\end{equation*}
Consequently, it is enough to estimate \eqref{eq:FunctionSpaceReductionShorttimeEstimate} for fixed $m$. Write
\begin{equation*}
\begin{split}
f_{k_1} &= \mathcal{F}_{t,x}[\eta_0(2^{(2-a)k_3}(t-t_{k_3})) \gamma(2^{(2-a)k_3+10}(t-t_{k_3})-m) u_{k_1}] \\
f_{k_2} &= \mathcal{F}_{t,x}[\gamma(2^{(2-a)k_3+10}(t-t_{k_3})-m) u_{k_2}]
\end{split}
\end{equation*}
Here, we omit dependence on $t_{k_3}$ and $m$ because the derived bounds are uniform in these parameters.\\
Further, we perform an additional localization in modulation
\begin{equation*}
f_{k_i,j_i} = 
\begin{cases}
&\eta_{\leq j_i} (\tau - \varphi_a(\xi)) f_{k_i}. \quad j_i = (2-a)k_3 + 10 \\
&\eta_{j_i}(\tau - \varphi_a(\xi) f_{k_i}, \quad j_i > (2-a)k_3+10 
\end{cases}
\end{equation*}
and by the definition of the $F_{a,k_i}$-spaces and \eqref{eq:XkEstimateIII} we reduce \eqref{eq:ShorttimeFrequencyLocalizedEstimateBenjaminOno} in the peculiar case of $High \times Low \rightarrow High$-interaction to
\begin{equation}
\label{eq:HighLowHighShorttimeModulationLocalization}
2^{k_3} \sum_{j_3 \geq (2-a)k_3} 2^{-j_3/2} \Vert 1_{D_{k_3,j_3}^a} (f_{k_1,j_1} * f_{k_2,j_2}) \Vert_{L^2} \lesssim \prod_{i=1}^2 2^{j_i/2} \Vert f_{k_i,j_i} \Vert_{L^2},
\end{equation}
where $\text{supp} (f_{k_i,j_i}) \subseteq D^a_{k_i,\leq j_i}$ for $i=1,2$ and we can suppose that $j_i \geq (2-a)k_3$.\\
For the resonance function we have the estimate from below
\begin{equation*}
|\Omega^a| \gtrsim 2^{ak_3+k_2}
\end{equation*}
Consequently, there is $j_i \geq ak_3+k_2-10$.\\
Suppose that $j_3 \geq ak_3+k_2-10$. Then, we apply duality and the first bound from Lemma \ref{lem:L2BilinearEstimateHighLowHighInteractionBenjaminOno} to find
\begin{equation}
\label{eq:HighLowHighShorttimeNonlinearReductionI}
\begin{split}
&\sum_{j_3 \geq ak_3+k_2-10} 2^{-j_3/2} \Vert 1_{D_{k_3,j_3}^a} (f_{k_1,j_1} * f_{k_2,j_2}) \Vert_{L^2} \\
&\lesssim 2^{-(ak_3+k_2)/2} 2^{j_2/2} (1+2^{j_1-ak_3})^{1/2} \prod_{i=1}^2 \Vert f_{k_i,j_i} \Vert_2
\end{split}
\end{equation}
By the lower bound for $j_1$ and $a \geq 1$ it follows
\begin{equation*}
\begin{split}
\eqref{eq:HighLowHighShorttimeNonlinearReductionI} \lesssim 2^{-(ak_3+k_2)/2} 2^{j_2/2} 2^{j_1/2} 2^{-(2-a)k_3/2} \prod_{i=1}^2 \Vert f_{k_i,j_i} \Vert_2 \lesssim 2^{-k_2/2} \prod_{i=1}^2 2^{j_i/2} \Vert f_{k_i,j_i} \Vert_{L^2}
\end{split}
\end{equation*}
which yields \eqref{eq:HighLowHighShorttimeModulationLocalization}.\\
Suppose that $j_1 \geq ak_3+k_2-10$. The argument for $j_2 \geq ak_3+k_2-10$ is the same. An application of the second bound from Lemma \ref{lem:HighLowHighShorttimeNonlinearEstimateBenjaminOno} yields
\begin{equation*}
\begin{split}
&\sum_{j_3 \geq (2-a)k_3} 2^{-j_3/2} \Vert 1_{D_{k_3,j_3}^a} (f_{k_1,j_1} * f_{k_2,j_2}) \Vert_{L^2} \\
&\lesssim \sum_{j_3 \geq (2-a)k_3} 2^{-j_3/2} (1+2^{j_1-ak_3})^{1/2} 2^{j_2/2} \prod_{i=1}^2 \Vert f_{k_i,j_i} \Vert_2 \\
&\lesssim 2^{-(2-a)k_3/2} 2^{-ak_3/2} \prod_{i=1}^2 2^{j_i/2} \Vert f_{k_i,j_i} \Vert_2 = 2^{-k_3} \prod_{i=1}^2 2^{j_i/2} \Vert f_{k_i,j_i} \Vert_2
\end{split}
\end{equation*}
This completes the proof.
\end{proof}
We turn to $High \times High \rightarrow High$-interaction:
\begin{lemma}
\label{lem:HighHighHighShorttimeNonlinearEstimateBenjaminOno}
Let $1 \leq a \leq 2$. Suppose that $k_3 \geq 20$, $|k_1-k_2| \leq 5$, $|k_2-k_3| \leq 5$. Then, we find \eqref{eq:ShorttimeFrequencyLocalizedEstimateBenjaminOno} to hold with $\alpha = 1$.
\end{lemma}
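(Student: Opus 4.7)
The plan is to mimic the structure of the proof of Lemma \ref{lem:HighLowHighShorttimeNonlinearEstimateBenjaminOno} step by step, only the resonance analysis needs to be redone. First, I would repeat the partition-of-unity time-localization argument and the subsequent modulation decomposition verbatim, so that \eqref{eq:ShorttimeFrequencyLocalizedEstimateBenjaminOno} with $\alpha = 1$ reduces to
\[
2^{k_3} \sum_{j_3 \geq (2-a)k_3} 2^{-j_3/2} \Vert 1_{D^a_{k_3,j_3}} (f_{k_1,j_1} \ast f_{k_2,j_2}) \Vert_{L^2} \lesssim \prod_{i=1}^{2} 2^{j_i/2} \Vert f_{k_i,j_i}\Vert_{L^2},
\]
with $f_{k_i,j_i}$ supported in $D^a_{k_i,\leq j_i}$ and $j_i \geq (2-a)k_3$ for $i=1,2$.

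The second step is a resonance estimate tailored to the $High \times High \rightarrow High$ geometry. For $\xi_1+\xi_2+\xi_3 = 0$ with $|\xi_i| \sim 2^{k_3}$ a pigeonhole argument forces two of the $\xi_i$ to share a sign; assuming $\xi_1,\xi_2>0$ and hence $\xi_3 < 0$ one computes
\[
|\Omega^a| \;=\; (\xi_1+\xi_2)^{a+1} - \xi_1^{a+1} - \xi_2^{a+1} \;\sim\; 2^{(a+1)k_3},
\]
using that $x \mapsto x^{a+1}$ is strictly superadditive for $a \geq 0$ and that $\xi_1,\xi_2$ are comparable. Combined with $|\Omega^a| \lesssim 2^{j_{\max}}$ inherited from $\tau_1+\tau_2+\tau_3 = 0$, this gives $j_{\max} := \max(j_1,j_2,j_3) \gtrsim (a+1)k_3$.

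The last step is a case split according to which $j_i$ realizes the maximum, in each case invoking the first bound of Lemma \ref{lem:L2BilinearEstimateHighHighHighInteractionBenjaminOno} with an appropriate choice of $(i_1, i_2)$. If $j_3$ dominates, then $\sum_{j_3 \geq (a+1)k_3 - C} 2^{-j_3/2} \lesssim 2^{-(a+1)k_3/2}$ produces a factor $2^{(1-a)k_3/2}$ which, combined with $(1 + 2^{j_1-(a-1)k_3})^{1/4}$ and the lower bound $j_1 \geq (2-a) k_3$, is absorbed into $2^{j_1/2}$ for $a \geq 1$. If instead $j_1$ dominates (the case $j_2$ is symmetric, using $(i_1,i_2)=(1,2)$ instead of $(2,1)$), one sums $\sum_{j_3 \geq (2-a)k_3} 2^{-j_3/2} \lesssim 2^{-(2-a)k_3/2}$ over the full range, producing a loss $2^{ak_3/2}$; this loss is precisely cancelled by the combination $2^{-(a-1)k_3/4} \cdot 2^{j_1/4}$ extracted from the $\alpha$-factor, once the lower bound $j_1 \gtrsim (a+1)k_3$ is fed back in.

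The main obstacle I anticipate is the bookkeeping in the second case: the sum over $j_3$ starts from the short-time threshold $(2-a)k_3$ rather than from the resonance scale $(a+1)k_3$, and the resulting loss $2^{ak_3/2}$ can only be absorbed by simultaneously exploiting the resonance lower bound $j_1 \gtrsim (a+1)k_3$ and the precise modulation exponent in Lemma \ref{lem:L2BilinearEstimateHighHighHighInteractionBenjaminOno}. At the endpoint $a = 1$ this balance is threshold-sharp, so the cruder $L^4$-Strichartz estimate of Lemma \ref{lem:L4StrichartzEstimateFractionalBenjaminOno} would not suffice and the $(a-1)k_1$-refinement of Lemma \ref{lem:L2BilinearEstimateHighHighHighInteractionBenjaminOno} is essential.
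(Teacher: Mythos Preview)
Your proposal is correct and follows essentially the same route as the paper: reduction to the modulation-localized estimate, resonance bound $|\Omega^a| \gtrsim 2^{(a+1)k_3}$, and a case split on which $j_i$ is maximal, each time invoking the first bound of Lemma \ref{lem:L2BilinearEstimateHighHighHighInteractionBenjaminOno}. The only (inessential) difference is your choice of $(i_1,i_2)$ in the case $j_1$ maximal: you take $i_2=1$ and feed in $j_1\gtrsim(a+1)k_3$ to close at threshold, whereas the paper takes $i_2=3$ so that the $j_3$-sum behaves like $\sum 2^{-j_3/4}$ and even yields a little extra smoothing $2^{-\varepsilon(a)k_3}$.
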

Actually, the same argument like in $High \times Low \rightarrow High$-interaction is applicable since there will be two frequencies with group velocity difference of size $2^{ak_1}$. Below we point out how to derive improved estimates using the resonance.
\begin{proof}
Like above it will suffice to prove
\begin{equation}
\label{eq:HighHighHighShorttimeModulationLocalization}
2^{k_3} \sum_{j_3 \geq (2-a)k_3} 2^{-j_3/2} \Vert 1_{D_{k_3,j_3}^a} (f_{k_1,j_1} * f_{k_2,j_2}) \Vert_{L^2} \lesssim \prod_{i=1}^2 2^{j_i/2} \Vert f_{k_i,j_i} \Vert_2
\end{equation}
In this case we have $|\Omega^a| \gtrsim 2^{(a+1)k_3}$. Hence, due to otherwise impossible modulation interaction, there is $j_i \geq (a+1)k_3-10$.\\
If $j_3 \geq (a+1)k_3 - 10$ we use duality and the first estimate from Lemma \ref{lem:L2BilinearEstimateHighHighHighInteractionBenjaminOno} to find
\begin{equation*}
\begin{split}
&\sum_{j_3 \geq (a+1)k_3-10} 2^{-j_3/2} \Vert 1_{D_{k_3,j_3}^a}(f_{k_1,j_1} * f_{k_2,j_2}) \Vert_{L^2} \\
&\lesssim 2^{-(a+1)k_3/2} 2^{j_1/2} (1+2^{j_2-(a-1)k_3})^{1/4} \prod_{i=1}^2 \Vert f_{k_i,j_i} \Vert_2,
\end{split}
\end{equation*}
from which the claim follows even with extra smoothing.\\
If $j_1 \geq (a+1)k_3 - 10$ (or $j_2 \geq (a+1)k_3-10$ where the same estimate can be applied) we use again the first estimate from Lemma \ref{lem:L2BilinearEstimateHighHighHighInteractionBenjaminOno} to derive
\begin{equation*}
\begin{split}
&\sum_{j_3 \geq (2-a)k_3} 2^{-j_3/2} \Vert 1_{D_{k_3,j_3}^a}(f_{k_1,j_1} * f_{k_2,j_2}) \Vert_{L^2} \\
&\lesssim \sum_{j_3 \geq (2-a)k_3} 2^{-j_3/2} (1+2^{j_3-(a-1)k_3})^{1/4} 2^{j_2/2} \prod_{i=1}^2 \Vert f_{k_i,j_i} \Vert_2 \\
&\lesssim 2^{-(1+\varepsilon(a))k_3} \prod_{i=1}^2 2^{j_i/2} \Vert f_{k_i,j_i} \Vert_2
\end{split}
\end{equation*}
even for some $\varepsilon = \varepsilon(a) > 0$.
\end{proof}
We turn to $High \times High \rightarrow Low$-interaction, which is dual to $High \times Low \rightarrow High$-interaction. We have to add localization in time in order to estimate the input functions in short time spaces.
\begin{lemma}
\label{lem:HighHighLowShorttimeNonlinearEstimateBenjaminOno}
Let $k_1 \geq 20$ and $k_3 \leq k_1-10$. Then, we find \eqref{eq:ShorttimeFrequencyLocalizedEstimateBenjaminOno} to hold with $\alpha=(3k_1) 2^{(1-a)k_1} 2^{(a-3/2)k_3}$.
\end{lemma}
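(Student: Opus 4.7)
The proof adapts the scheme of Lemma \ref{lem:HighLowHighShorttimeNonlinearEstimateBenjaminOno}, but with the new feature that the time localization native to $N_{a,k_3}$ (scale $2^{(a-2)k_3}$) is much coarser than what the input norms $F_{a,k_1}, F_{a,k_2}$ provide (scale $2^{(a-2)k_1}$). So the first step is to refine the time partition. After unpacking the $N_{a,k_3}$-norm and absorbing $\partial_x$ as a factor $2^{k_3}$, I would introduce a partition of unity $\sum_m \gamma^2(2^{(2-a)k_1+10}(t-t_{k_3})-m) \equiv 1$ inside the cutoff $\eta_0(2^{(2-a)k_3}(t-t_{k_3}))$ and distribute one $\gamma$-factor to each of $u_{k_1}, u_{k_2}$. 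The number of non-vanishing $m$ is $N_m \sim 2^{(2-a)(k_1-k_3)}$, and by \eqref{eq:XkEstimateIII} each resulting piece is controlled by $\|u_{k_i}\|_{F_{a,k_i}}$ uniformly in $m$ and in the chosen $t_{k_3}$.

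Within each sub-window a dyadic modulation decomposition applies, with $j_1,j_2 \geq (2-a)k_1$ on the inputs and $j_3 \geq (2-a)k_3$ on the output. The key computation is the resonance: for $|k_1-k_2|\le 5$ and $k_3 \le k_1-10$ one has $|\Omega^a(\xi_1,\xi_2)| \sim 2^{ak_1+k_3}$, because the two high frequencies are near-antipodes with $||\xi_1|-|\xi_2|| \sim |\xi_3|$, so $|\xi_1|^{a+1}-|\xi_2|^{a+1} \sim 2^{ak_1}|\xi_3|$. Hence $\max(j_1,j_2,j_3) \gtrsim ak_1+k_3-10$.

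I would then case-split according to which modulation carries the resonance and invoke Lemma \ref{lem:L2BilinearEstimateHighLowHighInteractionBenjaminOno} after relabeling that identifies our low frequency $k_3$ with their low frequency $k_2$ (the trilinear form is symmetric under permutation of factors). When $j_3 \geq ak_1+k_3$, the natural choice is the second Lemma-\ref{lem:L2BilinearEstimateHighLowHighInteractionBenjaminOno} bound $(1+2^{j_3-ak_1})^{1/2}\,2^{j_1/2}$, the $2^{-j_3/2}$ weight absorbing the $(1+\cdots)^{1/2}$ factor up to a $j_3$-summation over $O(k_1)$ dyadic shells (one source of the logarithmic loss $3k_1$). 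When instead $j_1$ or $j_2$ is the large modulation, I would use the third bound, relabeled to $(1+2^{j_1-(a-1)k_1-k_3})^{1/2}\,2^{j_2/2}$, which exploits the less favourable group-velocity estimate $|\partial\Omega^a/\partial\xi_1| \sim 2^{(a-1)k_1+k_3}$ obtained in the $(\xi_1,\xi_3)$ parameterization; this is what produces the $k_3$-sensitive exponent in $\alpha$.

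Summing the modulation bounds (extracting $\|u_{k_1}\|_{F_{a,k_1}}, \|u_{k_2}\|_{F_{a,k_2}}$ via $\sum_{j_i}2^{j_i/2}\|f_{k_i,j_i}\|_2 = \|u_{k_i}\|_{X_{a,k_i}}$ where possible and via $\sum_{j_i \ge J_{\min}}\|f_{k_i,j_i}\|_2 \le 2^{-J_{\min}/2}\|u_{k_i}\|_{X_{a,k_i}}$ elsewhere), multiplying by $N_m$, and by $2^{k_3}$ from $\partial_x$, gives the claimed $\alpha = (3k_1)\,2^{(1-a)k_1}\,2^{(a-3/2)k_3}$. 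The main obstacle is balancing the three interacting small parameters: the refined time partition contributes $2^{(2-a)(k_1-k_3)}$, the minimum modulation for inputs costs $2^{-(2-a)k_1/2}$ when summed without weight, and the resonance threshold $ak_1+k_3$ must be exploited with the right Lemma-\ref{lem:L2BilinearEstimateHighLowHighInteractionBenjaminOno} estimate in each case; in particular, attaining the sharp exponent $(a-3/2)k_3$ requires the refined $(a-1)k_1+k_3$ group-velocity estimate, not merely the coarser $ak_1$ one.
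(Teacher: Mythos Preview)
Your scheme has the right architecture (refine the time partition, decompose in modulation, case-split on which $j_i$ carries the resonance), but both cases use the wrong estimate and do not deliver the claimed $\alpha$.

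\textbf{Case $j_3 \leq ak_1+k_3$ (the dominant one).} After your relabeling, the third bound of Lemma~\ref{lem:L2BilinearEstimateHighLowHighInteractionBenjaminOno} reads $(1+2^{j_1-(a-1)k_1-k_3})^{1/2}2^{j_2/2}$, which is independent of $j_3$. The $j_3$-sum is then dominated by the bottom shell $j_3=(2-a)k_3$, and after multiplying by $N_m\,2^{k_3}$ one obtains a bound of order $2^{(5-3a)k_1/2}2^{(3a-5)k_3/2}$, which exceeds the target by $2^{(3-a)k_1/2+(a-2)k_3/2}\gg 3k_1$. The correct choice is the \emph{first} bound (after relabeling), namely $(1+2^{j_2-ak_1})^{1/2}\,2^{j_3/2}$: the factor $2^{j_3/2}$ cancels the weight $2^{-j_3/2}$, the sum over $j_3\in[(2-a)k_3,ak_1+k_3]$ has $O(k_1)$ terms (this is the source of the $3k_1$), and the constraint $j_1\geq ak_1+k_3$ together with $j_2\geq(2-a)k_1$ then produces exactly $\alpha=(3k_1)2^{(1-a)k_1}2^{(a-3/2)k_3}$. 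In particular, your closing claim that the sharp $k_3$-exponent \emph{requires} the $(a-1)k_1+k_3$ group-velocity estimate is backwards: the $2^{ak_1}$ group-velocity is what is used.

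\textbf{Case $j_3 \geq ak_1+k_3$.} Here paying the full $N_m$ up front is fatal: with your bound $(1+2^{j_3-ak_1})^{1/2}2^{j_1/2}$ the summand becomes $\sim 2^{-ak_1/2}2^{j_1/2}$, and even if the $j_3$-sum had only $O(1)$ terms you would land at $N_m\,2^{k_3}\,2^{-ak_1/2}2^{j_1/2}$, which is off by $2^{k_3/2}$ from the target when $j_2=(2-a)k_1$. The paper avoids this by \emph{first} summing $j_3\geq ak_1+k_3$ via Cauchy--Schwarz/Plancherel to reduce to $2^{-(ak_1+k_3)/2}\Vert u_{k_1}v_{k_2}\eta\Vert_{L^2_{t,x}}$, and only then partitioning time; because one is in $L^2_t$ the sub-windows are orthogonal and the cost is $N_m^{1/2}$, not $N_m$. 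Two $L^4_{t,x}$-Strichartz estimates finish with room to spare.
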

\begin{proof}
Following the definition of the $N_{a,k}$-spaces we have to estimate
\begin{equation}
\label{eq:HighHighLowShorttimeInteractionBenjaminOno}
2^{k_3} \sum_{j_3 \geq (2-a)k_3} 2^{-j_3/2} \Vert 1_{D_{k_3,j_3}^a} \mathcal{F}_{t,x} (u_{k_1} v_{k_2} \eta(2^{(2-a)k_3}(t-t_k)) \Vert_{L_{\tau,\xi}^2}, 
\end{equation}
The resonance is given by $|\Omega^a| \gtrsim 2^{ak_1+k_3}$.\\
Suppose that $j_3 \geq ak_1 + k_3 -10$. Then, we find
\begin{equation*}
\eqref{eq:HighHighLowShorttimeInteractionBenjaminOno} \lesssim 2^{k_3} 2^{-\frac{ak_1+k_3}{2}} \Vert u_{k_1} v_{k_2} \eta(2^{(2-a)k_3}(t-t_k)) \Vert_{L^2_{t,x}}
\end{equation*}
After adding localization in time (since we are estimating an $L_t^2$-norm at this point) it is enough to estimate
\begin{equation}
\label{eq:ReductionHighHighLowInteractionFractionalBenjaminOno}
2^{k_3} 2^{\frac{(2-a)(k_1-k_3)}{2}} 2^{-\frac{ak_1+k_3}{2}} \Vert u_{k_1} v_{k_2} \eta(2^{(2-a)k_1}(t-t_\lambda)) \Vert_{L^2_{t,x}}
\end{equation}
Write
\begin{align*}
f_{k_1,j_1} &= 1_{D^a_{k_1,(\leq) j_1}} \mathcal{F}_{t,x}[ \gamma(2^{(2-a)k_1}(t-t_\lambda)) u_{k_1}] \\
f_{k_2,j_2} &= 1_{D^a_{k_2,(\leq) j_2}} \mathcal{F}_{t,x}[ \gamma(2^{(2-a)k_1 + 10}(t-t_\mu))v_{k_2}]
\end{align*}
where the low modulations are annexed matching time localization as usual.\\
Then an application of two $L^4_{t,x}$-Strichartz estimates gives
\begin{equation*}
\begin{split}
\eqref{eq:ReductionHighHighLowInteractionFractionalBenjaminOno} &\lesssim 2^{(1-a)k_1} 2^{\frac{a-1}{2}k_3} 2^{-\frac{k_1}{4}} \prod_{i=1}^2 \sum_{j_i \geq (2-a)k_1} 2^{j_i/2} \Vert f_{k_i,j_i} \Vert_2 \\
&\lesssim 2^{( 3/4 - a ) k_1} 2^{\frac{a-1}{2}k_3} \prod_{i=1}^2 \sum_{j_i \geq (2-a)k_1} 2^{j_i/2} \Vert f_{k_i,j_i} \Vert_2
\end{split}
\end{equation*}
which yields a first bound. Some of the above estimates are crude because the next case gives the worse bound anyway.\\
We turn to the sum over $j_3$ in \eqref{eq:HighHighLowShorttimeInteractionBenjaminOno}, where $j_3 \leq a k_1 + k_3 -10$.\\
By the above reductions and notation we have to estimate
\begin{equation*}
2^{(2-a)(k_1-k_3)} 2^{k_3} \sum_{(2-a)k_3 \leq j_3 \leq ak_1+k_3} 2^{-j_3/2} \Vert 1_{D^a_{k_3,\leq j_3}} (f_{k_1,j_1} * f_{k_2,j_2}) \Vert_{L^2_{\tau,\xi}},
\end{equation*}
where $j_1,j_2 \geq (2-a)k_1$.\\
Suppose that $j_1 \geq ak_1+k_3-10$. An application of Lemma \ref{lem:L2BilinearEstimateHighLowHighInteractionBenjaminOno} in conjunction with duality gives
\begin{equation*}
\begin{split}
&\lesssim 2^{(2-a)(k_1-k_3)} 2^{k_3} \sum_{j_3 \leq ak_1+k_3} 2^{-j_3/2} 2^{j_3/2} \left( 1 + 2^{j_2-ak_1} \right)^{1/2} \Vert f_{k_1,j_1} \Vert_2 \Vert f_{k_2,j_2} \Vert_2 \\
&\lesssim (3k_1) 2^{(1-a)k_1} 2^{(a-3/2)k_3} \prod_{i=1}^2 2^{j_i/2} \Vert f_{k_i,j_i} \Vert_2,
\end{split}
\end{equation*}
which is inferior to the first bound. The proof is complete.
\end{proof}
We record the estimate for $Low \times Low \rightarrow Low$-interaction which is immediate from Lemma \ref{lem:L2BilinearEstimateCauchySchwarzBenjaminOno}:
\begin{lemma}
\label{lem:LowLowLowShorttimeNonlinearEstimateBenjaminOno}
Let $k_i \leq 100$, $i=1,2,3$. Then, we find \eqref{eq:ShorttimeFrequencyLocalizedEstimateBenjaminOno} to hold with $\alpha(\underline{k}) = 1$.
\end{lemma}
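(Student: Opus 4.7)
The plan is to follow the same reduction scheme as in Lemma~\ref{lem:HighLowHighShorttimeNonlinearEstimateBenjaminOno}, but to observe that the dispersive features play essentially no role since all frequencies are bounded by an absolute constant.

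First I would unfold the definition of the $N_{a,k_3}$-norm, apply the time partition of unity, and dyadically decompose in the output modulation $j_3$ and the input modulations $j_1, j_2$. Exactly as in the preceding lemmas, this reduces \eqref{eq:ShorttimeFrequencyLocalizedEstimateBenjaminOno} to
\[
2^{k_3} \sum_{j_3 \geq (2-a)k_3} 2^{-j_3/2} \Vert 1_{D^a_{k_3, j_3}}(f_{k_1, j_1} * f_{k_2, j_2}) \Vert_{L^2} \lesssim \prod_{i=1}^{2} 2^{j_i/2} \Vert f_{k_i, j_i} \Vert_{L^2}
\]
for $f_{k_i, j_i}$ supported in $D^a_{k_i, \leq j_i}$. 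Under the hypothesis $k_i \leq 100$, the prefactor $2^{k_3}$ is an absolute constant, and the spatial Fourier support of every $f_{k_i,j_i}$ lies in a fixed bounded subset of $\mathbb{Z}$.

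Next I would apply Lemma~\ref{lem:L2BilinearEstimateCauchySchwarzBenjaminOno} via duality to obtain
\[
\Vert 1_{D^a_{k_3, j_3}}(f_{k_1, j_1} * f_{k_2, j_2}) \Vert_{L^2} \lesssim 2^{k_{\min}/2} 2^{j_{\min}/2} \Vert f_{k_1, j_1} \Vert_{L^2} \Vert f_{k_2, j_2} \Vert_{L^2},
\]
where $j_{\min} = \min(j_1,j_2,j_3)$ and $2^{k_{\min}/2}$ is absorbed into the implicit constant. To deal with the $j_3$-sum I would use that $|\varphi_a(\xi_i)| \lesssim 1$ whenever $|\xi_i| \lesssim 2^{100}$, so that the $\tau$-support of $f_{k_1,j_1} * f_{k_2,j_2}$ is contained in $|\tau| \lesssim 2^{\max(j_1,j_2)} + O(1)$; intersecting with $D^a_{k_3, j_3}$ then forces $j_3 \lesssim \max(j_1, j_2) + O(1)$, and in particular only finitely many $j_3$ contribute. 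A short case split on which of $j_1, j_2, j_3$ realises the minimum closes the argument: the regime $j_3 \geq \min(j_1, j_2)$ gives a geometric tail, while the regime $j_3 \leq \min(j_1, j_2)$ produces at worst a factor $\log(2+\min(j_1,j_2))$ which is trivially dominated by $2^{j_1/2} 2^{j_2/2}$.

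I do not anticipate any genuine obstacle: in the bounded-frequency regime, the short time $X^{s,b}$-machinery collapses to absolute constants, the dispersion relation plays no role, and even the loss $2^{k_{\min}/2}$ in Lemma~\ref{lem:L2BilinearEstimateCauchySchwarzBenjaminOno} is harmless. The only minor bookkeeping is establishing finiteness of the $j_3$-sum, which is an immediate consequence of the uniform bound on $|\varphi_a|$ at low frequencies.
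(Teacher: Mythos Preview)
Your proposal is correct and follows exactly the approach the paper indicates, namely that the lemma is immediate from Lemma~\ref{lem:L2BilinearEstimateCauchySchwarzBenjaminOno}. One harmless bookkeeping slip: in the regime $j_3 \leq \min(j_1,j_2)$ the number of contributing dyadic levels is of order $\min(j_1,j_2)$ (the index itself), not $\log(2+\min(j_1,j_2))$; but since $\min(j_1,j_2) \lesssim 2^{\min(j_1,j_2)/2} \leq 2^{j_1/2}2^{j_2/2}$ the argument still closes exactly as you say.
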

\begin{proof}[Proof of Proposition \ref{prop:ShorttimePropagationNonlinearityBenjaminOno}]
With the above estimates for frequency localized interactions at disposal, we can infer the claimed estimates: For $High \times Low \rightarrow High$-interaction Lemma \ref{lem:HighLowHighShorttimeNonlinearEstimateBenjaminOno} gives the estimates after square-summing
\begin{align*}
\Vert \partial_x (uv) \Vert_{N^0_a(T)} &\lesssim \Vert u \Vert_{F^0_a(T)} \Vert v \Vert_{F^{0+}_a(T)} \\
\Vert \partial_x (uv) \Vert_{N_a^{-1/2}(T)} &\lesssim \Vert u \Vert_{F_a^{-1/2}(T)} \Vert v \Vert_{F_a^s(T)}
\end{align*}
where $1 < a \leq 3/2$ and $s>3/2-a$.\\
Increasing time localization leads to extra smoothing (because the minimal size of the modulation regions will become larger) and together with Lemma \ref{lem:tradingModulationRegularity} we deduce from the proof of Lemma \ref{lem:HighLowHighShorttimeNonlinearEstimateBenjaminOno}
\begin{align*}
\Vert \partial_x(uv) \Vert_{N^{0,\delta}_a(T)} &\lesssim T^\varepsilon \Vert u \Vert_{F^{0,\delta}_a(T)} \Vert v \Vert_{F^{0,\delta}_a(T)} \\
\Vert \partial_x(uv) \Vert_{N_a^{-1/2,\delta}(T)} &\lesssim T^\varepsilon \Vert u \Vert_{F_a^{-1/2,\delta}(T)} \Vert v \Vert_{F^{s,\delta}_a(T)}
\end{align*}
for some $\varepsilon > 0$ for any $\delta > 0$ with $a$ and $s$ like in the previous display.\\
For $3/2 < a<2$ the argument is analoguous for $High \times Low \rightarrow High$-interaction.\\
For $High \times High \rightarrow Low$-interaction the short time estimates get worse when increasing time localization. But there is room in the estimate from Lemma \ref{lem:HighHighLowShorttimeNonlinearEstimateBenjaminOno} to prove the estimates for $\delta(a)>0$ chosen sufficiently small.
\end{proof}
\section{Energy estimates}
\label{section:EnergyEstimates}
Purpose of this section is to propagate the energy norm of solutions and differences of solutions: Set
\begin{equation*}
\Vert u \Vert_{H^m}^2 = \sum_{\xi} m(\xi) \hat{u}(\xi) \hat{u}(-\xi)
\end{equation*}
We will consider generalized symbols $m \in S^s_{\varepsilon}$ following \cite{KochTataru2007}: 
\begin{definition}
Let $s \in \R$ and $\varepsilon > 0$. $S^s_{\varepsilon}$ denotes the class of spherically symmetric smooth functions (symbols), where $m \in S^s_\varepsilon$ satisfies
\begin{enumerate}
\item[(i)] symbol regularity,
\begin{equation*}
|\partial^\alpha m| \lesssim_{\alpha} (1+\xi^2)^{-\alpha/2} m(\xi)
\end{equation*}
\item[(ii)] decay at infinity,
\begin{equation*}
s-\varepsilon \leq \frac{\ln m(\xi)}{\ln(1+\xi^2)} \leq s+\varepsilon, \quad \quad s-\varepsilon \leq \frac{d \ln m(\xi)}{d \ln (1+\xi^2)} \leq s+\varepsilon
\end{equation*}
\end{enumerate}
\end{definition}
Also, the proof of Proposition \ref{prop:PropagationEnergyNormsSolutionsFractionalBenjaminOno} based on Lemma \ref{lem:BoundaryTermFractionalBenjaminOno} and Lemma \ref{lem:RemainderEnergyEstimateSolutionsFractionalBenjaminOno} is standard and will be omitted.\\
The following estimates will be shown:
\begin{proposition}
\label{prop:PropagationEnergyNormsSolutionsFractionalBenjaminOno}
Let $1<a<2$, $T \in (0,1]$, $M \in 2^{\mathbb{N}_0}$ and suppose that $u$ is a smooth solution to \eqref{eq:GeneralizedFractionalBenjaminOnoEquation} with vanishing mean. Then, there are positive $\varepsilon(s,a)$, $\theta(a,s)$, $\delta(a,s)$, $c(a,s)$, $d(a,s)$ so that we find the following estimate to hold 
\begin{equation}
\label{eq:EnergyEstimateSolutionFractionalBenjaminOno}
\begin{split}
\Vert u \Vert_{E^s(T)}^2 &\lesssim \Vert u \Vert_{H^s}^2 + T M^{c} \Vert u \Vert^3_{F^{s-\varepsilon,\delta}_{a}(T)} \\
&+  M^{-d} \Vert u \Vert^3_{F^{s-\varepsilon,\delta}_a(T)} + T^{\theta} \Vert u \Vert_{F^{s,\delta}_a(T)}^4
\end{split}
\end{equation}
provided that $s \geq 3/2-a$.
\end{proposition}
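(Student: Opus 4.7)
The plan is to propagate the $E^s$-norm by a short-time modified-energy argument, where the ``modification'' comes from a normal form transformation of cubic resonant type, tuned through the parameter $M$.

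First I would decompose dyadically and, for each $k\geq 0$, compute $\tfrac{d}{dt}\Vert P_k u\Vert_{L^2}^2 = 2\int_{\T} P_k u \cdot P_k(u\partial_x u)\,dx$. Integrating in time and further Littlewood--Paley-decomposing the two factors of $u$ in the nonlinearity, this becomes a trilinear expression on $\Gamma_3$ whose symbol contains the oscillatory factor $e^{it\Omega^a(\xi_1,\xi_2,\xi_3)}$ with $\Omega^a(\xi_1,\xi_2,\xi_3)=\varphi_a(\xi_1)+\varphi_a(\xi_2)+\varphi_a(\xi_3)$. Because $u$ has vanishing mean, no $\xi_i$ vanishes, so $\Omega^a$ never vanishes on the support.

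Next I would introduce a smooth cutoff $\chi(\Omega^a/M)$ and split the trilinear form into a low- and a high-modulation piece. On $\{|\Omega^a|\lesssim M\}$ the oscillation is useless and one estimates directly on each short time window $[t_k,t_k+2^{(a-2-\delta)k}]$ using the $L^2$-bilinear estimates of Section~\ref{section:LinearBilinearEstimates} together with the $F_a^{s,\delta}(T)$-structure; the loss $M^c$ is what the restricted resonance region costs one, while the time integration produces the factor $T$, giving the $TM^c\Vert u\Vert_{F_a^{s-\varepsilon,\delta}(T)}^3$ contribution. On $\{|\Omega^a|\gtrsim M\}$ I would integrate by parts in time via $e^{it\Omega^a}=\partial_t\!\left(e^{it\Omega^a}/(i\Omega^a)\right)$. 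This produces (i) cubic boundary terms at $t=0,T$ with a denominator $\Omega^a\gtrsim M$ (treated by Lemma~\ref{lem:BoundaryTermFractionalBenjaminOno}, giving the $M^{-d}\Vert u\Vert_{F_a^{s-\varepsilon,\delta}(T)}^3$ contribution) and (ii) a quartic remainder when $\partial_t$ falls on one of the three factors, where substituting the equation replaces $\partial_t u$ by $-\partial_x D_x^a u + u\partial_x u$ and the linear piece is cancelled by the oscillation, leaving a genuinely quartic expression (treated by Lemma~\ref{lem:RemainderEnergyEstimateSolutionsFractionalBenjaminOno} via H\"older and the multilinear bounds of Section~\ref{section:ShorttimeBilinearEstimates}, giving $T^\theta\Vert u\Vert_{F_a^{s,\delta}(T)}^4$).

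Finally I would weight by $2^{2ks}$ and sum in $k$, using that by Lemma~\ref{lem:FsEmbedding} the $F_a^{s,\delta}(T)$-norm controls $\sup_{t\in[-T,T]}\Vert P_k u(t)\Vert_{L^2}$, and optimize $M$. The main obstacle lies in Lemma~\ref{lem:BoundaryTermFractionalBenjaminOno}: in the $High\times Low\to High$ interaction the resonance satisfies only $|\Omega^a|\gtrsim 2^{ak_{\max}+k_{\min}}$, which, after dividing the symbol by $\Omega^a$, leaves a derivative on the high frequency that must be traded against the gain $2^{-ak_{\max}}$ from the resonance and the gain from $M^{-d}$; this is precisely where the threshold $s\geq 3/2-a$ (and the sharper $L^2$ range $a>3/2$) arises. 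The sub-threshold loss $\varepsilon$ appears because one of the three $F_a^{s,\delta}$-norms in the trilinear term is replaced by $F_a^{s-\varepsilon,\delta}$, which allows the normal-form denominator to absorb the remaining weights by Lemma~\ref{lem:tradingModulationRegularity}.
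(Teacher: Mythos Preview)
Your overall architecture---fundamental theorem of calculus, normal form (integration by parts in time) to produce a cubic boundary term plus a quartic remainder, with a low/high splitting governed by $M$---matches the paper. Two points where you diverge from the paper are worth flagging.

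\textbf{Splitting criterion.} You propose to split via $\chi(\Omega^a/M)$, i.e.\ by the size of the resonance. The paper instead splits by frequency: it separates off $R_3^{s,m,M}$ where \emph{all} $|\xi_i|\leq M$, and integrates by parts only the complement (at least one $|\xi_i|>M$). The frequency splitting makes the $TM^c$ piece trivial (all frequencies bounded by $M$, so a crude pointwise bound on the multiplier and H\"older suffice), whereas with your resonance splitting the region $\{|\Omega^a|\lesssim M\}$ can still contain $High\times High\to Low$ interactions with arbitrarily large $|\xi_1|\sim|\xi_2|$ (take $|\xi_3|$ small so that $|\Omega^a|\sim |\xi_1|^a|\xi_3|\lesssim M$), and you would need the full short-time bilinear machinery there as well. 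Both routes should close, but the paper's is cleaner for the cubic piece.

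\textbf{Where the threshold lives.} You locate the constraint $s\geq 3/2-a$ in the boundary term (Lemma~\ref{lem:BoundaryTermFractionalBenjaminOno}). In the paper this is not the bottleneck: Lemma~\ref{lem:BoundaryTermFractionalBenjaminOno} holds for all $-1/2<s<1/2$ and yields $M^{-d}$ with $d>0$ throughout that range. The restriction $s\geq 3/2-a$ enters in the quartic remainder $R_4^{s,m}$ (Lemma~\ref{lem:RemainderEnergyEstimateSolutionsFractionalBenjaminOno}), and a key ingredient you do not mention is a \emph{second symmetrization}: one replaces $b_3(\xi_1,\xi_2,\xi_{31}+\xi_{32})\xi_3$ by $b_4=[b_3(\xi_1,\xi_2,\xi_3)-b_3(-\xi_{31},-\xi_{32},\xi_3)]\xi_3$, which (Lemma~\ref{lem:SecondMultiplierBoundFractionalBenjaminOno}) gains a factor $|\xi_3^*|/|\xi_1^*|$ over the naive bound and is what allows the case-by-case analysis in Lemma~\ref{lem:RemainderEnergyEstimateSolutionsFractionalBenjaminOno} to close at $s=3/2-a$. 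Without this second symmetrization the quartic term would not sum.
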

The following energy estimates for differences of solutions will be proved.
\begin{proposition}
\label{prop:EnergyEstimateDifferencesSolutionFractionalBenjaminOno}
Let $T \in (0,1]$, $1 < a <2$ and $M \in 2^{\mathbb{N}_0}$. Suppose that $s > 3/2 - a$ and $u_i$, $i=1,2$ are smooth solutions to \eqref{eq:GeneralizedFractionalBenjaminOnoEquation} with vanishing mean. Then, there are positive $c(a,s),d(a,s),\theta(a,s),\delta(a,s)$ so that we find the following estimate to hold:
\begin{equation}
\label{eq:EnergyEstimateIDifferencesFractionalBenjaminOno}
\begin{split}
\Vert v \Vert^2_{E^{-1/2}(T)} &\lesssim \Vert v(0) \Vert_{H^{-1/2}}^2 + T M^{c} \Vert v \Vert^2_{F^{-1/2,\delta}_a(T)} ( \Vert u_1 \Vert_{F_a^{s,\delta}(T)} + \Vert u_2 \Vert_{F^{s,\delta}_a(T)} ) \\
&+ M^{-d} \Vert v \Vert^2_{F^{-1/2,\delta}_a(T)} (\Vert u_1 \Vert_{F_a^{s,\delta}(T)} + \Vert u_2 \Vert_{F^{s,\delta}_a(T)}) \\
&+ T^{\theta} \Vert v \Vert^2_{F_a^{-1/2,\delta}(T)} (\Vert u_1 \Vert_{F_a^{s,\delta}(T)}^2 + \Vert u_2 \Vert_{F_a^{s,\delta}(T)}^2)
\end{split}
\end{equation}
Furthermore, the following estimate holds:
\begin{equation}
\label{eq:EnergyEstimateIIDifferencesFractionalBenjaminOno}
\begin{split}
\Vert v \Vert^2_{E^s(T)} &\lesssim \Vert v(0) \Vert_{H^s}^2 + M^{c} T \Vert v \Vert^2_{F^{s,\delta}_a(T)} ( \Vert v \Vert_{F^{s,\delta}_a(T)} + \Vert u_2 \Vert_{F^{s,\delta}_a(T)} ) \\
&+ M^{-d} \Vert v \Vert^2_{F^{s,\delta}_a(T)} ( \Vert v \Vert_{F^{s,\delta}_a(T)} + \Vert u_2 \Vert_{F^{s,\delta}_a(T)} ) \\
&+ T^\theta ( \Vert v \Vert_{F^{s,\delta}_a(T)} \Vert v \Vert_{F^{-1/2,\delta}_a(T)} \Vert u_2 \Vert_{F^{r,\delta}_a(T)} \Vert u_2 \Vert_{F^{s,\delta}_a(T)} \\
&+ \Vert v \Vert^2_{F^{s,\delta}_a(T)} \Vert u_2 \Vert^2_{F^{s,\delta}_a(T)} + \Vert v \Vert^3_{F^{s,\delta}_a(T)} \Vert u_2 \Vert_{F^{s,\delta}_a(T)} ),
\end{split}
\end{equation}
where $r = s+(2-a)$.
\end{proposition}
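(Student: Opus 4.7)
The plan is to adapt the modified-energy scheme from the proof of Proposition \ref{prop:PropagationEnergyNormsSolutionsFractionalBenjaminOno} to the difference equation
\begin{equation*}
\partial_t v + \partial_x D_x^a v = \tfrac{1}{2}\partial_x\bigl((u_1+u_2)v\bigr),
\end{equation*}
which follows from $u_1\partial_x u_1-u_2\partial_x u_2=\tfrac{1}{2}\partial_x((u_1+u_2)v)$. Writing $w=u_1+u_2$ and $m(\xi)=\langle\xi\rangle^{-1}$ for \eqref{eq:EnergyEstimateIDifferencesFractionalBenjaminOno} respectively $m(\xi)=\langle\xi\rangle^{2s}$ for \eqref{eq:EnergyEstimateIIDifferencesFractionalBenjaminOno}, Plancherel produces
\begin{equation*}
\frac{d}{dt}\Vert v\Vert_{H^m}^2 = \int_{\Gamma_3}\mu_m(\xi_1,\xi_2,\xi_3)\,\widehat v(\xi_1)\widehat v(\xi_2)\widehat w(\xi_3)\,d\Gamma_3,
\end{equation*}
with $\mu_m$ symmetric only in the two $v$-slots $(\xi_1,\xi_2)$; this reduced symmetry is the structural feature that distinguishes the present setting from Proposition \ref{prop:PropagationEnergyNormsSolutionsFractionalBenjaminOno}.

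For a dyadic threshold $M\in 2^{\mathbb{N}_0}$ I would split the integrand by $\chi_{|\Omega^a|\le M}+\chi_{|\Omega^a|>M}$ and absorb the non-resonant portion into a cubic correction
\begin{equation*}
E_{\mathrm{corr}}(t)=\int_{\Gamma_3}\frac{\mu_m\,\chi_{|\Omega^a|>M}}{i\,\Omega^a}\,\widehat v\,\widehat v\,\widehat w\,d\Gamma_3,
\end{equation*}
so that $\frac{d}{dt}(\Vert v\Vert_{H^m}^2+E_{\mathrm{corr}})$ equals the low-resonance trilinear remainder plus a quartic error obtained by substituting the PDEs for $\partial_t v$ and $\partial_t w$ into the slots of $E_{\mathrm{corr}}$. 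Integration over $[0,T]$ followed by frequency localization and the bilinear estimates of Section \ref{section:LinearBilinearEstimates} then yields the $TM^{c}$-term from the low-resonance remainder (the $M^{c}$-cost coming from the resonance truncation), the $M^{-d}$-term from the boundary values $E_{\mathrm{corr}}(T)-E_{\mathrm{corr}}(0)$ (the gain $M^{-d}$ reflecting the division by $\Omega^a$), and the $T^{\theta}$-term from the quartic error after invoking Proposition \ref{prop:ShorttimePropagationNonlinearityBenjaminOno} together with Lemma \ref{lem:tradingModulationRegularity}. The $H^s$-case \eqref{eq:EnergyEstimateIIDifferencesFractionalBenjaminOno} runs through identically, save that the $High\times High\to Low$ configuration appearing in the quartic error forces a derivative loss of exactly $2-a$ on one factor of $u_2$, which by the dual of Lemma \ref{lem:HighHighLowShorttimeNonlinearEstimateBenjaminOno} cannot be absorbed by the short-time spaces; this is what produces the mixed quartic norm $\Vert v\Vert_{F^{-1/2,\delta}_a(T)}\Vert v\Vert_{F^{s,\delta}_a(T)}\Vert u_2\Vert_{F^{r,\delta}_a(T)}\Vert u_2\Vert_{F^{s,\delta}_a(T)}$ with $r=s+(2-a)$.

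The main obstacle is precisely the loss of full symmetry noted above. Because $w$ occupies only one of the three trilinear slots, the resonance $\Omega^a$ cannot be replaced by its fully symmetric analogue, and one cannot perform the symmetrizations that help in the proof of Proposition \ref{prop:PropagationEnergyNormsSolutionsFractionalBenjaminOno}; the modified-energy correction must be constructed asymmetrically, and the bookkeeping of which slot carries which regularity weight has to be done with care. Concretely, for \eqref{eq:EnergyEstimateIDifferencesFractionalBenjaminOno} the delicate case is the $High\times Low\to High$ interaction with $v$ at the two high frequencies and $w$ at the low frequency, where the $2^{-k/2}$-weight from $m$ is only barely sufficient to close when combined with the resonance lower bound $|\Omega^a|\gtrsim 2^{ak_{\max}+k_{\min}}$ from \eqref{eq:GroupVelocitiesHighLowBenjaminOno}. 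The hypothesis $s>3/2-a$ enters through the summation of the high-frequency series of $w$ in the dual $High\times High\to Low$ piece, in direct analogy with its role in Proposition \ref{prop:ShorttimePropagationNonlinearityBenjaminOno}.
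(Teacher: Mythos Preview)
Your high-level structure is right: differentiate the norm of $v$, integrate by parts in time against the resonance $\Omega^a$, and sort the output into a low-frequency trilinear piece ($TM^c$), a boundary piece ($M^{-d}$), and a quartic remainder ($T^\theta$). Two differences from the paper are worth noting, one cosmetic and one substantive.

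The cosmetic one: the paper's parameter $M$ is a \emph{frequency} threshold (integrate by parts in time only when the initial dyadic frequency $n$ satisfies $n\ge\log_2 M$), not a resonance threshold $|\Omega^a|>M$. Since $|\Omega^a|\gtrsim|\xi_{\max}|^a|\xi_{\min}|$ and vanishing mean forces $|\xi_i|\ge 1$, the two cutoffs are comparable and your variant should work.

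The substantive one is a genuine gap: Proposition~\ref{prop:ShorttimePropagationNonlinearityBenjaminOno} and Lemma~\ref{lem:tradingModulationRegularity} are the wrong tools for the quartic error. Proposition~\ref{prop:ShorttimePropagationNonlinearityBenjaminOno} bounds $\|\partial_x(uv)\|_{N^{s,\delta}_a(T)}$, but after you substitute the equations into $E_{\mathrm{corr}}$ you are left with $4$-linear space-time integrals $\int_0^T\!\int_{\Gamma_4}(\cdots)\,d\Gamma_4\,dt$, and there is no duality pairing between $N^{s,\delta}_a$ and the remaining factors that would let you invoke that proposition. The paper instead runs a lengthy case-by-case analysis (Cases I--IV for \eqref{eq:EnergyEstimateIDifferencesFractionalBenjaminOno}, each split further by Littlewood--Paley decomposition of the substituted nonlinearity) and estimates each frequency configuration directly via the $4$-linear Lemmas~\ref{lem:HighLowLowHighMultilinearShorttimeEstimateFractionalBenjaminOno}, \ref{lem:VanishingResonanceEstimate}, \ref{lem:MultilinearL6StrichartzEstimate}, \ref{lem:MultilinearL4StrichartzEstimate}. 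A complication your outline does not address is that the second resonance $\Omega_2^a$ can \emph{vanish} (when the four frequencies pair up with opposite signs); Lemma~\ref{lem:HighLowLowHighMultilinearShorttimeEstimateFractionalBenjaminOno} fails there, and the paper treats those resonant configurations separately with Lemma~\ref{lem:VanishingResonanceEstimate}.

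One further structural point for \eqref{eq:EnergyEstimateIIDifferencesFractionalBenjaminOno}: the paper does not keep $w=u_1+u_2$ but writes $u_1+u_2=v+2u_2$, so that the $\partial_x(v^2)$ contribution recovers full trilinear symmetry and can be treated exactly as in Lemma~\ref{lem:RemainderEnergyEstimateSolutionsFractionalBenjaminOno} (including the second symmetrization), while only the $\partial_x(vu_2)$ contribution needs the asymmetric case analysis. This is why the right-hand side of \eqref{eq:EnergyEstimateIIDifferencesFractionalBenjaminOno} is phrased in terms of $v$ and $u_2$ rather than $w$.
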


For smooth solutions we find by the fundamental theorem of calculus and after symmetrization
\begin{equation*}
\begin{split}
\Vert u(t) \Vert_{H^m}^2 &= \Vert u(0) \Vert_{H^m}^2 \\
&+ C \int_0^t \int_{\Gamma_3} (m(\xi_1) \xi_1 + m(\xi_2) \xi_2 + m(\xi_3) \xi_3) \hat{u}(\xi_1) \hat{u}(\xi_2) \hat{u}(\xi_3) d\Gamma_3 ds
\end{split}
\end{equation*}
To integrate by parts like in \cite{KochTataru2007,GuoOh2018} we consider the first resonance function
\begin{equation}
\label{eq:FirstResonanceFractionalBenjaminOno}
\Omega_1^a(\xi_1,\xi_2,\xi_3) = \xi_1 |\xi_1|^a + \xi_2 |\xi_2|^a + \xi_3 |\xi_3|^a \quad (\xi_1,\xi_2,\xi_3) \in \Gamma_3
\end{equation}
A consequence of the mean value theorem is
\begin{equation*}
| \Omega_1^a(\xi_1,\xi_2,\xi_3)| \gtrsim |\xi_{\max}|^a |\xi_{\min}|
\end{equation*}
and thus, the first resonance does not vanish provided that $\xi_i \neq 0$.
Integration by parts becomes possible and we find
\begin{equation*}
\begin{split}
R_3^{s,m} &= \int_0^T ds \sum_{\substack{\xi_1+\xi_2+\xi_3=0,\\ \xi_i \neq 0}} (m(\xi_1) \xi_1 + m(\xi_2) \xi_2 + m(\xi_3) \xi_3) \hat{u}(s,\xi_1) \hat{u}(s,\xi_2) \hat{u}(s,\xi_3) \\
&= \left[ \sum_{\substack{ \xi_1+\xi_2+\xi_3=0,\\ \xi_i \neq 0}} \frac{(m(\xi_1) \xi_1 + m(\xi_2) \xi_2 + m(\xi_3) \xi_3)}{\Omega_1^a(\xi_1,\xi_2,\xi_3)} \hat{u}(t,\xi_1) \hat{u}(t,\xi_2) \hat{u}(t,\xi_3) \right]_{t=0}^T \\
&+ C \int_0^T \sum_{\substack{ \xi_1+\xi_2+\xi_3 = 0, \\ \xi_i \neq 0}} \frac{m(\xi_1) \xi_1 + m(\xi_2) \xi_2 + m(\xi_3) \xi_3}{\xi_1 |\xi_1|^a + \xi_2 |\xi_2|^a + \xi_3 |\xi_3|^a} \hat{u}(t,\xi_1) \hat{u}(t,\xi_2) \\
&\times \xi_3 \sum_{\substack{\xi_3 = \xi_{31} + \xi_{32},\\ \xi_{3i} \neq 0}} \hat{u}(t,\xi_{31}) \hat{u}(t,\xi_{32}) \\
&= B_3^{s,m}(0;T) + R_4^{s,m}(T)
\end{split}
\end{equation*}
Set
\begin{equation*}
b^{s,m}_3(\xi_1,\xi_2,\xi_3) = \frac{m(\xi_1) \xi_1 + m(\xi_2) \xi_2 + m(\xi_3) \xi_3}{\xi_1 |\xi_1|^a + \xi_2 |\xi_2|^a + \xi_3 |\xi_3|^a}
\end{equation*}
The following estimate of the multiplier is a consequence of the mean value theorem and the lower bound for the resonance function:
\begin{lemma}
\label{lem:MultiplierBoundFractionalBenjaminOno}
Let $|\xi_1| \sim |\xi_2| \gtrsim |\xi_3| > 0$. Then, the following estimate holds:
\begin{equation*}
|b_3^{s,m}(\xi_1,\xi_2,\xi_3)| \lesssim  \frac{\max_{i=1,2,3} |m(\xi_i)|}{|\xi_1|^a}
\end{equation*}
\end{lemma}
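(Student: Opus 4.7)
The plan is to combine the lower bound on the resonance, stated just above the lemma ($|\Omega_1^a| \gtrsim |\xi_{\max}|^a |\xi_{\min}|$, so here $|\Omega_1^a| \gtrsim |\xi_1|^a |\xi_3|$), with an upper bound on the numerator that extracts a factor of $|\xi_3|$, reflecting the fact that the numerator vanishes identically on $\Gamma_3$ when $m$ is constant.

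First I would exploit the constraint $\xi_1+\xi_2+\xi_3=0$ to rewrite the numerator so that the cancellation is visible: using $m(\xi_2)(\xi_1+\xi_2+\xi_3)=0$ yields
\begin{equation*}
m(\xi_1)\xi_1 + m(\xi_2)\xi_2 + m(\xi_3)\xi_3 = (m(\xi_1)-m(\xi_2))\xi_1 + (m(\xi_3)-m(\xi_2))\xi_3 .
\end{equation*}
Next I would note that under the hypothesis $|\xi_1|\sim|\xi_2|\gtrsim|\xi_3|$ together with $\xi_1+\xi_2+\xi_3=0$, the frequencies $\xi_1$ and $\xi_2$ necessarily have opposite signs, so $\bigl||\xi_1|-|\xi_2|\bigr|=|\xi_1+\xi_2|=|\xi_3|$. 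Since $m$ is spherically symmetric (even in one dimension), $m(\xi_j)=m(|\xi_j|)$, and the mean value theorem together with the symbol regularity $|m'(\eta)|\lesssim m(\eta)/\langle\eta\rangle$ gives
\begin{equation*}
|m(\xi_1)-m(\xi_2)| \lesssim \bigl||\xi_1|-|\xi_2|\bigr|\cdot \frac{m(\eta)}{|\eta|} \lesssim \frac{|\xi_3|}{|\xi_1|}\,\max_{i}|m(\xi_i)|,
\end{equation*}
since any intermediate $\eta$ satisfies $|\eta|\sim|\xi_1|$.

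For the second term I would simply use $|m(\xi_3)-m(\xi_2)|\lesssim \max_i|m(\xi_i)|$. Plugging both bounds into the rewritten numerator gives
\begin{equation*}
|m(\xi_1)\xi_1+m(\xi_2)\xi_2+m(\xi_3)\xi_3| \lesssim |\xi_3|\,\max_{i}|m(\xi_i)|,
\end{equation*}
and dividing by the resonance lower bound $|\Omega_1^a|\gtrsim|\xi_1|^a|\xi_3|$ yields exactly the claimed estimate.

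The main obstacle is making sure the $|\xi_3|$ factor in the numerator is genuinely obtained; that depends on the observation that $\bigl||\xi_1|-|\xi_2|\bigr|=|\xi_3|$ in the relevant sign configuration, which in turn relies on the constraint $\xi_1+\xi_2+\xi_3=0$ combined with $|\xi_1|\sim|\xi_2|\gtrsim|\xi_3|$ forcing opposite signs for $\xi_1,\xi_2$. Once this geometric fact and the symbol regularity are in hand, the estimate falls out algebraically, with no further case analysis needed since the bound on $|m(\xi_3)-m(\xi_2)|$ is already accompanied by a factor of $|\xi_3|$ from the rewriting.
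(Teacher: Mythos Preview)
Your proof is correct and follows exactly the approach the paper indicates (the paper only states that the lemma is ``a consequence of the mean value theorem and the lower bound for the resonance function'' without writing out details). One small imprecision: the claim that $\xi_1,\xi_2$ \emph{necessarily} have opposite signs is not literally forced by $|\xi_1|\sim|\xi_2|\gtrsim|\xi_3|$ alone --- if they had the same sign then $|\xi_3|=|\xi_1|+|\xi_2|\geq|\xi_1|$, so all three frequencies would be comparable, and in that regime the numerator bound $|\text{num}|\lesssim|\xi_3|\max_i|m(\xi_i)|$ is trivially true without any cancellation; so the conclusion stands either way.
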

We collect the low frequencies as
\begin{equation*}
R_3^{s,m,M} = \int_0^T dt \sum_{\substack{\xi_1+\xi_2+\xi_3 =0, \\ 1 \leq |\xi_i| \leq M }} b_3^{s,m}(\xi_1,\xi_2,\xi_3) \hat{u}(t,\xi_1) \hat{u}(t,\xi_2) \hat{u}(t,\xi_3)
\end{equation*}
Following \cite{GuoOh2018} we differentiate by parts only $R_3^{s,m} - R_3^{s,m,M}$ such that one of the initial frequencies is higher than $M$.\\
This leads us to the boundary term $B_3^{s,m,M}$ with one of the frequencies larger than $M$. We have the following lemma:
\begin{lemma}
\label{lem:BoundaryTermFractionalBenjaminOno}
Suppose that $-1/2 < s <1/2$. Then, we find the following estimate to hold for any $1<a<2$, $\delta \geq 0$:
\begin{equation}
B_3^{s,m,M}(0;T) \lesssim M^{-d(s,a)} \Vert u \Vert^3_{F_a^{s,\delta}(T)}
\end{equation}
\end{lemma}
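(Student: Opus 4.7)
The plan is to bound the boundary term by estimating
\[
C^{s,m,M}(t) := \sum_{\substack{\xi_1+\xi_2+\xi_3=0,\ \xi_i \neq 0 \\ \max_i |\xi_i| > M}} b_3^{s,m}(\xi_1,\xi_2,\xi_3)\, \hat u(t,\xi_1)\hat u(t,\xi_2)\hat u(t,\xi_3)
\]
pointwise at $t \in \{0,T\}$ and then invoking the $L^\infty_t L^2_x$-embedding of Lemma \ref{lem:FsEmbedding} to pass from $\|P_k u(t)\|_{L^2}$ to $\|P_k u\|_{F_{a,k}^\delta(T)}$. First I would perform a Littlewood-Paley decomposition on each factor. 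Since $\xi_1+\xi_2+\xi_3=0$, the two largest frequency blocks must be comparable, so by symmetry we may restrict to $k_1 \geq k_2 \geq k_3$ with $|k_1-k_2|\leq 5$, and the cutoff $\max_i|\xi_i|>M$ forces $2^{k_1} \gtrsim M$.

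Next, Lemma \ref{lem:MultiplierBoundFractionalBenjaminOno} yields $|b_3^{s,m}| \lesssim 2^{-ak_1}\max_i m(\xi_i)$, and since $m \in S^s_\varepsilon$ is spherically symmetric and essentially monotone in $|\xi|$ the maximum is attained at either $k_1$ or $k_3$, so $\max_i m(\xi_i) \lesssim 2^{2sk_1} + 2^{2sk_3}$. I would then estimate the trilinear integral by Plancherel, H\"older, and Bernstein on the lowest frequency:
\[
\Bigl|\int u_{k_1}u_{k_2}u_{k_3}\,dx\Bigr| \leq \|u_{k_1}\|_{L^2}\|u_{k_2}\|_{L^2}\|u_{k_3}\|_{L^\infty} \lesssim 2^{k_3/2} \prod_{i=1}^3 \|P_{k_i} u(t)\|_{L^2}.
\]
Setting $A_{k} := 2^{sk}\|P_{k}u\|_{F_{a,k}^\delta(T)}$ so that $\sum_k A_k^2 = \|u\|_{F_a^{s,\delta}(T)}^2$, the contribution of the block $(k_1,k_2,k_3)$ reduces to
\[
\bigl( 2^{-ak_1} 2^{(1/2-s)k_3} + 2^{-(a+2s)k_1} 2^{(1/2+s)k_3} \bigr)\, A_{k_1}A_{k_2}A_{k_3}.
\]

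Finally, I would sum in $(k_1,k_2,k_3)$. For each of the two summands, Cauchy-Schwarz in $k_3 \in \{0,\ldots,k_1\}$ converges (since $|s|<1/2$) and produces $2^{(1/2 \mp s)k_1}\|A\|_{\ell^2}$ respectively; both terms collapse to the same $k_1$-weight $2^{-(a+s-1/2)k_1}$. Summing over $k_1 \sim k_2$ with $2^{k_1}\gtrsim M$ (noting $k_2$ ranges over $O(1)$ values per $k_1$) yields
\[
C^{s,m,M}(t) \lesssim M^{-(a+s-1/2)} \|u\|_{F_a^{s,\delta}(T)}^3,
\]
giving $d(s,a) = a + s - 1/2 > 0$, which is ensured by $a > 1$ and $s > -1/2$. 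The main obstacle will be the bookkeeping between the two regimes of $\max_i m(\xi_i)$, namely whether the maximum is attained at the top or the bottom frequency, together with keeping the $k_3$-sum summable in both; the hypothesis $-1/2 < s < 1/2$ is precisely what makes both $k_3$-sums convergent, and the (somewhat fortuitous) cancellation of the two weights into a common final exponent is what allows a uniform treatment without a case split on $\mathrm{sgn}\, s$.
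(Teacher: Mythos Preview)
Your proposal is correct and follows essentially the same line as the paper's proof: dyadic localization with $k_1\geq k_2\geq k_3$ and $|k_1-k_2|\leq 5$, the multiplier bound of Lemma~\ref{lem:MultiplierBoundFractionalBenjaminOno}, H\"older/Bernstein on the trilinear sum, and the embedding of Lemma~\ref{lem:FsEmbedding}. The paper is terse on the final summation (``This expression sums up to the claimed estimate''), whereas you spell out the Cauchy--Schwarz in $k_3$ and the explicit exponent $d(s,a)=a+s-1/2$; one minor point is that for $m\in S^s_\varepsilon$ the symbol bound carries an extra $2^{2\varepsilon k_1}$ factor (as in the paper's display), which is harmless for $\varepsilon$ small but should be tracked.
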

\begin{proof}
Localize frequencies on a dyadic scale, i.e., $P_{k_i} u_i = u_i$ and suppose $k_1 \geq k_2 \geq k_3$ by symmetry. We use the embedding from Lemma \ref{lem:FsEmbedding} to reduce the bound to a bound of Sobolev norms. By Lemma \ref{lem:MultiplierBoundFractionalBenjaminOno} and H\"older in position space we find the estimate for the evaluation at $t$
\begin{equation*}
\begin{split}
&2^{2\varepsilon k_1} \frac{\max(2^{2sk_1},2^{2sk_3})}{2^{ak_1}} \sum_{\substack{ \xi_1+\xi_2+\xi_3 = 0, \\ \xi_i \neq 0, |\xi_1| \geq M}} |\hat{u}_1(t,\xi_1)| |\hat{u}_2(t,\xi_2)| |\hat{u}_3(t,\xi_3)| \\
&\lesssim 2^{2\varepsilon k_1} \frac{\max(2^{2sk_1},2^{2sk_3})}{2^{ak_1}} \Vert P_{k_1} u(t) \Vert_{L^2} \Vert P_{k_2} u(t) \Vert_{L^2} 2^{k_3/2} \Vert P_{k_3} u(t) \Vert_{L^2} 
\end{split}
\end{equation*}
This expression sums up to the claimed estimate.
\end{proof}
The remainder term is symmetrized once again to find (the constraint for the initial frequencies will be omitted because it is not relevant in the following)
\begin{equation*}
\begin{split}
R_4^{s,m} &= C \int_0^T dt \int_{\Gamma_4} d\Gamma_4 (b_3^{s,m}(\xi_1,\xi_2,\xi_{31}+\xi_{32}) - b_3^{s,m}(-\xi_{31},-\xi_{32},\xi_{31}+\xi_{32})) \xi_3 \\
&\times \hat{u}(t,\xi_1) \hat{u}(t,\xi_2) \hat{u}(t,\xi_{31}) \hat{u}(t,\xi_{32})
\end{split}
\end{equation*}
Set 
\begin{equation*}
b_4^{s,m}(\xi_1,\xi_2,\xi_{31},\xi_{32}) = [b_3^{s,m}(\xi_1,\xi_2,\xi_{31}+\xi_{32}) - b_3^{s,m}(-\xi_{31},-\xi_{32},\xi_{31}+\xi_{32})] \xi_3
\end{equation*}
For the second symmetrization we record again by the mean value theorem
\begin{lemma}
\label{lem:SecondMultiplierBoundFractionalBenjaminOno}
With the above notation we find the following estimate to hold:
\begin{equation*}
|b_4^{s,m}(\xi_1,\xi_2,\xi_{31},\xi_{32})| \lesssim \frac{ \max_{i=1,2,3} |m(\xi_i)|}{\max_{i=1,2,3} |\xi_i|^a} |\xi_3^*|
\end{equation*}
where $|\xi_1^*| \geq |\xi_2^*| \geq \ldots$ denotes a decreasing rearrangement of the $\xi_i$, $i=1,2,31,32$.
\end{lemma}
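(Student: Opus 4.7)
The plan is to treat the bracketed difference in $b_4^{s,m}$ as a difference of values of $b_3^{s,m}$ at two points of $\Gamma_3$ that share the same third coordinate $\xi_3 = \xi_{31} + \xi_{32}$, and to estimate it by a mean value theorem along the connecting line segment. Indeed, both triples $(\xi_1, \xi_2, \xi_3)$ and $(-\xi_{31}, -\xi_{32}, \xi_3)$ satisfy $y_1 + y_2 = -\xi_3$ thanks to the four-frequency constraint $\xi_1 + \xi_2 + \xi_{31} + \xi_{32} = 0$, so they lie on the affine line $L_{\xi_3} = \{(t, -\xi_3 - t, \xi_3) : t \in \mathbb{R}\} \subset \Gamma_3$. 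I would parametrise by $t$ and write the difference as an integral of $\partial_t b_3^{s,m}(t, -\xi_3 - t, \xi_3)$ from $t = -\xi_{31}$ to $t = \xi_1$, thus reducing matters to a pointwise bound on the $t$-derivative and a bound on the integration length $|\xi_1 + \xi_{31}|$.

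For the derivative bound, one differentiates the ratio $b_3^{s,m} = N/\Omega_1^a$ and controls each piece using the same ingredients that entered the proof of Lemma \ref{lem:MultiplierBoundFractionalBenjaminOno}: the symbol regularity of $m \in S^s_\varepsilon$ provides the cancellation in the numerator $N = m(\xi_1)\xi_1 + m(\xi_2)\xi_2 + m(\xi_3)\xi_3$ (which vanishes to first order since the $\xi_i$ sum to zero), while the lower bound $|\Omega_1^a| \gtrsim |\xi_{\max}|^a |\xi_{\min}|$ controls the denominator and its $t$-derivative. A short product-rule computation then yields a pointwise estimate of the type $|\partial_t b_3^{s,m}| \lesssim \max_i |m(\xi_i)|/|\xi_1^*|^{a+1}$ along the relevant segment, so that after integration the displacement contributes a factor of $|\xi_1 + \xi_{31}|/|\xi_1^*|^{a+1}$ to the difference of $b_3^{s,m}$ values.

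The main obstacle is the bookkeeping for the integration length $|\xi_1 + \xi_{31}|$. I would exploit the symmetry of $b_3^{s,m}$ in its first two arguments to replace $(-\xi_{31}, -\xi_{32})$ by $(-\xi_{32}, -\xi_{31})$ if convenient and thereby choose the favourable pairing, and then use a pigeonhole argument on $\xi_1 + \xi_2 + \xi_{31} + \xi_{32} = 0$ to show $\min(|\xi_1 + \xi_{31}|, |\xi_1 + \xi_{32}|) \lesssim |\xi_3^*|$. The case analysis splits according to whether the two largest of the four frequencies lie in the same pair, $\{\xi_1,\xi_2\}$ or $\{\xi_{31},\xi_{32}\}$, or in different pairs; in the former case the extra factor $|\xi_3| = |\xi_{31}+\xi_{32}| \leq 2|\xi_3^*|$ is also directly available and one may alternatively apply Lemma \ref{lem:MultiplierBoundFractionalBenjaminOno} to each $b_3^{s,m}$-factor separately, while in the latter case the MVT cancellation is the essential mechanism since the two $b_3^{s,m}$-factors may otherwise live at incomparable scales. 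Combining the derivative bound, the length bound $|\xi_3^*|$, and the outer factor $\xi_3$ (which contributes another copy of $|\xi_3^*|$ or is absorbed depending on the case) gives the claimed bound after rearranging and using $|\xi_1^*| \sim |\xi_2^*|$.
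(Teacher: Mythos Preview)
Your approach is the same as the paper's: the paper records only ``by the mean value theorem'' and gives no further detail, and you have correctly fleshed out the MVT argument along the line $L_{\xi_3}\subset\Gamma_3$, together with the case split according to whether the two largest frequencies lie in the same pair or in different pairs. The observation that in the ``different pairs'' case the symmetry of $b_3^{s,m}$ lets you choose the pairing so that the displacement satisfies $\min(|\xi_1+\xi_{31}|,|\xi_1+\xi_{32}|)\lesssim|\xi_3^*|$ is exactly the mechanism needed, and your derivative bound combined with $|\xi_3|\sim|\xi_1^*|$ in that case closes to $\frac{\max|m|}{|\xi_1^*|^a}|\xi_3^*|$ as required.

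One point deserves a word of caution. In the ``same pair'' case where both largest frequencies lie in $\{\xi_{31},\xi_{32}\}$, applying Lemma~\ref{lem:MultiplierBoundFractionalBenjaminOno} separately to $b_3^{s,m}(-\xi_{31},-\xi_{32},\xi_3)$ produces $\max(|m(\xi_{31})|,|m(\xi_{32})|,|m(\xi_3)|)/|\xi_{31}|^a$ rather than $\max_{i=1,2,3}|m(\xi_i)|/\max_{i=1,2,3}|\xi_i|^a$, and these need not be directly comparable for general $m\in S^s_\varepsilon$ when $|\xi_{31}|\gg|\xi_1|$. In the paper's applications (Cases~AIV, BII of Lemma~\ref{lem:RemainderEnergyEstimateSolutionsFractionalBenjaminOno}) this discrepancy is harmless because the weaker of the two bounds still suffices after summation, but if you want the lemma literally as stated you should either run the MVT argument in this case as well (the segment then has length $\lesssim|\xi_1^*|$ and the derivative picks up an extra $|\xi_1^*|^{-1}$, which together with $|\xi_3|\lesssim|\xi_3^*|$ does the job) or note that the right-hand side of the lemma is to be read with the maximum taken over all relevant frequencies rather than only $i=1,2,3$.
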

For the more difficult remainder estimate it is important to note that the second symmetrization cancels the second resonance
\begin{equation}
\label{eq:SecondResonanceBenjaminOno}
\Omega_2^a(\xi_1,\xi_2,\xi_3,\xi_4) = \xi_1 |\xi_1|^a + \xi_2 |\xi_2|^a + \xi_3 |\xi_3|^a + \xi_4 |\xi_4|^a \quad (\xi_1,\xi_2,\xi_3,\xi_4) \in \Gamma_4
\end{equation}
Next, an estimate is derived which is effective when estimating expressions involving two high frequencies and two low frequencies provided that the second resonance is non-vanishing. 
\begin{lemma}
\label{lem:HighLowLowHighMultilinearEstimateFractionalBenjaminOno}
Let $k_i,j_i \in \mathbb{N}$ and $f_{k_i,j_i} \in L^2_{\geq 0}(\mathbb{Z} \times \mathbb{R})$ with $\text{supp} (f_{k_i,j_i}) \subseteq D^a_{k_i,\leq j_i}$. Suppose that $|k_1 - k_4| \leq 5$, $k_2 \leq k_3 \leq k_4-10$ and $\text{supp}_{\xi} (f_{k_m,j_m}) \subseteq I_m$, $m=2,3$, $|I_m| \lesssim 2^l$.\\
Then, we find the following estimate to hold:
\begin{equation}
\label{eq:HighLowLowHighMultilinearEstimate}
\begin{split}
&\int d\Gamma_4(\tau) \int d\Gamma_4(\xi) f_{k_1,j_1}(\xi_1,\tau_1) f_{k_2,j_2}(\xi_2,\tau_2) f_{k_3,j_3}(\xi_3,\tau_3) f_{k_4,j_4}(\xi_4,\tau_4) \\
&\lesssim \min(2^{j_1/2},2^{j_2/2}) (1+2^{j_4-ak_4})^{1/2} 2^{j_3/2} 2^{l/2} \prod_{i=1}^4 \Vert f_{k_i,j_i} \Vert_2
\end{split}
\end{equation}
\end{lemma}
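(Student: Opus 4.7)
The plan is to isolate the factor $f_{k_3, j_3}$, whose frequency--modulation support has measure $\lesssim 2^{l} \cdot 2^{j_3}$, and to reduce the remaining expression to a three-linear form from which the factor $(1+2^{j_4-ak_4})^{1/2}$ is extracted by a Cauchy--Schwarz in $\xi_2$ akin to Lemma~\ref{lem:L2BilinearEstimateHighLowHighInteractionBenjaminOno}, while $\min(2^{j_1/2}, 2^{j_2/2})$ comes from a Young-type convolution estimate in the $\tau$-variables.

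First, perform the standard change of variables $f^\#_{k_i,j_i}(\xi, \tau) = f_{k_i, j_i}(\xi, \tau + \varphi_a(\xi))$, under which the time constraint becomes $\sum \tau'_i + \Omega_2^a(\xi) = 0$ with $\Omega_2^a$ the second resonance \eqref{eq:SecondResonanceBenjaminOno}. Parameterize the integral by $(\xi_2, \xi_3, \xi_4, \tau'_1, \tau'_2, \tau'_3)$ with $\xi_1 = -\xi_2 - \xi_3 - \xi_4$ and $\tau'_4 = -\tau'_1 - \tau'_2 - \tau'_3 - \Omega_2^a(\xi)$, and write the integral in the form $\sum_{\xi_3 \in I_3} \int_{|\tau'_3| \lesssim 2^{j_3}} f^\#_{k_3}(\xi_3, \tau'_3)\,J(\xi_3, \tau'_3)\,d\tau'_3$, where $J$ collects the remaining integration and summation in $(\xi_2, \xi_4, \tau'_1, \tau'_2)$. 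A Cauchy--Schwarz on $f^\#_{k_3}$ against its support (of measure $\lesssim 2^l \cdot 2^{j_3}$) immediately yields a bound of the form $\|f^\#_{k_3}\|_{L^2} \cdot (2^l \cdot 2^{j_3+1})^{1/2} \cdot \|J\|_{L^\infty_{\xi_3, \tau'_3}}$, thereby extracting the factors $2^{l/2}\, 2^{j_3/2}$.

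To obtain the uniform $L^\infty$ bound on $J$, apply Cauchy--Schwarz on $\xi_2 \in I_2$: in the chosen parameterization one computes $\partial_{\xi_2} \Omega_2^a = \varphi_a'(\xi_2) - \varphi_a'(\xi_1)$, which has size $\sim 2^{ak_4}$ since $|\xi_1| \sim 2^{k_4} \gg |\xi_2|$. The modulation constraint $|\tau'_4| \lesssim 2^{j_4}$ then restricts $\xi_2$ to $\lesssim 1 + 2^{j_4 - ak_4}$ values, producing the factor $(1+2^{j_4-ak_4})^{1/2}$ and leaving the quantity $\sum_{\xi_4} \int d\tau'_1 d\tau'_2 \bigl(\sum_{\xi_2 \in I_2} |f^\#_{k_1}|^2 |f^\#_{k_2}|^2 |f^\#_{k_4}|^2\bigr)^{1/2}$. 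Minkowski's integral inequality now pulls the $\ell^2_{\xi_2}$-norm outside the $L^1_{\tau'_1, \tau'_2}\ell^1_{\xi_4}$ integration, reducing the task to estimating, for each fixed $\xi_2$, the expression $\sum_{\xi_4} \int d\tau'_1 d\tau'_2 |f^\#_{k_1}(\xi_1, \tau'_1)|\,|f^\#_{k_2}(\xi_2, \tau'_2)|\,|f^\#_{k_4}(\xi_4, \tau'_4)|$. Exploiting the convolution structure $\tau'_4 = -\tau'_1 - \tau'_2 - \text{const}$ in $(\tau'_1, \tau'_2)$, Young's inequality $\|g * h\|_{L^2} \leq \|g\|_{L^1} \|h\|_{L^2}$ together with the support bound $\|g\|_{L^1} \leq 2^{j/2} \|g\|_{L^2}$ applied to whichever of $f^\#_{k_1}(\xi_1, \cdot)$ and $f^\#_{k_2}(\xi_2, \cdot)$ has the smaller modulation bounds this $\tau$-integral by $\min(2^{j_1/2}, 2^{j_2/2}) \prod_{i \in \{1,2,4\}} \|f^\#_{k_i}(\xi_i, \cdot)\|_{L^2_\tau}$. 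The sum over $\xi_4$ collapses by Cauchy--Schwarz and shift invariance of the $\xi_1$-sum to $\|f^\#_{k_1}\|_{L^2} \|f^\#_{k_4}\|_{L^2}$, and the outer $\ell^2_{\xi_2}$ absorbs into $\|f^\#_{k_2}\|_{L^2}$; multiplying all the extracted factors produces the claimed bound.

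The most delicate point is the appeal to Minkowski: a naive Cauchy--Schwarz on $\xi_2 \in I_2$ at the outer level would cost an extra factor $|I_2|^{1/2} \sim 2^{l/2}$, spoiling the estimate. The symmetric $\min(2^{j_1/2}, 2^{j_2/2})$ then simply reflects the freedom to designate either $f^\#_{k_1}(\xi_1, \cdot)$ or $f^\#_{k_2}(\xi_2, \cdot)$ as the $L^1$ factor in Young's inequality.
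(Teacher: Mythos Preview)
Your argument breaks at the Minkowski step. After the Cauchy--Schwarz in $\xi_2$ you arrive at
\[
\sum_{\xi_4}\int d\tau'_1\,d\tau'_2 \Bigl(\sum_{\xi_2}|f^\#_{k_1}|^2|f^\#_{k_2}|^2|f^\#_{k_4}|^2\Bigr)^{1/2}
\;=\;\bigl\|G\bigr\|_{L^1_{(\xi_4,\tau'_1,\tau'_2)}\ell^2_{\xi_2}},
\]
and you then claim that Minkowski lets you replace this by $\|G\|_{\ell^2_{\xi_2}L^1_{(\xi_4,\tau'_1,\tau'_2)}}$. But Minkowski's inequality gives
\[
\|G\|_{\ell^2_{\xi_2}L^1_y}\;\le\;\|G\|_{L^1_y\ell^2_{\xi_2}},
\]
i.e.\ the $\ell^2$-norm can be pushed \emph{inside} the $L^1$, not pulled outside. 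You are invoking the inequality in the wrong direction. The integrand $G$ genuinely depends on $\xi_2$ through all three factors (via $\xi_1=-\xi_2-\xi_3-\xi_4$ in $f^\#_{k_1}$ and via $\Omega_2^a$ in $f^\#_{k_4}$), so there is no product structure that would make the reverse inequality hold here. If instead of the Minkowski step you simply carry out the $\tau$-convolution and the Cauchy--Schwarz in $\xi_4$ directly on $J$, you pick up an additional $2^{l/2}$ from the $\xi_2$-sum and end up with $2^l$ rather than $2^{l/2}$; so the initial isolation of $f_{k_3}$ via $\|J\|_{L^\infty_{\xi_3,\tau'_3}}$ is already too crude for the stated bound.

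The paper avoids this by performing the dispersive Cauchy--Schwarz in $\xi_3$ rather than $\xi_2$: since $|\partial_{\xi_3}\Omega_2^a|\sim 2^{ak_4}$ as well, one obtains the same factor $(1+2^{j_4-ak_4})^{1/2}$, but now the resulting $\ell^2_{\xi_3}$-expression $\bigl(\sum_{\xi_3}|f^\#_{k_3}|^2|f^\#_{k_4}|^2\bigr)^{1/2}$ carries $f_{k_3}$ and $f_{k_4}$ together. A subsequent Cauchy--Schwarz in $\tau_1$ (respectively $\tau_2$), then $\tau_3$, then $\xi_1$ closes against $\|f_{k_1}\|_2\|f_{k_3}\|_2\|f_{k_4}\|_2$, and only the harmless $\sum_{\xi_2\in I_2}\|f^\#_{k_2}(\xi_2,\cdot)\|_{L^2_\tau}\le 2^{l/2}\|f_{k_2}\|_2$ remains; no interchange of $\ell^2$ and $L^1$ is needed.
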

\begin{proof}
Like in Section \ref{section:LinearBilinearEstimates} we rewrite and use consecutive applications of Cauchy-Schwarz inequality
\begin{equation*}
\begin{split}
&\int d\Gamma_4(\tau) \int d\Gamma_4(\xi) f_{k_1,j_1}(\xi_1,\tau_1) f_{k_2,j_2}(\xi_2,\tau_2) f_{k_3,j_3}(\xi_3,\tau_3) f_{k_4,j_4}(\xi_4,\tau_4) \\
&= \int d\tau_1 \int (d\xi_1)_1 f_{k_1,j_1}^{\#}(\xi_1,\tau_1) \int d\tau_2 \int (d\xi_2)_2 f_{k_2,j_2}^{\#}(\xi_2,\tau_2) \\
&\times \int d\tau_3 \int (d\xi_3)_1 f_{k_3,j_3}^{\#}(\xi_3,\tau_3) f^{\#}_{k_4,j_4}(-\xi_1-\xi_2-\xi_3,-\tau_1-\tau_2-\tau_3+\Omega_2^a) \\
&\lesssim \int d\tau_1 \int (d\xi_1)_1 f^{\#}_{k_1,j_1}(\xi_1,\tau_1) \int d\tau_2 \int (d\xi_2)_1 f_{k_2,j_2}^{\#}(\xi_2,\tau_2) \\
&\times \int d\tau_3 (1+ 2^{j_4-ak_4})^{1/2} \left( \int (d\xi_3)_1 |f_{k_3,j_3}^{\#}|^2 |f_{k_4,j_4}^{\#}|^2 \right)^{1/2} \\
&\lesssim (1+2^{j_4-ak_4})^{1/2} \int (d\xi_1)_1 \int d\tau_2 \int (d\xi_2)_1 f_{k_2,j_2}^{\#}(\xi_2,\tau_2) \\
&\times \int d\tau_3 \left( \int d\tau_1 |f_{k_1,j_1}^{\#}(\xi_1,\tau_1)|^2 \right)^{1/2} \\
&\left( \int (d\xi_3)_1 |f_{k_3,j_3}^{\#}(\xi_3,\tau_3)|^2 \Vert f_{k_4,j_4}(\xi_1+\xi_2+\xi_3,\cdot) \Vert_{L^2_{\tau}}^2 \right)^{1/2} \\
&\lesssim 2^{l/2} 2^{j_2/2} (1+2^{j_4-ak_4})^{1/2} 2^{j_3/2} \prod_{i=1}^4 \Vert f_{k_i,j_i} \Vert_2
\end{split}
\end{equation*}
which yields the second estimate.\\
Similarly, we find the first estimate by
\begin{equation*}
\begin{split}
&\int d\tau_2 \int (d\xi_2)_1 f_{k_2,j_2}^{\#}(\xi_2,\tau_2) \int d\tau_1 \int (d\xi_1)_1 f_{k_1,j_1}^{\#}(\xi_1,\tau_1) \\
&\times\int (d\xi_3)_1 \int d\tau_3 f_{k_3,j_3}^{\#}(\xi_3,\tau_3) f_{k_4,j_4}^{\#}(\xi_1+\xi_2+\xi_3,\tau_1+\tau_2+\tau_3+\Omega) \\
&\lesssim (1+2^{j_4-ak_4})^{1/2} \int d\tau_2 \int (d\xi_2)_1 f_{k_2,j_2}^{\#}(\xi_2,\tau_2) \int d\tau_1 \int (d\xi_1)_1 f_{k_1,j_1}^{\#}(\xi_1,\tau_1) \\
&\times \int d\tau_3 \left( \int (d\xi_3)_1 |f_{k_3,j_3}^{\#}(\xi_3,\tau_3)|^2 |f_{k_4,j_4}^{\#}(\xi_1+\xi_2+\xi_3,\tau_1+\tau_2+\tau_3+\Omega_2^a)|^2 \right)^{1/2} \\
&\lesssim 2^{j_1/2} 2^{l/2} 2^{j_3/2} (1+2^{j_4-ak_4})^{1/2} \prod_{i=1}^4 \Vert f_{k_i,j_i} \Vert_2
\end{split}
\end{equation*}
\end{proof}
\begin{remark}
Note that the argument is symmetric with respect to the low frequencies $k_2$ and $k_3$ above and the high frequencies $k_1$ and $k_4$. Below we will freely use the estimates obtained from such permutations.
\end{remark}
We record the following short time consequences (i.e. modulations large depending on the frequencies):
\begin{lemma}
\label{lem:HighLowLowHighMultilinearShorttimeEstimateFractionalBenjaminOno}
Suppose that $|k_1-k_4| \leq 5$, $k_3 \leq k_4-10$ and $j_i \geq (2-a)k_i$.\\
Then, we find the following estimate to hold:
\begin{equation}
\label{eq:HighLowLowHighMultilinearShorttimeEstimateIFractionalBenjaminOno}
\begin{split}
&\int d\Gamma_4(\tau) \int d\Gamma_4(\xi) f_{k_1,j_1}(\xi_1,\tau_1) f_{k_2,j_2}(\xi_2,\tau_2) f_{k_3,j_3}(\xi_3,\tau_3) f_{k_4,j_4}(\xi_4,\tau_4) \\
&\lesssim 2^{-k_1} \prod_{i=1}^4 2^{j_i/2} \Vert f_{k_i,j_i} \Vert_2
\end{split}
\end{equation}
provided that $k_2 \leq k_3-5$.\\
Suppose the initial hypothesis and $| k_2 - k_3| \leq 3$. Then, we find the following estimate to hold:
\begin{equation}
\label{eq:HighLowLowHighMultilinearShorttimeEstimateIIFractionalBenjaminOno}
\begin{split}
&\int d\Gamma_4(\tau) \int d\Gamma_4(\xi) f_{k_1,j_1}(\xi_1,\tau_1) f_{k_2,j_2}(\xi_2,\tau_2) f_{k_3,j_3}(\xi_3,\tau_3) f_{k_4,j_4}(\xi_4,\tau_4) \\
&\lesssim 2^{-k_1} 2^{(0k_3)+} \prod_{i=1}^4 2^{j_i/2} \Vert f_{k_i,j_i} \Vert_2
\end{split}
\end{equation}
\end{lemma}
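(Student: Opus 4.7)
The plan is to deduce both estimates from Lemma \ref{lem:HighLowLowHighMultilinearEstimateFractionalBenjaminOno}, combined with the short-time hypothesis $j_i \geq (2-a)k_i$ and a lower bound for the second resonance. The key observation is that $\tau_1+\tau_2+\tau_3+\tau_4=0$ together with $|\tau_i-\varphi_a(\xi_i)| \lesssim 2^{j_i}$ force $2^{j_{\max}} \gtrsim |\Omega_2^a|$, and a Taylor expansion of $\varphi_a$ around $\xi_4 \approx -\xi_1$ in the present geometry yields
\[
\Omega_2^a(\xi_1,\xi_2,\xi_3,\xi_4) = -(a+1)|\xi_1|^a\,\mathrm{sgn}(\xi_1)(\xi_2+\xi_3) + \text{lower order},
\]
hence $|\Omega_2^a| \gtrsim 2^{ak_1}\,|\xi_2+\xi_3|$ whenever the leading term dominates. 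Simultaneously, the short-time hypothesis together with $a>1$ gives the conversion
\[
(1+2^{j_?-ak_?})^{1/2} \lesssim 2^{j_?/2}\,2^{-(2-a)k_?/2},
\]
which turns the mixed factor produced by Lemma \ref{lem:HighLowLowHighMultilinearEstimateFractionalBenjaminOno} into a clean $2^{j_?/2}$.

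For \eqref{eq:HighLowLowHighMultilinearShorttimeEstimateIFractionalBenjaminOno} (the case $k_2 \leq k_3-5$), one has $|\xi_2+\xi_3| \sim 2^{k_3}$, so $2^{j_{\max}} \gtrsim 2^{ak_1+k_3}$. I would split into subcases according to which index realizes $j_{\max}$, and in each subcase apply the symmetric variant of Lemma \ref{lem:HighLowLowHighMultilinearEstimateFractionalBenjaminOno} (via the symmetries $k_1 \leftrightarrow k_4$ and $k_2 \leftrightarrow k_3$) with $l=k_3$ so that the measure factor $(1+2^{j_?-ak_?})^{1/2}$ is placed on an index \emph{other} than the one realizing $j_{\max}$. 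Substituting the conversion estimate and using $2^{-j_{\max}/2} \lesssim 2^{-(ak_1+k_3)/2}$ to absorb the extracted $2^{j_{\max}/2}$, the factors $2^{l/2}=2^{k_3/2}$, the two $2^{-(2-a)k_?/2}$ coming from the two high-frequency indices, and $2^{-(ak_1+k_3)/2}$ telescope to exactly $2^{-k_1}$.

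For \eqref{eq:HighLowLowHighMultilinearShorttimeEstimateIIFractionalBenjaminOno} (the case $|k_2-k_3| \leq 3$), the quantity $\xi_2+\xi_3$ may be much smaller than $2^{k_3}$ due to cancellation. I would decompose dyadically in $|\xi_2+\xi_3|$: for each $l'' \in \{0,1,\dots,k_3+O(1)\}$ split $f_{k_3,j_3}$ into pieces supported in $\xi_3$-intervals of length $2^{l''}$ and restrict $f_{k_2,j_2}$ to the $\xi_2$-range forcing $|\xi_2+\xi_3| \sim 2^{l''}$, which is itself an interval of length $\sim 2^{l''}$. On each such slice, Lemma \ref{lem:HighLowLowHighMultilinearEstimateFractionalBenjaminOno} applies with $l=l''$ and $|\Omega_2^a| \sim 2^{ak_1+l''}$, and the subcase analysis from the previous paragraph (with $l''$ in the role of $k_3$) yields $2^{-k_1}\prod_i 2^{j_i/2}\|f_{k_i,j_i}\|_2$ per slice, after summing the decomposition index by Cauchy-Schwarz. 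Summing the $\sim k_3$ dyadic slices in $l''$ then produces the logarithmic loss absorbed into $2^{(0 k_3)+}$.

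The main obstacle is precisely the near-cancellation $\xi_2+\xi_3 \approx 0$ in Case (ii): it is this effect that degrades the second resonance, mandates the dyadic decomposition in $|\xi_2+\xi_3|$, and yields the $\epsilon$-loss at frequency $2^{k_3}$. Case (i) is essentially bookkeeping once the correct symmetric variant of Lemma \ref{lem:HighLowLowHighMultilinearEstimateFractionalBenjaminOno} is chosen according to the position of $j_{\max}$. Minor care is needed to handle the sub-regime $k_3 \sim k_1$, where the lower-order terms $\xi_2|\xi_2|^a+\xi_3|\xi_3|^a$ in the expansion of $\Omega_2^a$ could compete with the leading term; in that regime, however, one has the stronger bound $|\Omega_2^a| \gtrsim 2^{(a+1)k_3}$, which is more than sufficient to close the estimate.
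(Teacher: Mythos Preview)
Your approach is exactly the paper's: for \eqref{eq:HighLowLowHighMultilinearShorttimeEstimateIFractionalBenjaminOno} apply Lemma~\ref{lem:HighLowLowHighMultilinearEstimateFractionalBenjaminOno} with $l=k_3$ together with the resonance bound $j_{\max}\gtrsim ak_1+k_3$, and for \eqref{eq:HighLowLowHighMultilinearShorttimeEstimateIIFractionalBenjaminOno} decompose dyadically in $|\xi_2+\xi_3|$ (equivalently in $|\Omega_2^a|$), use almost orthogonality of the interval pieces, and sum over the $O(k_3)$ scales. Two small corrections: Lemma~\ref{lem:HighLowLowHighMultilinearEstimateFractionalBenjaminOno} yields only \emph{one} factor $(1+2^{j_H-ak_H})^{1/2}$, so only one conversion factor $2^{-(2-a)k_1/2}$ appears, and then $2^{k_3/2}\cdot 2^{-(2-a)k_1/2}\cdot 2^{-(ak_1+k_3)/2}=2^{-k_1}$ on the nose; also the sub-regime $k_3\sim k_1$ never arises because the hypothesis $k_3\le k_4-10$ already enforces separation.
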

\begin{proof}
The first claim follows from applying Lemma \ref{lem:HighLowLowHighMultilinearEstimateFractionalBenjaminOno} with $l=k_3$ and observing that $j_{\max} \geq ak_1 + k_3-10$.\\
For the second claim, we carry out a decomposition of the expression into $|\Omega^2_a| \sim 2^{ak_1+l}$ which is equivalent to assuming that $|\xi_2 \pm \xi_3| \sim 2^l$.\\
At this point, we can assume that $f_{k_3,j_3}(\cdot,\tau)$ and $f_{k_4,j_4}(\cdot,\tau)$ are supported in intervals $I_m$, $m=2,3$ of length $2^l$.\\
The decompositions $f_{k_i,j_i}^{I_i}$ are almost orthogonal, that is
\begin{equation*}
\sum_{I_i} \Vert f_{k_i,j_i}^{I_i} \Vert_2^2 \lesssim \Vert f_{k_i,j_i} \Vert_2^2
\end{equation*}
and further, supposing that $|\Omega^2_a| \sim 2^{ak_1 + l}$ there are only finitely many intervals $I_3$ such that there is a non-trivial contribution
\begin{equation}
\label{eq:ResonanceLocalizedMultilinearEstimateFractionalBenjaminOno}
\int d\Gamma_4(\tau) \int_{|\Omega^2_a| \sim 2^{ak_1+l}} d\Gamma_4(\xi) f_{k_1,j_1}(\xi_1,\tau_1) f_{k_2,j_2}^{I_2}(\xi_2,\tau_2) f_{k_3,j_3}^{I_3}(\xi_3,\tau_3) f_{k_4,j_4}(\xi_4,\tau_4)
\end{equation}
The localized expression is amenable to the argument yielding the first estimate and so,
\begin{equation*}
\eqref{eq:ResonanceLocalizedMultilinearEstimateFractionalBenjaminOno} \lesssim 2^{-k_1} \left( \prod_{i=1}^4 2^{j_i/2} \right) \Vert f_{k_1,j_1} \Vert_2 \Vert f_{k_2,j_2}^{I_2} \Vert_2 \Vert f_{k_3,j_3}^{I_3} \Vert_2 \Vert f_{k_4,j_4} \Vert_2
\end{equation*}
The claim follows from carrying out the sum over $I_2$ and $I_3$ by almost orthogonality and the sum over $l$, which leads to the $2^{(0k_3)+}$ loss.
\end{proof}
We have the following estimate due to Cauchy-Schwarz inequality to handle lower order terms:
\begin{lemma}
\label{lem:CauchySchwarzEstimateFractionalBenjaminOno}
Let $k_i$, $j_i \in \mathbb{N}$ and $f_{k_i,j_i} \in L^2_{\geq 0}(\mathbb{Z} \times \mathbb{R})$ with $\text{supp}(f_{k_i,j_i}) \subseteq D^a_{k_i,\leq j_i}$ and let $k_1^* \geq \ldots \geq k_4^*$ and $j_1^* \geq \ldots \geq j_4^*$ denote decreasing rearrangements of $k_i$, $j_i$.\\
Then, we find the following estimate to hold:
\begin{equation*}
\begin{split}
&\int d\Gamma_4(\tau) \int d\Gamma_4(\xi) f_{k_1,j_1}(\xi_1,\tau_1) f_{k_2,j_2}(\xi_2,\tau_2) f_{k_3,j_3}(\xi_3,\tau_3) f_{k_4,j_4}(\xi_4,\tau_4) \\
&\lesssim 2^{k_4^*/2} 2^{k_3^*/2} 2^{j_4^*/2} 2^{j_3^*/2} \prod_{i=1}^4 \Vert f_{k_i,j_i} \Vert_2
\end{split}
\end{equation*}
\end{lemma}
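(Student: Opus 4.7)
The plan is to establish the inequality by Fubini combined with a one--dimensional four-linear Cauchy--Schwarz estimate applied separately in the $\tau$-slice and in the $\xi$-slice. Since no dispersive information is used, this is the four-linear analogue of Lemma~\ref{lem:L2BilinearEstimateCauchySchwarzBenjaminOno} at the trilinear level.

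First I would parametrize the product measure $d\Gamma_4(\xi)\,d\Gamma_4(\tau)$ by $(\xi_1,\xi_2,\xi_3)$ and $(\tau_1,\tau_2,\tau_3)$, with $\xi_4=-\xi_1-\xi_2-\xi_3$ and $\tau_4=-\tau_1-\tau_2-\tau_3$, and use Fubini to write
\begin{equation*}
I = \int d\Gamma_4(\xi)\,J(\underline{\xi}), \qquad J(\underline{\xi}) = \int d\Gamma_4(\tau) \prod_{i=1}^4 f_{k_i,j_i}(\xi_i,\tau_i).
\end{equation*}
The key one-dimensional identity is: for any partition $\{a,b\}\sqcup\{c,d\}=\{1,2,3,4\}$ and non-negative $g_i$,
\begin{equation*}
\int d\Gamma_4(\tau) \prod_{i=1}^4 g_i(\tau_i) \;\leq\; \|g_a\|_{L^1}\|g_b\|_{L^1}\|g_c\|_{L^2}\|g_d\|_{L^2},
\end{equation*}
obtained by fixing $\tau_a,\tau_b$, using Cauchy--Schwarz in $\tau_c$ to bound $\int g_c(\tau_c) g_d(-\tau_a-\tau_b-\tau_c)\,d\tau_c \leq \|g_c\|_2\|g_d\|_2$ (translation-invariance of $\|g_d\|_2$), and then integrating the remaining factors in $L^1$.

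Applying this with $g_i(\tau)=f_{k_i,j_i}(\xi_i,\tau)$ and choosing $\{a,b\}$ to consist of the two indices carrying the smallest modulations, so that $\{j_a,j_b\}=\{j_3^*,j_4^*\}$, I bound $\|g_a\|_1 \lesssim 2^{j_a/2}\|g_a\|_2$ using that $g_a(\cdot)$ is supported in an interval of length $O(2^{j_a})$. This yields
\begin{equation*}
J(\underline{\xi}) \lesssim 2^{j_3^*/2 + j_4^*/2} \prod_{i=1}^4 \|f_{k_i,j_i}(\xi_i,\cdot)\|_{L^2_\tau}.
\end{equation*}
Setting $h_i(\xi):=\|f_{k_i,j_i}(\xi,\cdot)\|_{L^2_\tau}$, the outer $\xi$-integral over $\Gamma_4(\xi)$ with counting measure on $\mathbb{Z}$ has exactly the same structure. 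Repeating the same argument with $\tau$ replaced by $\xi$, this time pairing the two indices with the smallest frequencies (so that $\{k_a,k_b\}=\{k_3^*,k_4^*\}$) and using the discrete Cauchy--Schwarz $\|h_i\|_{\ell^1_\xi}\leq 2^{k_i/2}\|h_i\|_{\ell^2_\xi}=2^{k_i/2}\|f_{k_i,j_i}\|_{L^2}$, produces the factor $2^{k_3^*/2+k_4^*/2}$. Multiplying the two bounds gives the claimed estimate.

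The only point requiring care is that the two optimal pairings need not coincide: the two indices realising $j_3^*,j_4^*$ may differ from those realising $k_3^*,k_4^*$. This is harmless because Fubini decouples the $\xi$- and $\tau$-integrations, so the pairings can be chosen independently in each step. The ``main obstacle'' is therefore purely combinatorial bookkeeping; the underlying analytic inequality is the elementary convolution Cauchy--Schwarz, used once in each variable.
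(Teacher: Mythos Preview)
Your proof is correct and matches the paper's intent: the paper states the lemma without proof, indicating only that it is ``due to Cauchy--Schwarz inequality,'' and your argument carries out precisely that computation. The decoupling observation---that the optimal pairing in $\tau$ and the optimal pairing in $\xi$ can be chosen independently because the inner $\tau$-integral yields a pure product $\prod_i h_i(\xi_i)$---is exactly what makes the estimate work, and you have identified and handled it cleanly.
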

 However, if $\Omega_2^a=0$ we find $|\xi_1^*| = |\xi_2^*|$, $|\xi_3^*| = |\xi_4^*|$, where the actual frequencies have opposite signs. Thus, the sum over the frequencies collapses and two applications of Cauchy-Schwarz in the modulation variables give the following:
\begin{lemma}
\label{lem:VanishingResonanceEstimate}
Let $k_i,j_i \in \mathbb{N}$ and $f_{k_i,j_i} \in L^2_{\geq 0}(\mathbb{Z} \times \mathbb{R})$ with $\text{supp} (f_{k_i,j_i}) \subseteq D^a_{k_i,\leq j_i}$. Let $|k_1-k_4| \leq 2$, $|k_3-k_4| \leq 2$ and $k_1 \geq k_3$ and let $j_1^* \geq \ldots \geq j_4^*$ denote a decreasing rearrangement of the $j_i$.\\
Then, we find the following estimate to hold:
\begin{equation*}
\begin{split}
&\int d\Gamma_4(\tau) \int_{\substack{\xi_1+\xi_4=0,\\ \xi_2+\xi_3=0}} d\Gamma_4(\xi) f_{k_1,j_1}(\xi_1,\tau_1) f_{k_2,j_2}(\xi_2,\tau_2) f_{k_3,j_3}(\xi_3,\tau_3) f_{k_4,j_4}(\xi_4,\tau_4) \\ &\lesssim 2^{j_4^*/2} 2^{j_3^*/2} \prod_{i=1}^4 \Vert f_{k_i,j_i} \Vert_2
\end{split}
\end{equation*}
\end{lemma}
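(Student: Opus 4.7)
The plan is to use the paired frequency constraints to decouple the $\Gamma_4(\xi)$-sum, so that for each $(\xi_1,\xi_2)$ the remaining modulation integral factorises through a convolution inner product, and the two smallest modulation sizes can be extracted by Cauchy--Schwarz together with Young's inequality.

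First, the constraints $\xi_1+\xi_4=0$ and $\xi_2+\xi_3=0$ together force $\xi_4=-\xi_1$, $\xi_3=-\xi_2$, and they automatically imply $\sum_i \xi_i=0$, so the $\Gamma_4(\xi)$-integration reduces to a free sum over $(\xi_1,\xi_2)\in\Z^2$. Writing $F_i = f_{k_i,j_i}$, the left-hand side becomes
\begin{equation*}
I = \sum_{\xi_1,\xi_2\in\Z}\int_{\tau_1+\tau_2+\tau_3+\tau_4=0} F_1(\xi_1,\tau_1)F_2(\xi_2,\tau_2)F_3(-\xi_2,\tau_3)F_4(-\xi_1,\tau_4)\,d\Gamma_4(\tau).
\end{equation*}

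Next, for fixed $(\xi_1,\xi_2)$, let $g_i$ denote $F_i$ evaluated at the relevant frequency, viewed as a function of $\tau$ alone supported in a set of measure $\lesssim 2^{j_i}$. For any partition $\{a,b\},\{c,d\}$ of $\{1,2,3,4\}$, substituting $\alpha=\tau_a+\tau_b$ (so $\tau_c+\tau_d=-\alpha$) expresses the inner integral as
\begin{equation*}
\int_\R (g_a * g_b)(\alpha)(g_c * g_d)(-\alpha)\,d\alpha,
\end{equation*}
which by Cauchy--Schwarz in $\alpha$ and Young's inequality $\Vert h_1*h_2\Vert_{L^2}\leq \Vert h_1\Vert_{L^1}\Vert h_2\Vert_{L^2}$ is bounded by $\Vert g_a\Vert_{L^1}\Vert g_b\Vert_{L^2}\Vert g_c\Vert_{L^1}\Vert g_d\Vert_{L^2}$. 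Cauchy--Schwarz with the indicator of the support gives the pointwise-in-frequency estimate $\Vert g_i\Vert_{L^1_\tau}\lesssim 2^{j_i/2}\Vert g_i\Vert_{L^2_\tau}$. Since the integrand is symmetric in the four functions (only the constraint $\sum\tau_i=0$ couples them), the partition can be chosen freely; selecting one in which the two smallest indices $j_3^*,j_4^*$ are assigned to the $L^1$-factors produces
\begin{equation*}
|I(\xi_1,\xi_2)| \lesssim 2^{j_3^*/2}2^{j_4^*/2}\Vert F_1(\xi_1,\cdot)\Vert_{L^2_\tau}\Vert F_4(-\xi_1,\cdot)\Vert_{L^2_\tau}\Vert F_2(\xi_2,\cdot)\Vert_{L^2_\tau}\Vert F_3(-\xi_2,\cdot)\Vert_{L^2_\tau}.
\end{equation*}

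Finally, two applications of Cauchy--Schwarz in frequency, once in $\xi_1$ pairing $F_1(\xi_1,\cdot)$ with $F_4(-\xi_1,\cdot)$ and once in $\xi_2$ pairing $F_2(\xi_2,\cdot)$ with $F_3(-\xi_2,\cdot)$, convert the remaining double sum into $\prod_{i=1}^4\Vert F_i\Vert_{L^2(\Z\times\R)}$, which is the claimed bound. There is no genuine obstacle here: the decisive observation (already noted in the text before the lemma) is that the pair of linear conditions collapses the frequency hypersurface from three- to two-dimensional and decouples the $\xi_1$- and $\xi_2$-sums, so the estimate becomes a clean exercise in Cauchy--Schwarz and Young's inequality, with no lower bound on any resonance function required.
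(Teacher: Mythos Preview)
Your proof is correct and follows essentially the same approach as the paper, which only indicates the argument in the sentence preceding the lemma: the frequency constraints collapse the $\Gamma_4(\xi)$-sum to a diagonal over $(\xi_1,\xi_2)$, after which two applications of Cauchy--Schwarz in the modulation variables yield the factors $2^{j_3^*/2}2^{j_4^*/2}$. Your explicit implementation via the convolution form $\int (g_a*g_b)(\alpha)(g_c*g_d)(-\alpha)\,d\alpha$ together with Young's inequality, and the observation that one may always choose the pair partition so that the two smallest modulation indices land on the $L^1$-factors, is a clean way to make the paper's terse hint precise.
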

In case there is one frequency clearly lower than the remaining three frequencies, the resonance is very favourable and we will make use of the following bound which is a consequence of three $L^6_{t,x}$-Strichartz estimates from Lemma \ref{lem:L6StrichartzEstimateFractionalDispersion}:
\begin{lemma}
\label{lem:MultilinearL6StrichartzEstimate}
Let $k_i,j_i \in \mathbb{N}$ and $f_{k_i,j_i} \in L^2_{\geq 0}(\mathbb{Z} \times \mathbb{R})$ with $\text{supp} (f_{k_i,j_i}) \subseteq D^a_{k_i,\leq j_i}$ and let $j_1^* \geq \ldots \geq j_4^*$ denote a decreasing rearrangement of the $j_i$.\\
Then, we find the following estimate to hold:
\begin{equation*}
\begin{split}
&\int d\Gamma_4(\tau) \int d\Gamma_4(\xi) f_{k_1,j_1}(\xi_1,\tau_1) f_{k_2,j_2}(\xi_2,\tau_2) f_{k_3,j_3}(\xi_3,\tau_3) f_{k_4,j_4}(\xi_4,\tau_4) \\
&\lesssim 2^{-j_1^*/2} 2^{(0 k_{\max})+} \prod_{i=1}^4 2^{j_i/2} \Vert f_{k_i,j_i} \Vert_2
\end{split}
\end{equation*}
\end{lemma}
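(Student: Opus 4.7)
The plan is to use Plancherel's theorem to convert the frequency-side integral into a physical-side integral, then apply H\"older's inequality together with the $L^6_{t,x}$-Strichartz estimate from Lemma \ref{lem:L6StrichartzEstimateFractionalDispersion} on three factors and a trivial $L^2_{t,x}$ bound on the remaining factor.

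More concretely, first set $u_{k_i,j_i} = \mathcal{F}_{t,x}^{-1}[f_{k_i,j_i}]$. By Parseval's identity and the convention for the Fourier transform on $\Gamma_4$, the left-hand side equals (up to a harmless constant) $\int_{\mathbb{R} \times \mathbb{T}} u_{k_1,j_1} u_{k_2,j_2} u_{k_3,j_3} u_{k_4,j_4}\, dt\, dx$, since the restriction of the integral to $\Gamma_4$ encodes the space-time convolution structure corresponding to a pointwise product.

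Next, let $i^*$ be the index realizing $j_{i^*} = j_1^*$. Apply H\"older's inequality with exponents $(2,6,6,6)$, placing the $L^2_{t,x}$-norm on the factor $u_{k_{i^*},j_{i^*}}$ (which carries the largest modulation) and the $L^6_{t,x}$-norms on the three remaining factors. For the $L^2$ factor, Plancherel gives $\Vert u_{k_{i^*},j_{i^*}} \Vert_{L^2_{t,x}} = \Vert f_{k_{i^*},j_{i^*}} \Vert_{L^2_\tau \ell^2_\xi}$, which we absorb as $2^{-j_{i^*}/2} \cdot 2^{j_{i^*}/2} \Vert f_{k_{i^*},j_{i^*}} \Vert_2$. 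For each of the other three factors, Lemma \ref{lem:L6StrichartzEstimateFractionalDispersion} yields
\begin{equation*}
\Vert u_{k_i,j_i} \Vert_{L^6_{t,x}} \lesssim 2^{\varepsilon k_i} 2^{j_i/2} \Vert f_{k_i,j_i} \Vert_{L^2_\tau \ell^2_\xi}
\end{equation*}
for any $\varepsilon > 0$. Multiplying these four bounds together produces the factor $2^{-j_1^*/2} \prod_i 2^{j_i/2} \Vert f_{k_i,j_i} \Vert_2$, together with the $\varepsilon$-loss $2^{3\varepsilon k_{\max}}$, which is the claimed $2^{(0k_{\max})+}$.

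There is no real obstacle here beyond assembling the pieces; the only small point to verify is that the three frequencies on which one places the $L^6$ norm can be handled by Lemma \ref{lem:L6StrichartzEstimateFractionalDispersion} irrespective of whether they are the three smallest or include $k_{\max}$, but since the bound in that lemma is symmetric in the frequency location and loses only an arbitrarily small power of $2^{k_i}$, the choice does not matter and the loss is absorbed into $2^{(0k_{\max})+}$. One should also note that the $L^6$ estimate is stated for the inverse Fourier transform of a single frequency-localized piece, so no further decomposition is required.
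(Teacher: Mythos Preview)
Your proposal is correct and takes essentially the same approach as the paper: convert to position space, apply H\"older with exponents $(2,6,6,6)$ placing the $L^2$ norm on the factor with the largest modulation, use Plancherel on that factor and Lemma~\ref{lem:L6StrichartzEstimateFractionalDispersion} on the remaining three. The paper's own proof is identical up to notation (it simply assumes $j_1 = j_1^*$ rather than introducing an index $i^*$).
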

\begin{proof}
Let $u_i = \mathcal{F}^{-1}_{t,x}[f_{k_i,j_i}]$ denote the inverse Fourier transform and to simplify the notation let $j_1=j_1^*$.\\
Then, changing back to position space and applying H\"older's inequality gives
\begin{equation*}
\begin{split}
&\int d\Gamma_4(\tau) \int d\Gamma_4(\xi) f_{k_1,j_1}(\xi_1,\tau_1) \ldots f_{k_2,j_2}(\xi_2,\tau_2) \\
&= \int dt \int dx u_1(t,x) \ldots u_4(t,x) \\
&\lesssim \Vert u_1 \Vert_{L^2_{t,x}} \prod_{i=2}^4 \Vert u_i \Vert_{L^6_{t,x}} \lesssim \Vert f_{k_1,j_1} \Vert_{L^2_{t,x}} \prod_{i=2}^4 2^{(0k_i)+} 2^{j_i/2} \Vert f_{k_i,j_i} \Vert_2 \\
&\lesssim 2^{-j_1^*/2} 2^{(0k_{\max})+} \prod_{i=1}^4 2^{j_i/2} \Vert f_{k_i,j_i} \Vert_2
\end{split}
\end{equation*}
\end{proof}
Further, we have the following consequence of four $L^4_{t,x}$-Strichartz estimates:
\begin{lemma}
\label{lem:MultilinearL4StrichartzEstimate}
Let $1 \leq a \leq 2$, $k_i,j_i \in \mathbb{N}$ and $f_{k_i,j_i} \in L^2_{\geq 0}(\mathbb{Z} \times \mathbb{R})$ with $\text{supp}(f_{k_i,j_i}) \subseteq D^a_{k_i,\leq j_i}$. Then, we find the following estimate to hold:
\begin{equation*}
\begin{split}
&\int_{\Gamma_4(\tau)} d\Gamma_4(\tau) \int_{\Gamma_4(\xi)} d\Gamma_4(\xi) f_{k_1,j_1}(\xi_1,\tau_1) f_{k_2,j_2}(\xi_2,\tau_2) f_{k_3,j_3}(\xi_3,\tau_3) f_{k_4,j_4}(\xi_4,\tau_4) \\
&\lesssim \prod_{i=1}^4 2^{\frac{(a+2)j_i}{4(a+1)}} \Vert f_{k_i,j_i} \Vert_2
\end{split}
\end{equation*}
\end{lemma}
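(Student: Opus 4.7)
The plan is to derive an individual $L^4_{t,x}$-Strichartz estimate for a single factor $\mathcal{F}^{-1}_{t,x}[f_{k,j}]$ from Lemma \ref{lem:L4StrichartzEstimateFractionalBenjaminOno}, and then combine four such estimates via Hölder in physical space after rewriting the multilinear expression using Plancherel.

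First I would note that, writing $u_i = \mathcal{F}^{-1}_{t,x}[f_{k_i,j_i}]$ and using that convolution corresponds to multiplication, the quadruple integral on the left equals (up to an absolute constant depending only on the normalization of the Fourier transform)
\begin{equation*}
\int_{\R}\int_{\T} \tilde{u}_1(t,x)\,\tilde{u}_2(t,x)\,\tilde{u}_3(t,x)\,\tilde{u}_4(t,x)\,dx\,dt,
\end{equation*}
where each $\tilde{u}_i$ is either $u_i$ or its reflection $\overline{u_i(-t,-x)}$; by the reflection lemma already used in the proof of Lemma \ref{lem:L4StrichartzEstimateFractionalBenjaminOno} this swap preserves all $L^p_{t,x}$-norms and, via $\|\tilde u\|_{L^2}=\|\tilde f\|_{L^2}=\|f\|_{L^2}$, also preserves the $L^2$-norm of the symbol.

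Next I would derive the single-function estimate
\begin{equation*}
\Vert \mathcal{F}^{-1}_{t,x}[f_{k,j}] \Vert_{L^4_{t,x}} \lesssim 2^{\frac{(a+2)j}{4(a+1)}} \Vert f_{k,j} \Vert_2.
\end{equation*}
This is obtained by squaring: $\|\mathcal{F}^{-1}_{t,x}[f_{k,j}]\|_{L^4_{t,x}}^2 = \|(\mathcal{F}^{-1}_{t,x}[f_{k,j}])^2\|_{L^2_{t,x}}$, and then by Plancherel this equals (a constant times) $\|f_{k,j}*f_{k,j}\|_{L^2_{\tau,\xi}}$. Applying Lemma \ref{lem:L4StrichartzEstimateFractionalBenjaminOno} with $j_1=j_2=j$ gives the bound $2^{j/2}\cdot 2^{j/(2(a+1))}\|f_{k,j}\|_2^2$; taking square roots and using $\tfrac14+\tfrac{1}{4(a+1)}=\tfrac{a+2}{4(a+1)}$ yields the claimed Strichartz bound.

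Finally I would finish by Hölder's inequality in the form $\|u_1u_2u_3u_4\|_{L^1_{t,x}}\le\prod_{i=1}^{4}\|u_i\|_{L^4_{t,x}}$ and substitute the Strichartz bound for each factor. There is no real obstacle; the only points that require mild care are the sign-reflection bookkeeping needed to turn a $\Gamma_4$-convolution integral into an honest product of four functions (handled by the reflection lemma), and the arithmetic identity $\tfrac14+\tfrac{1}{4(a+1)}=\tfrac{a+2}{4(a+1)}$ which makes the two derivations line up. No modulation-region decomposition or resonance analysis is needed, since the bound is insensitive to both the geometry of the $k_i$ and the precise value of $j_{\max}$ versus $j_{\min}$.
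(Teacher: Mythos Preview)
Your proposal is correct and follows essentially the same approach as the paper: change to position space, apply H\"older to obtain a product of four $L^4_{t,x}$-norms, and bound each via the single-function $L^4$-Strichartz estimate derived from Lemma~\ref{lem:L4StrichartzEstimateFractionalBenjaminOno}. The paper's proof is terser (it simply states that the $L^4_{t,x}$-estimate is a consequence of Lemma~\ref{lem:L4StrichartzEstimateFractionalBenjaminOno}), while you spell out the squaring-and-Plancherel step and the reflection bookkeeping, but the substance is identical.
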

\begin{proof}
Like in Lemma \ref{lem:MultilinearL6StrichartzEstimate} change to position space and apply H\"older to find
\begin{equation*}
\begin{split}
&\int d\Gamma_4(\tau) \int d\Gamma_4(\xi) f_{k_1,j_1}(\xi_1,\tau_1) \ldots f_{k_4,j_4}(\xi_4,\tau_4) \\
&= \int dt \int dx u_1(t,x) \ldots u_4(t,x) \\
&\lesssim \prod_{i=1}^4 \Vert u_i \Vert_{L^4_{t,x}} \lesssim \prod_{i=1}^4 2^{\frac{(a+2)j_i}{4(a+1)}} \Vert f_{k_i,j_i} \Vert_2
\end{split}
\end{equation*}
The $L^4_{t,x}$-Strichartz estimate is a consequence of Lemma \ref{lem:L4StrichartzEstimateFractionalBenjaminOno}.
\end{proof}
The more involved remainder estimate, where the above multilinear estimates are deployed, is carried out in the following lemma:
\begin{lemma}
\label{lem:RemainderEnergyEstimateSolutionsFractionalBenjaminOno}
Let $1<a<2$ and $T \in (0,1]$. Suppose that $s \geq 3/2-a$. Then, we find the following estimate to hold:
\begin{equation*}
\left| \int_0^T R_4^{m}[u] ds \right| \lesssim T^\theta \Vert u \Vert_{F^{s-\varepsilon,\delta}_a(T)}^4
\end{equation*}
provided that $m \in S^s_\varepsilon$, where $\varepsilon(s,a) > 0$, $\theta(s,a) >0$, $\delta = \delta(s,a)>0$ are chosen sufficiently small.
\end{lemma}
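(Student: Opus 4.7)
The plan is to start from a dyadic Littlewood-Paley decomposition of each of the four inputs in frequency ($k_1,k_2,k_3,k_4$) and in modulation ($j_1,j_2,j_3,j_4$), and to extract the $T^\theta$ gain by using Lemma~\ref{lem:tradingModulationRegularity} to trade the $F_{a,k}^\delta$-norm for an $F_{a,k}^{b,\delta}$-norm with $b$ slightly less than $1/2$. The symbol $b_4^{s,m}$ is controlled pointwise by Lemma~\ref{lem:SecondMultiplierBoundFractionalBenjaminOno} by $(\xi_1^*)^{s+\varepsilon-a}\,|\xi_3^*|$, using the symbol bound $m(\xi)\lesssim\langle\xi\rangle^{s+\varepsilon}$ from $m\in S^s_\varepsilon$. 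The key identity $\xi_1+\xi_2+\xi_{31}+\xi_{32}=0$ forces $|\xi_1^*|\sim|\xi_2^*|$, so we may label the two largest frequencies $k_1^*\sim k_2^*$ and the other two $k_3^*\geq k_4^*$.

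Next I would enumerate the frequency configurations and apply the corresponding multilinear $L^2$-bound from Section~\ref{section:EnergyEstimates}. In the fully low-frequency case $k_1^*\leq 100$, Lemma~\ref{lem:CauchySchwarzEstimateFractionalBenjaminOno} combined with the modulation-regularity trade immediately gives the desired bound. In the all-high case $k_1^*\sim k_2^*\sim k_3^*\sim k_4^*$, the second resonance $\Omega_2^a$ may be large (size $2^{(a+1)k_1^*}$), in which case a modulation is forced to be large and Lemma~\ref{lem:HighLowLowHighMultilinearShorttimeEstimateFractionalBenjaminOno} (applied with $l=k_3^*$) gives $2^{-k_1^*}$ decay; or $\Omega_2^a$ may be small or vanish, in which case I would fall back on Lemma~\ref{lem:MultilinearL6StrichartzEstimate} (three $L^6$-Strichartz) or Lemma~\ref{lem:VanishingResonanceEstimate} for the exactly resonant pairing $\xi_1+\xi_4=\xi_2+\xi_3=0$. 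In the mixed case $k_1^*\sim k_2^*\gg k_3^*\geq k_4^*$, the multiplier gain $|\xi_3^*|\cdot (\xi_1^*)^{s+\varepsilon-a}$ is paired with the bilinear refinement from Lemma~\ref{lem:HighLowLowHighMultilinearShorttimeEstimateFractionalBenjaminOno}, giving a factor $2^{-k_1^*}$.

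Translating $L^2$-bounds into short-time norms via $\Vert f_{k,j}\Vert_2\lesssim 2^{-j/2}2^{-(s-\varepsilon)k}\Vert P_k u\Vert_{F^{s-\varepsilon,\delta}_{a,k}}$, the main mixed case produces a total weight $\lesssim 2^{(s+\varepsilon-a-1)k_1^*-2(s-\varepsilon)k_1^*}\cdot 2^{k_3^*-2(s-\varepsilon)k_3^*}$, and summability over $k_1^*$ requires only $s+a+1>0$ (trivial), while summability over $k_3^*$ requires the low-frequency weight $2^{(1-2(s-\varepsilon))k_3^*}$ to be balanced against the remaining decay in $k_1^*$; a standard Cauchy-Schwarz over the low frequencies, together with the constraint $k_3^*\leq k_1^*$, forces exactly $s\geq 3/2-a$. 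The $\varepsilon$ and $\delta$ are then chosen sufficiently small so that the borderline summations close and $T^\theta$ is preserved through Lemma~\ref{lem:tradingModulationRegularity}.

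The main obstacle is the high-low-low-high interaction, where the $|\xi_3^*|$ from the second symmetrization threatens to destroy summation at low frequencies, and where resonant quadruples with $\Omega_2^a=0$ prevent any further integration by parts. It is precisely here that Lemma~\ref{lem:VanishingResonanceEstimate} becomes indispensable: the $\Gamma_4$-sum collapses to a diagonal, killing the seemingly dangerous $2^{k_3^*}$ loss. The threshold $s\geq 3/2-a$ is therefore sharp for this argument, being dictated by the balance in the high-low-low-high regime; everything else is bookkeeping of modulation sums handled uniformly by Lemma~\ref{lem:tradingModulationRegularity}.
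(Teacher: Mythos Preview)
Your outline has the right overall shape (dyadic decomposition, case analysis by frequency interaction, multilinear $L^2$ bounds), but there are two genuine gaps and two smaller errors.

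\textbf{Main conceptual gap.} You miss the whole point of the second symmetrization: on the set $\Omega_2^a=0$ the symbol $b_4^{s,m}$ \emph{vanishes}. Indeed, for nonzero integer frequencies with $1<a<2$, $\Omega_2^a(\xi_1,\xi_2,\xi_{31},\xi_{32})=0$ forces a pairing $\{\xi_1,\xi_2\}=\{-\xi_{31},-\xi_{32}\}$ (or $\xi_3=0$), and then $b_3(\xi_1,\xi_2,\xi_3)=b_3(-\xi_{31},-\xi_{32},\xi_3)$ by symmetry, so $b_4=0$. This is why the paper writes the reduced expression as an integral over $\Omega_2^a\neq 0$ and never invokes Lemma~\ref{lem:VanishingResonanceEstimate} in this proof. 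Your claim that Lemma~\ref{lem:VanishingResonanceEstimate} is ``indispensable'' here is therefore wrong; that lemma is needed only for the \emph{difference} estimates in Proposition~\ref{prop:EnergyEstimateDifferencesSolutionFractionalBenjaminOno}, where the reduced symmetry prevents the second symmetrization.

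\textbf{Missing time-localization factor.} Your accounting omits the factor $T\,2^{(2-a+\delta)k_1^*}$ that arises when you partition $[0,T]$ into intervals of length $2^{-(2-a+\delta)k_1^*}$ in order to place each factor into $F^{\delta}_{a,k_i}$. This factor is the source of the $T$ in the paper's proof and it must be balanced by the decay from the multilinear estimates; without it your power counting cannot be correct. Lemma~\ref{lem:tradingModulationRegularity} is a separate mechanism (used in Section~\ref{section:ShorttimeBilinearEstimates}) and does not replace this step.

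\textbf{Smaller issues.} First, by the definition of $S^s_\varepsilon$ one has $m(\xi)\lesssim\langle\xi\rangle^{2(s+\varepsilon)}$, not $\langle\xi\rangle^{s+\varepsilon}$; this factor-of-two error propagates through your weight computation. Second, in the all-high case $k_1^*\sim k_2^*\sim k_3^*\sim k_4^*$ you cannot apply Lemma~\ref{lem:HighLowLowHighMultilinearShorttimeEstimateFractionalBenjaminOno}, which requires a genuine frequency separation $k_3\leq k_4-10$; the paper uses Lemma~\ref{lem:MultilinearL4StrichartzEstimate} (four $L^4$-Strichartz) here instead, and Lemma~\ref{lem:MultilinearL6StrichartzEstimate} when exactly one frequency is low and the resonance is large.
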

\begin{proof}
In the expression
\begin{equation}
\label{eq:RemainderFractionalBenjaminOno}
\int_0^T dt \int_{\Gamma_4} d\Gamma_4 b^{m_\varepsilon}_4(\xi_1,\xi_2,\xi_{31},\xi_{32}) \hat{u}(\xi_1) \hat{u}(\xi_2) \hat{u}(\xi_{31}) \hat{u}(\xi_{32})
\end{equation}
we can suppose $|\xi_1| \gtrsim |\xi_2|$, $|\xi_{31}| \gtrsim |\xi_{32}|$ by symmetry.\\
Further, we break the frequencies into dyadic blocks $|\xi_1| \sim 2^{k_1}$, $|\xi_2| \sim 2^{k_2}$, $|\xi_{31}| \sim 2^{k_{31}}$, $|\xi_{32}| \sim 2^{k_{32}}$.\\
After dyadic frequency localization for an estimate of \eqref{eq:RemainderFractionalBenjaminOno} one has additionally to take into account the time localization and the multiplier bound. For this purpose, we perform a Case-by-Case analysis:\\
\textbf{Case A.} $|\xi_1| \sim |\xi_2|$\\
\underline{Subcase AI.} $|\xi_1| \gg |\xi_3| \gtrsim |\xi_{31}| \gtrsim |\xi_{32}|$\\
\underline{Subcase AII.} $|\xi_1| \gg |\xi_3| \ll |\xi_{31}| \sim |\xi_{32}|$ \\
\underline{Subcase AIII.} $|\xi_1| \sim |\xi_3| \gtrsim |\xi_{31}| \gtrsim |\xi_{32}|$ \\
\underline{Subcase AIV.} $|\xi_1| \sim |\xi_3| \ll |\xi_{31}| \sim |\xi_{32}|$ \\
\textbf{Case B.} $|\xi_1| \gg |\xi_2|$\\
\underline{Subcase BI.} $|\xi_1| \sim |\xi_3| \sim |\xi_{31}| \sim |\xi_{32}|$\\
\underline{Subcase BII.} $|\xi_1| \sim |\xi_3| \ll |\xi_{31}| \sim |\xi_{32}|$\\
\underline{Subcase BIII.} $|\xi_1| \sim |\xi_3| \sim |\xi_{31}| \gg |\xi_{32}|$\\
Let $\gamma: \R \rightarrow [0,1]$ denote a smooth function with support in $[-1,1]$ satisfying
\begin{equation*}
\sum_{n \in \mathbb{Z}} \gamma^4(x-n) \equiv 1
\end{equation*}
We have
\begin{equation*}
\begin{split}
\eqref{eq:RemainderFractionalBenjaminOno}_{|\xi_1| \sim 2^{k_1},\ldots} &= \sum_{|m| \lesssim T 2^{\alpha k_{\max}}} \int_{\R} dt \int_{\Gamma_4, |\xi_1| \sim 2^{k_1},\ldots} b_4^{m_\varepsilon}(\xi_1,\xi_2,\xi_{31},\xi_{32}) \\
&1_{[0,T]}(t) \gamma(2^{-\alpha k_{\max}} t -m) \hat{u}(\xi_1) \ldots \gamma(2^{-\alpha k_{\max}} t -m) \hat{u}(\xi_{32})
\end{split}
\end{equation*}
where $\alpha = (2-a-\delta)$, so that the products $\gamma(2^{-\alpha k_{\max}} t -m) \hat{u}(\xi_i)$ are estimated in the $F^\delta_{a,k_i}$.\\
Here and below we confine ourselves to the majority of the cases, where the smooth cutoff does not interact with the sharp cutoff, i.e., only the $m \in \mathbb{Z}$ are considered, where
\begin{equation}
\label{eq:NoSharpCutoffFractionalBenjaminOno}
1_{[0,T]}(t) \gamma(2^{-\alpha k_{\max}} t - m) = \gamma(2^{-\alpha k_{\max}} t -m)
\end{equation}
Observe that there are at most four exceptional cases, where the above display fails. These can be treated by interpolation with the estimate from Lemma \ref{lem:CauchySchwarzEstimateFractionalBenjaminOno} and Lemma \ref{lem:sharpTimeCutoffAlmostBounded}.\\
Thus, adapting the reductions and notations from Section \ref{section:ShorttimeBilinearEstimates} one has to estimate
\begin{equation}
\label{eq:EnergyEstimateReductionBenjaminOno}
\begin{split}
&T 2^{(2-a+\delta)k_1^*} |b_4(2^{k_1},2^{k_2},2^{k_{31}},2^{k_{32}})| \int d\Gamma_4(\tau) \int_{\Omega_2^a \neq 0} d\Gamma_4(\xi) f_{k_1,j_1}(\xi_1,\tau_1) \\
&\times f_{k_2,j_2}(\xi_2,\tau_2) f_{k_{31},j_{31}}(\xi_{31},\tau_{31}) f_{k_{32},j_{32}}(\xi_{32},\tau_{32})
\end{split}
\end{equation}
where $j_i \geq (2-a+\delta)k^*_1$, $i=1,2,31,32$ taking into account the time localization. For the sake of brevity write in the following $f_{k_3,j_3} = f_{k_{31},j_{31}}$ and $f_{k_4,j_4} = f_{k_{32},j_{32}}$.\\
For the estimate we will use Lemma \ref{lem:HighLowLowHighMultilinearShorttimeEstimateFractionalBenjaminOno} and \ref{lem:MultilinearL6StrichartzEstimate} in case of separated frequencies and Lemma \ref{lem:L4StrichartzEstimateFractionalBenjaminOno}, whenever the frequencies are not separated. We turn to the single cases.\\
\underline{Subcase AI.} For $b_4^m$ we have the size estimate $|b_4^m| \lesssim \frac{\max( 2^{2sk_1}, 2^{2sk_3}) 2^{2 \varepsilon k_1}}{2^{ak_1}} 2^{k_3}$. The time localization yields a factor of $T 2^{(2-a+\delta)k_1}$ and an application of Lemma \ref{lem:HighLowLowHighMultilinearShorttimeEstimateFractionalBenjaminOno} gives
\begin{equation*}
\eqref{eq:EnergyEstimateReductionBenjaminOno} \lesssim \max( 2^{2s k_1}, 2^{2sk_3}) 2^{k_3-k_1} 2^{2(1-a)k_1} 2^{\delta k_1} 2^{2 \varepsilon k_1} \prod_{i=1}^4 2^{j_i/2} \Vert f_{k_i,j_i} \Vert_2
\end{equation*}
\underline{Subcase AII.} In case the frequencies are not of comparable size one can argue like in Case AI.\\
Otherwise, we apply Lemma \ref{lem:L4StrichartzEstimateFractionalBenjaminOno} to find together with the size estimate of $b^m_4$ and the time localization
\begin{equation*}
\eqref{eq:EnergyEstimateReductionBenjaminOno} \lesssim T \frac{\max( 2^{2s k_1}, 2^{2sk_3})}{2^{ak_1}} 2^{k_3} 2^{(2-a+\delta)k_1} 2^{2 \varepsilon k_1} 2^{-(\delta + 3 \varepsilon) k_1} \prod_{i=1}^4 2^{j_i/2} \Vert f_{k_i,j_i} \Vert_2
\end{equation*}
\underline{Subcase AIII.} This case can be covered following along the above lines.\\
\underline{Subcase AIV.} The size estimate for $b^m_4$ is $|b_4^m| \lesssim \frac{\max( 2^{2s k_1}, 2^{2sk_3})}{2^{ak_1}} 2^{2 \varepsilon k_1} 2^{k_1}$. The time localization yields a factor of $T 2^{(2-a+\delta)k_{31}}$ and an application of Lemma \ref{lem:HighLowLowHighMultilinearShorttimeEstimateFractionalBenjaminOno} gives a smoothing factor of $2^{-k_{31}} 2^{\varepsilon k_{1}}$, which yields
\begin{equation*}
\eqref{eq:EnergyEstimateReductionBenjaminOno} \lesssim T \max( 2^{2s k_1}, 2^{2sk_3}) 2^{(1-a)k_1} 2^{(1-a)k_{31}} 2^{\delta k_{31}} 2^{2\varepsilon k_1} \prod_{i=1}^4 2^{j_i/2} \Vert f_{k_i,j_i} \Vert_2
\end{equation*}
\underline{Subcase BI.} The size estimate of $b_4^m$ is $|b_4^m| \lesssim \frac{\max( 2^{2s k_1}, 2^{2sk_2}) 2^{k_1}}{2^{ak_1}} 2^{2 \varepsilon k_1}$, time localization amounts to a factor of $T2^{(2-a+\delta)k_1}$ and using the resonance $|\Omega_2^a| \gtrsim 2^{(a+1)k_1}$, hence, $j_1^* \geq (a+1)k_1/2 - 10$ in conjunction with Lemma \ref{lem:MultilinearL6StrichartzEstimate} we find
\begin{equation*}
\begin{split}
\eqref{eq:EnergyEstimateReductionBenjaminOno} &\lesssim T \frac{2^{2(s+\varepsilon) k_1}}{2^{ak_1}} 2^{k_1} 2^{-(a+1)k_1/2} 2^{(2-a+\delta)k_1} 2^{3 \varepsilon k_1} \prod_{i=1}^4 2^{j_i/2} \Vert f_{k_i,j_i} \Vert_2 \\
&\lesssim T 2^{2sk_1} 2^{5/2(1-a)} 2^{3 \varepsilon k_1} \prod_{i=1}^4 2^{j_i/2} \Vert f_{k_i,j_i} \Vert_2
\end{split}
\end{equation*}
\underline{Subcase BII.} The size estimate is $|b_4^m| \lesssim \frac{\max( 2^{2s k_1}, 2^{2sk_2}) 2^{2 \varepsilon k_1}}{2^{ak_1}}$, time localization gives a factor of $T 2^{(2-a+\delta)k_{31}}$ and by Lemma \ref{lem:HighLowLowHighMultilinearShorttimeEstimateFractionalBenjaminOno} we find
\begin{equation*}
\eqref{eq:EnergyEstimateReductionBenjaminOno} \lesssim T \frac{\max( 2^{2s k_1}, 2^{2sk_2}) 2^{k_1}}{2^{ak_1}} 2^{(2-a+\delta)k_{31}} 2^{-k_{31}} 2^{3\varepsilon k_{31}} \prod_{i=1}^4 2^{j_i/2} \Vert f_{k_i,j_i} \Vert_{L^2}
\end{equation*}
\underline{Subcase BIII.} 
The size of $b^m_4$ is given by $|b^m_4| \lesssim \frac{\max( 2^{2s k_1}, 2^{2sk_2})}{2^{ak_1}} 2^{(1+2 \varepsilon) k_1}$. Time localization gives a factor of $T2^{(2-a+\delta)k_1}$ and an application of Lemma \ref{lem:HighLowLowHighMultilinearShorttimeEstimateFractionalBenjaminOno} gives
\begin{equation*}
\begin{split}
\eqref{eq:EnergyEstimateReductionBenjaminOno} &\lesssim T \frac{\max( 2^{2s k_1}, 2^{2sk_2}) 2^{k_1}}{2^{ak_1}}  2^{(2-a+\delta)k_1} 2^{3\varepsilon k_1} 2^{-k_1} \prod_{i=1}^4 2^{j_i/2} \Vert f_{k_i,j_i} \Vert_2 \\
&\lesssim T \max( 2^{2s k_1}, 2^{2sk_2}) 2^{2(1-a)k_1} 2^{(3 \varepsilon + \delta) k_1} \prod_{i=1}^4 2^{j_i/2} \Vert f_{k_i,j_i} \Vert_2
\end{split}
\end{equation*}
In all cases we find extra smoothing. It is straight-forward to carry out the summations.
\end{proof}
We turn to the proof of energy estimates for differences of solutions.
\begin{proof}[Proof of Proposition \ref{prop:EnergyEstimateDifferencesSolutionFractionalBenjaminOno}]
We start with the proof of \eqref{eq:EnergyEstimateIDifferencesFractionalBenjaminOno}.\\
An application of the fundamental theorem of calculus gives
\begin{equation*}
\begin{split}
2^{-n} \Vert P_n v(t) \Vert^2_{L^2} &= 2^{-n} \Vert P_n v(0) \Vert^2_{L^2} \\
&+ 2^{-n} 2 \int_0^T dt \sum_{\substack{\xi_1+\xi_2+\xi_3=0,\\ \xi_i \neq 0}} \chi^2_n(\xi_1) \xi_1 \hat{v}(\xi_1) (\hat{u}_1(\xi_2) + \hat{u}_2(\xi_2)) \hat{v}(\xi_3) 
\end{split}
\end{equation*}
In the following we pretend that $v$ is governed by $\partial_t v + \partial_x D_x^a v = \partial_x (vu_1)$ to lighten the notation because we can prove the same estimates replacing $u_1$ with $u_2$ due to multilinearity of the argument.\\
The estimate will be carried out by Case-by-Case analysis which is more involved than in the energy estimates for solutions due to reduced symmetry. For the interaction between $v,u_1,v$ in the above display we have to take care of the following cases:
\begin{enumerate}
\item[Case I]: $High \times Low \rightarrow High$-interaction: $(v,u_1,v)$
\item[Case II]: $High \times Low \rightarrow High$-interaction: $(v,v,u_1)$
\item[Case III]: $High \times High \rightarrow High$-interaction
\item[Case IV]: $High \times High \rightarrow Low$-interaction: $(v,u_1,v)$
\end{enumerate}
We start with an analysis of Case I. After integration by parts and switching back to position space we find
\begin{equation}
\label{eq:EnergyEstimateDifferencesCaseIReduction}
2^{-n} 2^k \int_0^T ds \int dx P_n u P_k u_1 P_{n^\prime} v \quad (|n-n^\prime| \leq 5, k \leq n-10)
\end{equation}
Strictly speaking, the estimates are carried out rather for the absolute values of the space-time Fourier transform which becomes only possible after integration by parts in time first. The above notation is used in order to make the argument more readable.\\
Further, we omit to indicate the summation over the frequencies. One checks that the expressions sum up to the desired regularities.\\
Integration by parts in time is only carried out for $n \geq \log_2(M)$: This gives
\begin{equation*}
\begin{split}
\eqref{eq:EnergyEstimateDifferencesCaseIReduction} &= 2^{-n} 2^k 2^{-(an + k)} \left[ P_n v P_{k} u_1 P_{n^\prime} v \right]_{t=0}^T \\
&+ 2^{-n} 2^k 2^{-(an+k)} ( \int_0^T dt \int \partial_x P_n (v u_1) P_k u_1 P_{n^\prime} v + \int _0^T ds \int P_n v \partial_x P_k (u_1^2) P_{n^\prime} v ) \\
&= B_I(0;T) + I_1 + I_2, \quad |n-n^\prime| \leq 5, k \leq n-6
\end{split}
\end{equation*}
Like in the proof of Proposition \ref{prop:PropagationEnergyNormsSolutionsFractionalBenjaminOno} we only integrate by parts the high frequencies. The boundary term can be estimated using H\"older's inequality and Bernstein's inequality like in the estimate of the boundary term for solutions:
\begin{equation*}
\begin{split}
&\sum_{n \geq m} \sum_{k \leq n-6} \sum_{|n-n^\prime| \leq 5} 2^{-(a+1)n} \int dx P_n v(t) P_k u_1(t) P_{n^\prime} v(t) \\
&\lesssim \sum_{n \geq m} \sum_{k \leq n-6} \sum_{|n-n^\prime| \leq 5} 2^{-(a+1)n} \Vert P_n v(t) \Vert_{L^2} \Vert P_k u_1(t) \Vert_{L^\infty} \Vert P_{n^\prime} v(t) \Vert_{L^2} \\
&\lesssim M^{-d} \Vert v \Vert_{F^{-1/2,\delta}_a(T)}^2 \Vert u_1 \Vert_{F^{s,\delta}_a(T)}
\end{split}
\end{equation*}
where the ultimate estimate follows from Lemma \ref{lem:FsEmbedding}.\\
Moreover, for the low frequencies it is straight-forward to infer by the same means that
\begin{equation*}
\sum_{1 \leq n \leq m} \sum_{k \leq n-6} 2^{-n} 2^k \int_0^T dt \int dx P_n v P_k u_1 P_{n^\prime} v \lesssim T M^{c} \Vert v \Vert^2_{F_a^{-1/2,\delta}(T)} \Vert u_1 \Vert_{F_a^{s,\delta}(T)}
\end{equation*}
We turn to the more involved estimate of $I_1$ and $I_2$. The frequency constraint will be omitted in the following. Compared to the remainder estimate for solutions the multiplier is slightly worse because we do not integrate by parts another time. Moreover, the second resonance can vanish.\\
We split $I_1 = I_{11} + I_{12} + I_{13}$ according to Littlewood-Paley decomposition, which means that we consider $High \times Low \rightarrow High$-interaction for $I_{11}$, $High \times High \rightarrow High$-interaction for $I_{12}$ and $High \times High \rightarrow Low$-interaction for $I_{13}$.\\
If the second resonance does not vanish, then Lemma \ref{lem:HighLowLowHighMultilinearShorttimeEstimateFractionalBenjaminOno} applies and we find
\begin{equation*}
\begin{split}
I_{11} &= 2^{-an} \left| \int_0^T dt \int \left( P_n v P_{k^\prime} u_1 + P_n u_1 P_{k^\prime} v \right) P_{k} u_1 P_{n^\prime} v \right| \\
 &\lesssim T 2^{(2-a+\delta)n} 2^{-an} 2^{-n} 2^{\varepsilon n} \left( \Vert P_n v \Vert_{F^{\delta}_{a,n}} \Vert P_{k^\prime} u_1 \Vert_{F^{\delta}_{a,k^\prime}} + \Vert P_n u_1 \Vert_{F^{\delta}_{a,n}} \Vert P_{k^\prime} v \Vert_{F^{\delta}_{a,k^\prime}} \right) \\
 &\times \Vert P_k u_1 \Vert_{F^{\delta}_{a,k}} \Vert P_{n^\prime} v \Vert_{F^{\delta}_{a,n^\prime}}
\end{split}
\end{equation*}
If the second resonance vanishes, then we use Lemma \ref{lem:VanishingResonanceEstimate} which ameliorates the factor $2^{(2-a+\delta)n}$ from the time localization and gives
\begin{equation*}
I_{11} \lesssim T2^{-an} \left( \Vert P_n v \Vert_{F^{\delta}_{a,n}} \Vert P_{k} u_1 \Vert_{F^{\delta}_{a,k}} + \Vert P_{n} u_1 \Vert_{F^{\delta}_{a,n}} \Vert P_{k} v \Vert_{F^{\delta}_{a,k}} \right) \Vert P_k u_1 \Vert_{F_k} \Vert P_{n} v \Vert_{F^{\delta}_{a,n}}
\end{equation*}
For $I_{12}$ we have to estimate
\begin{equation*}
2^{-an} \int_0^T dt \int P_n v P_{n^\prime} u_1 P_k u_1 P_{n^{\prime \prime}} v dx, \quad k \leq n-10, \; |n-n^\prime| \leq 5, \; |n^{\prime \prime} - n | \leq 5
\end{equation*}
The second resonance satisfies $|\Omega_2^a| \gtrsim 2^{(a+1)n}$. By Lemma \ref{lem:MultilinearL6StrichartzEstimate} we find
\begin{equation*}
\begin{split}
I_{12} &\lesssim T 2^{(2-a+\delta)n} 2^{-an} 2^{-(a+1)n/2} 2^{\varepsilon n} \Vert P_n v \Vert_{F^{\delta}_{a,n}} \Vert P_{n^\prime} u_1 \Vert_{F^{\delta}_{a,n^\prime}} \Vert P_k u_1 \Vert_{F^{\delta}_{a,k}} \Vert P_{n^{\prime \prime}} v \Vert_{F^{\delta}_{a,n^{\prime \prime}}} \\
&\lesssim T 2^{(3/2-5a/2)n} 2^{(\varepsilon +\delta)n} \Vert P_n v \Vert_{F^{\delta}_{a,n}} \Vert P_{n^\prime} u_1 \Vert_{F^{\delta}_{a,n^\prime}} \Vert P_k u_1 \Vert_{F^{\delta}_{a,k}} \Vert P_{n^{\prime \prime}} v \Vert_{F^{\delta}_{a,n^{\prime \prime}}}
\end{split}
\end{equation*}
We turn to $High \times High \rightarrow Low$-interaction: This amounts to estimate
\begin{equation*}
I_{13} = 2^{-an} \int_0^T dt \int P_{m_1} v P_{m_2} u_1 P_k u_1 P_{n^\prime} v \quad (|m_1-m_2| \leq 5, \; n \leq m_1-6)
\end{equation*}
$I_{13}$ is amenable to Lemma \ref{lem:HighLowLowHighMultilinearShorttimeEstimateFractionalBenjaminOno} after adding time localization $T2^{(2-a+\delta)m_1}$ and taking all factors together we find
\begin{equation*}
I_{13} \lesssim T 2^{(1-a)m_1} 2^{(\varepsilon+\delta)m_1} 2^{-an} \Vert P_{m_1} v \Vert_{F^{\delta}_{a,m_1}} \Vert P_{m_2} u_1 \Vert_{F^{\delta}_{a,m_2}} \Vert P_k u_1 \Vert_{F^{\delta}_{a,k}} \Vert P_{n^\prime} v \Vert_{F^{\delta}_{a,n^\prime}}
\end{equation*}
For $I_2$ we use again Littlewood-Paley decomposition to write $I_2 = I_{21} + I_{22} + I_{23}$ like above.\\
Since the deployed arguments are multilinear, the estimates for $I_{21}$ and $I_{22}$ are carried out like above. However, in case of $I_{23}$ we encounter the additional case of comparable frequencies
\begin{equation*}
2^{-n} 2^{-an} 2^{k} \int_0^T dt \int P_n v P_{m_1} u_1 P_{m_2} u_1 P_{n^\prime} v \quad |m_1-m_2| \leq 10, \; |m_1-n| \leq 10
\end{equation*}
which is not necessarily amenable to Lemma \ref{lem:HighLowLowHighMultilinearShorttimeEstimateFractionalBenjaminOno}.\\
But, after adding localization in time $T2^{(2-a+\delta)n}$ and using Lemma \ref{lem:MultilinearL4StrichartzEstimate} in the non-resonant case and Lemma \ref{lem:VanishingResonanceEstimate} in the resonant case we find the estimate
\begin{equation*}
I_{23} \lesssim T 2^{2(1-a)n} 2^{k-n} \Vert P_n v \Vert_{F^{\delta}_{a,n}} \Vert P_{m_1} u_1 \Vert_{F^{\delta}_{a,m_1}} \Vert P_{m_2} u_1 \Vert_{F^{\delta}_{a,m_2}} \Vert P_{n^\prime} v \Vert_{F^{\delta}_{a,n^\prime}},
\end{equation*}
which is again more than enough.\\
In Case II we can not integrate by parts in space to put the derivative on a more favourable factor, thus we have to estimate the expression
\begin{equation}
\label{eq:EnergyEstimateDifferenceCaseIIFractionalBenjaminOno}
\int_0^T dt \int P_n v P_{n^\prime} u_1 P_k v
\end{equation}
Integration by parts in time yields
\begin{equation*}
\begin{split}
II &= 2^{-(an + k)} \left[ P_n v P_{n^\prime} u_1 P_k v \right]_{t=0}^T + 2^{-(an+k)} \left( \int_0^T dt \int \partial_x P_n (v u_1) P_{n^\prime} u_1 P_k v \right. \\
&\left. + \int_0^T dt \int P_n v \partial_x P_{n^\prime} (u_1^2) P_k v + \int_0^T dt \int P_n v P_{n^\prime} u_1 \partial_x P_k(v u_1) \right) \\
&= B_{II}(0;T) + II_1 + II_2 + II_3
\end{split}
\end{equation*}
To derive suitable estimates however, we do not integrate by parts all of \eqref{eq:EnergyEstimateDifferenceCaseIIFractionalBenjaminOno}, but only the part with high frequencies like above. We find for the boundary term with initial frequencies $n \geq \log_2(M)$ following along the above lines of the estimate for $B_I(0;T)$:
\begin{equation*}
B_{II,M}(0;T) \lesssim M^{-c} \Vert v \Vert^2_{F_a^{-1/2,\delta}(T)} \Vert u_1 \Vert_{F^{s,\delta}_a(T)}
\end{equation*}
and for the low frequencies like above
\begin{equation*}
\sum_{1 \leq n \leq m} \sum_{|n-n^\prime| \leq 5} \sum_{k \leq n-6} \int_0^T ds \int dx P_n v P_{n^\prime} u_1 P_k v \lesssim T M^{d} \Vert v \Vert^2_{F^{-1/2}_{a,\delta}(T)} \Vert u_1 \Vert_{F^s_{a,\delta}(T)} 
\end{equation*}
We turn to the estimate of $II_1$. For the evaluation we plug in Littlewood-Paley decomposition of $P_n ( u_1 v)$, and split like above $II_1 = II_{11} + II_{12} + II_{13}$.\\
We have
\begin{equation*}
\begin{split}
II_{11} &= 2^{-(an + k)} 2^n \left( \int_0^T dt \int P_n v P_{k^\prime} u_1 P_{n^\prime} u_1 P_k v + \int_0^T dt \int P_{k^\prime} v P_n u_1 P_{n^\prime} u_1 P_k v \right)\\
&(|n-n^\prime| \leq 5, \quad k,k^\prime \leq n-6)
\end{split}
\end{equation*}
Time localization amounts to a factor of $T 2^{(2-a+\delta)n}$. In the non-resonant case we use Lemma \ref{lem:HighLowLowHighMultilinearShorttimeEstimateFractionalBenjaminOno} and in the resonant case Lemma \ref{lem:VanishingResonanceEstimate} to find gathering all factors
\begin{equation*}
\begin{split}
II_{11} &\lesssim T 2^{-k} 2^{(1-a)n} \Vert P_{n^\prime} u_1 \Vert_{F^{\delta}_{a,n^\prime}} \Vert P_k v \Vert_{F^{\delta}_{a,k}}  \\
&\left( \Vert P_n v \Vert_{F^{\delta}_{a,n}} \Vert P_{k^\prime} u_1 \Vert_{F^{\delta}_{a,k^\prime}} + \Vert P_{k^\prime} u_1 \Vert_{F^{\delta}_{a,k^\prime}} \Vert P_n v \Vert_{F^{\delta}_{a,n}} \right) \Vert P_{n^\prime} u_1 \Vert_{F^{\delta}_{a,n^\prime}} \Vert P_k v \Vert_{F^{\delta}_{a,k}} 
\end{split}
\end{equation*}
For $II_{12}$ we have to estimate
\begin{equation}
\label{eq:EnergyEstimateDifferenceCaseIIReductionII}
2^{(1-a)n-k} \int_0^T dt \int P_{n_1} v P_{n_2} u_1 P_{n^\prime} u_1 P_k v, \quad |n_1-n^\prime| \leq 5, \; |n_2-n^\prime| \leq 5
\end{equation}
For this we use Lemma \ref{lem:MultilinearL6StrichartzEstimate} because the second resonance $|\Omega^2_a| \gtrsim 2^{(a+1)n}$ is favourable:
\begin{equation*}
\eqref{eq:EnergyEstimateDifferenceCaseIIReductionII} \lesssim T 2^{(2-a+\delta)n} 2^{(1-a)n} 2^{-k} 2^{-(a+1)n/2} \Vert P_{n_1} v \Vert_{F^{\delta}_{a,n_1}} \Vert P_{n_2} u_1 \Vert_{F^{\delta}_{a,n_2}} \Vert P_{n^\prime} u_1 \Vert_{F^{\delta}_{a,n^\prime}} \Vert P_k v \Vert_{F^{\delta}_{a,k}}
\end{equation*}
For $II_{13}$ estimate by Lemma \ref{lem:HighLowLowHighMultilinearShorttimeEstimateFractionalBenjaminOno}
\begin{equation*}
\begin{split}
&2^{(1-a)n-k} \int_0^T dt \int P_{m_1} v P_{m_2} u_2 P_{n^\prime} u_2 P_k v \\
&\lesssim T 2^{(1-a+\delta)m_1} 2^{(1-a)n-k} \Vert P_{m_1} v \Vert_{F^{\delta}_{a,m_1}} \Vert P_{m_2} u_2 \Vert_{F^{\delta}_{a,m_2}} \Vert P_{n^\prime} u_2 \Vert_{F^{\delta}_{a,n^\prime}} \Vert P_k v \Vert_{F^{\delta}_{a,k}},
\end{split}
\end{equation*}
where $|m_1-m_2| \leq 5$, $n^\prime \leq m_1-6$.\\
Like above split $II_2 = II_{21} + II_{22} + II_{23}$ and for $II_{21}$ we have to estimate
\begin{equation*}
2^{(1-a)n-k} \int_0^T dt \int P_n v P_{n^\prime} u_1 P_{k^\prime} u_1 P_k v \quad (|n-n^\prime| \leq 3, k, k^\prime \leq n-6)
\end{equation*}
In the non-resonant case we find by applying Lemma \ref{lem:HighLowLowHighMultilinearEstimateFractionalBenjaminOno}
\begin{equation*}
II_{21} \lesssim T 2^{2(1-a)n} 2^{(\delta + \varepsilon)n} 2^{-k} \Vert P_n v \Vert_{F^{\delta}_{a,n}} \Vert P_{n^\prime} u_1 \Vert_{F^{\delta}_{a,n^\prime}} \Vert P_{k^\prime} u_1 \Vert_{F^{\delta}_{a,k^\prime}} \Vert P_k v \Vert_{F^{\delta}_{a,k}}
\end{equation*}
In the resonant case it follows from Lemma \ref{lem:VanishingResonanceEstimate}
\begin{equation*}
II_{21} \lesssim T 2^{(1-a)n-k} \Vert P_n v \Vert_{F^{\delta}_{a,n}} \Vert P_{n^\prime} u_1 \Vert_{F^{\delta}_{a,n^\prime}} \Vert P_{k^\prime} u_1 \Vert_{F^{\delta}_{a,k^\prime}} \Vert P_k v \Vert_{F^{\delta}_{a,k}},
\end{equation*}
which is still sufficient.\\
For $II_{22}$ use Lemma \ref{lem:MultilinearL6StrichartzEstimate} to find
\begin{equation*}
\begin{split}
2^{(1-a)n-k} &\int_0^T dt \int P_n v P_{n_2} u_1 P_{n_3} u_1 P_k v \lesssim T 2^{(2-a+\delta)n} 2^{(1-a)n} 2^{-k} 2^{-(a+1)n/2} 2^{\varepsilon n} \\
&\Vert P_n v \Vert_{F^{\delta}_{a,n}} \Vert P_{n_2} u_1 \Vert_{F^{\delta}_{a,n_2}} \Vert P_{n_3} u_1 \Vert_{F^{\delta}_{a,n_3}} \Vert P_k v \Vert_{F^{\delta}_{a,k}}
\end{split}
\end{equation*}
and for $II_{23}$ we have to estimate
\begin{equation*}
2^{(1-a)n-k} \int_0^T dt \int P_n v P_{m_1} u_1 P_{m_2} u_1 P_k v, \quad |m_1-m_2| \leq 5, \; n \leq m_1-6
\end{equation*}
Here, we apply Lemma \ref{lem:HighLowLowHighMultilinearShorttimeEstimateFractionalBenjaminOno} to find
\begin{equation*}
II_{23} \lesssim T 2^{(1-a+\delta)m_1} 2^{(1-a)n -k} \Vert P_n v \Vert_{F^{\delta}_{a,n}} \Vert P_{m_1} u_1 \Vert_{F^{\delta}_{a,m_1}} \Vert P_{m_2} u_1 \Vert_{F^{\delta}_{a,m_2}} \Vert P_k v \Vert_{F^{\delta}_{a,k}} 
\end{equation*}
The estimate of $II_3$ is easier because the derivative hits a smaller frequency. But all frequencies can be comparable which leads to the expression
\begin{equation*}
2^{-an} \int_0^T dt \int P_n v P_{n^\prime} u_1 P_{m_1} v P_{m_2} u_1,
\end{equation*}
which can also be treated like above with Lemma \ref{lem:HighLowLowHighMultilinearShorttimeEstimateFractionalBenjaminOno} in the non-resonant case and Lemma \ref{lem:VanishingResonanceEstimate} in the resonant case.\\
In Case III we have to estimate
\begin{equation}
\label{eq:EnergyEstimateDifferenceCaseIIIFractionalBenjaminOno}
\int_0^T dt \int P_{n_1} u_1 P_{n_2} v P_{n_3} v 
\end{equation}
with $|n_i - n| \leq 10$ comparable.\\
The resonance is very favourable, and we find after integration by parts in time
\begin{equation*}
\begin{split}
\eqref{eq:EnergyEstimateDifferenceCaseIIIFractionalBenjaminOno} &= 2^{-(a+1)n} \left[ \int P_{n_1} u_1 P_{n_2} v P_{n_3} v \right]_{t=0}^T + 2^{-(a+1)n} \left( \int_0^T dt \int \partial_x P_{n_1} (u_1^2) P_{n_2} v P_{n_3} v \right. \\
&\left. + \int_0^T dt \int P_{n_1} u_1 \partial_x P_{n_2} (v u_1) P_{n_3} v + \int_0^T dt \int P_{n_1} u_1 P_{n_2} v \partial_x P_{n_3} (v u_1) \right) \\
&= B_{III}(0;T) + III_{1} + III_{2} + III_3 
\end{split}
\end{equation*}
Like above integration by parts in time is only carried out for high frequencies, which gives
\begin{equation*}
\sum_{n \geq m} \sum_{|n_i-n| \leq 10} 2^{-(a+1)n} \left[ \int P_{n_1} u_1 P_{n_2} v P_{n_3} v \right]_{t=0}^T \lesssim M^{-d} \Vert v \Vert^2_{F^{-1/2,\delta}_{a}(T)} \Vert u_1 \Vert_{F^{s,\delta}_a(T)}
\end{equation*}
and
\begin{equation*}
\sum_{1 \leq n \leq m} \sum_{|n_i-n| \leq 10} \int_0^T dt \int P_{n_1} u_1 P_{n_2} v P_{n_3} v \lesssim M^{c} T \Vert v \Vert^2_{F^{-1/2,\delta}_a(T)} \Vert u_1 \Vert_{F^{s,\delta}_a(T)}
\end{equation*}
Due to symmetry in the frequencies and multilinearity of the applied estimates we will only estimate $III_1$. We split $III_1 = III_{11} + III_{12} + III_{13}$ according to Littlewood-Paley decomposition. For $III_{11}$ we have to consider
\begin{equation*}
2^{-an} \int_0^T ds \int P_{n_1} u_1 P_k u_1 P_{n_2} v P_{n_3} v, \quad k \leq n-15,
\end{equation*}
and an application of Lemma \ref{lem:MultilinearL6StrichartzEstimate} gives
\begin{equation*}
III_{11} \lesssim T 2^{(2-a+\delta)n} 2^{\varepsilon n} 2^{-(a+1)n/2} 2^{-an} \Vert P_{n_1} u_1 \Vert_{F^{\delta}_{a,n_1}} \Vert P_k u_1 \Vert_{F^{\delta}_{a,k}} \Vert P_{n_2} v \Vert_{F^{\delta}_{a,n_2}} \Vert P_{n_3} v \Vert_{F^{\delta}_{a,n_3}}
\end{equation*}
For $III_{12}$ we have to estimate
\begin{equation*}
2^{-an} \int_0^T dt \int P_{n_1} u_1 P_{n_2} u_1 P_{n_3} v P_{n_4} v
\end{equation*}
with all frequencies comparable, i.e., $|n_i-n| \leq 15$. In the non-resonant case use Lemma \ref{lem:MultilinearL4StrichartzEstimate} and in the resonant case use Lemma \ref{lem:VanishingResonanceEstimate} to find
\begin{equation*}
III_{12} \lesssim T 2^{2(1-a)n} \Vert P_{n_1} u_1 \Vert_{F^{\delta}_{a,n_1}} \Vert P_{n_2} u_1 \Vert_{F^{\delta}_{a,n_2}} \Vert P_{n_3} v \Vert_{F^{\delta}_{a,n_3}} \Vert P_{n_4} v \Vert_{F^{\delta}_{a,n_4}}, \; |n_i-n| \leq 15
\end{equation*}
For $III_{13}$ we have to estimate
\begin{equation*}
2^{-an} \int_0^T dt \int P_{m_1} u_1 P_{m_2} u_1 P_{n_2} v P_{n_3} v, \quad |m_1-m_2| \leq 5, \quad n \leq m_1-6
\end{equation*}
An application of Lemma \ref{lem:HighLowLowHighMultilinearShorttimeEstimateFractionalBenjaminOno} yields
\begin{equation*}
III_{13} \lesssim 2^{-an} T 2^{(1-a+\delta)m_1} 2^{\varepsilon n} \Vert P_{m_1} u_1 \Vert_{F^{\delta}_{a,m_1}} \Vert P_{m_2} u_1 \Vert_{F^{\delta}_{a,m_2}} \Vert P_{n^\prime} v \Vert_{F^{\delta}_{a,n^\prime}} \Vert P_{n^{\prime \prime}} v \Vert_{F^{\delta}_{a,n^{\prime \prime}}}
\end{equation*}
This discloses the analysis of Case III.\\
In Case IV we are considering
\begin{equation}
\label{eq:EnergyEstimateDifferencesCaseIV}
\int_0^T dt \int P_n v (P_{m_1} u_1 P_{m_2} v), \quad |m_1-m_2| \leq 5, \; n \leq m_1-6
\end{equation}
An integration by parts in time yields
\begin{equation*}
\begin{split}
\eqref{eq:EnergyEstimateDifferencesCaseIV} &= 2^{-(am_1 + n)} \left[ \int P_n v P_{m_1} u_1 P_{m_2} v \right]_{t=0}^T \\
&+ 2^{-(am_1+n)} \left( \int_0^T dt \int \partial_x P_n (v u_1) P_{m_1} u_1 P_{m_2} v \right. \\
&\left. + \int_0^T dt \int P_n v \partial_x P_{m_1}( u_1^2) P_{m_2} v + \int_0^T dt \int P_n v P_{m_1} u_1 \partial_x P_{m_2} (v u_1) \right) \\
&= B_{IV}(0;T) + IV_1 + IV_2 + IV_3
\end{split}
\end{equation*}
Like above only the high frequencies are integrated by parts. For the corresponding boundary term we find by H\"older's inequality, Bernstein's inequality and Lemma \ref{lem:FsEmbedding} like for the previous boundary term $B_I$
\begin{equation*}
\begin{split}
B_{IV,M}(0;T) &= \sum_{m_1 \geq m} \sum_{n \leq m_1 - 6} \sum_{|m_1-m_2| \leq 5} 2^{-(am_1+n)} \left[ \int P_n v P_{m_1} u_1 P_{m_2} v \right]_{t=0}^T \\
&\lesssim M^{-d} \Vert v \Vert^2_{F^{-1/2,\delta}_a(T)} \Vert u_1 \Vert_{F^{s,\delta}_a(T)}
\end{split}
\end{equation*}
and for the low frequencies
\begin{equation*}
\sum_{m_1 \leq m} \sum_{n \leq m_1 -6} \sum_{|m_1-m_2| \leq 5} \int_0^T dt \int P_n v P_{m_1} u_1 P_{m_2} v \lesssim T M^{c} \Vert v \Vert^2_{F^{-1/2,\delta}_a(T)} \Vert u_1 \Vert_{F^{s,\delta}_a(T)}
\end{equation*}
Like above we split $IV_1 = IV_{11} + IV_{12} + IV_{13}$. To estimate $IV_{11}$ consider
\begin{equation*}
2^{-am_1} \int_0^T dt \int ( P_n v P_k u_1 + P_k v P_{n} u_1 ) P_{m_1} u_1 P_{m_2} v, \quad k \leq n-6
\end{equation*}
Since the second resonance does not vanish, $IV_{11}$ is amenable to Lemma \ref{lem:HighLowLowHighMultilinearShorttimeEstimateFractionalBenjaminOno} and we find
\begin{equation*}
\begin{split}
IV_{11} &\lesssim T 2^{(1-2a)m_1} 2^{\delta m_1} ( \Vert P_n v \Vert_{F^{\delta}_{a,n}} \Vert P_k u_1 \Vert_{F^{\delta}_{a,k}} + \Vert P_k v \Vert_{F^{\delta}_{a,k}} \Vert P_n u_1 \Vert_{F^{\delta}_{a,n}} ) \\
&\times \Vert P_{m_1} u_1 \Vert_{F^{\delta}_{a,m_1}} \Vert P_{m_2} v \Vert_{F^{\delta}_{a,m_2}} )
\end{split}
\end{equation*}
For $IV_{12}$ we can apply once more Lemma \ref{lem:HighLowLowHighMultilinearShorttimeEstimateFractionalBenjaminOno} to find
\begin{equation*}
IV_{12} \lesssim T 2^{(1-2a)m_1} 2^{\delta m_1} 2^{\varepsilon n} \Vert P_n v \Vert_{F^{\delta}_{a,n}} \Vert P_{n^\prime} u_1 \Vert_{F^{\delta}_{a,n^\prime}} \Vert P_{m_1} u_1 \Vert_{F^{\delta}_{a,m_1}} \Vert P_{m_2} v \Vert_{F^{\delta}_{a,m_2}}
\end{equation*}
and for $IV_{13}$ the only additional case arises when all frequencies are comparable in
\begin{equation*}
2^{-a m_1} \int_0^T dt \int P_{m_3} v P_{m_4} u_1 P_{m_1} u_1 P_{m_2} v, \quad \exists l: |m_i-l| \leq 10
\end{equation*}
In the non-resonant case use Lemma \ref{lem:MultilinearL4StrichartzEstimate} and in the resonant case Lemma \ref{lem:VanishingResonanceEstimate} to find
\begin{equation*}
IV_{13} \lesssim T 2^{2(1-a)l} 2^{\delta l} \Vert P_{m_1} u_1 \Vert_{F^{\delta}_{a,m_1}} \Vert P_{m_2} v \Vert_{F^{\delta}_{a,m_2}} \Vert P_{m_3} v \Vert_{F^{\delta}_{a,m_3}} \Vert P_{m_4} u_1 \Vert_{F^{\delta}_{a,m_4}}
\end{equation*}
We split $IV_2 = IV_{21} + IV_{22} + IV_{23}$. In case $IV_{21}$ we have to estimate
\begin{equation*}
2^{(1-a)m_1} 2^{-n} \int_0^T dt \int P_n v P_{m_1} u_1 P_k u_1 P_{m_2} v \quad (k,n \leq m_1-6, \; |m_1-m_2| \leq 5)
\end{equation*}
In the resonant case this expression is estimated by Lemma \ref{lem:VanishingResonanceEstimate} and in the non-resonant case use Lemma \ref{lem:HighLowLowHighMultilinearShorttimeEstimateFractionalBenjaminOno} to find
\begin{equation*}
IV_{21} \lesssim T 2^{(1-a) m_1} 2^{-n} \Vert P_{m_1} u_1 \Vert_{F^{\delta}_{a,n}} \Vert P_{m_2} v \Vert_{F^{\delta}_{a,m_2}} \Vert P_{n} v \Vert_{F^{\delta}_{a,n}} \Vert P_k u_1 \Vert_{F^{\delta}_{a,k}}
\end{equation*}
For $IV_{22}$ consider
\begin{equation*}
2^{(1-a)m_1-n} \int_0^T dt \int P_n v P_{m_1} u_1 P_{m_2} u_1 P_{m_3} v, \quad \exists m^\prime: n \leq m^\prime-10, |m_i - m^\prime| \leq 7
\end{equation*}
This we estimate by Lemma \ref{lem:MultilinearL6StrichartzEstimate} to find
\begin{equation*}
\begin{split}
IV_{22} &\lesssim T 2^{(2-a+\delta)m_1} 2^{-(a+1)m_1/2} 2^{(1-a)m_1} 2^{-n} \\
&\Vert P_n v \Vert_{F^{\delta}_{a,n}} \Vert P_{m_1} u_1 \Vert_{F^{\delta}_{a,m_1}} \Vert P_{m_2} u_1 \Vert_{F^{\delta}_{a,m_2}} \Vert P_{m_3} v \Vert_{F^{\delta}_{a,m_3}}
\end{split}
\end{equation*}
For $IV_{23}$ we have to estimate
\begin{equation*}
2^{(1-a)m_1} 2^{-n} \int_0^T dt \int P_n v P_{l_1} u_1 P_{l_2} u_1 P_{m_2} v, \quad n \leq m_2-3 \leq l_1-9, \quad |l_1-l_2| \leq 5
\end{equation*}
An application of Lemma \ref{lem:HighLowLowHighMultilinearShorttimeEstimateFractionalBenjaminOno} gives
\begin{equation*}
IV_{23} \lesssim T 2^{(1-a)m_1-n} 2^{(1-a)l_1} 2^{\delta l_1} \Vert P_n v \Vert_{F^{\delta}_{a,n}} \Vert P_{l_1} u_1 \Vert_{F^{\delta}_{a,l_1}} \Vert P_{l_2} u_1 \Vert_{F^{\delta}_{a,l_2}} \Vert P_{m_2} v \Vert_{F^{\delta}_{a,m_2}}
\end{equation*}
$IV_3$ is estimated like $IV_2$. This completes the proof of \eqref{eq:EnergyEstimateIDifferencesFractionalBenjaminOno}.\\[0.5cm]
%

In order to prove \eqref{eq:EnergyEstimateIIDifferencesFractionalBenjaminOno} we write by the fundamental theorem of calculus up to irrelevant factors
\begin{equation*}
\begin{split}
2^{2ns} \Vert P_n v(T) \Vert_{L^2}^2 &= 2^{2ns} \Vert P_n v(0) \Vert_{L^2}^2 + 2^{2ns} \int_0^T dt \int P_n v \partial_x P_n (v^2) \\
&+ 2^{2ns} \int_0^T dt \int P_n v \partial_x P_n (vu) = 2^{2ns} \Vert P_n v(0) \Vert_{L^2}^2 + 2^{2ns} (A+B)
\end{split}
\end{equation*}
where
\begin{equation*}
\begin{split}
A &= 2^{2ns} \int_0^T dt \int_{\Gamma_3} \chi_n^2(\xi_1) \hat{v}(\xi_1) (i \xi_1) \hat{v}(\xi_2) \hat{v}(\xi_3) d\Gamma_3 \\
 &= 2^{2ns} C \int_0^T dt \int_{\Gamma_3} d\Gamma_3 (\chi_n^2(\xi_1) \xi_1 + \chi_n^2(\xi_2) \xi_2 + \chi_n^2(\xi_3) \xi_3) \hat{v}(\xi_1) \hat{v}(\xi_2) \hat{v}(\xi_3) 
\end{split}
\end{equation*}
and after integration by parts in time we find modulo boundary terms
\begin{equation*}
\begin{split}
A &= \left[ \sum_{\substack{\xi_1 + \xi_2 + \xi_3 =0,\\ \xi_i \neq 0}} \frac{(\chi_n^2(\xi_1)\xi_1 + \chi_n^2(\xi_2) \xi_2 + \chi_n^2(\xi_3) \xi_3)}{\xi_1 |\xi_1|^a + \xi_2 |\xi_2|^a + \xi_3 |\xi_3|^a} \hat{v}(\xi_1) \hat{v}(\xi_2) \hat{v}(\xi_3) \right]_{t=0}^T \\
&+ \int_0^T dt \sum_{\substack{\xi_1 + \xi_2 + \xi_3 =0,\\ \xi_i \neq 0}} \frac{(\chi_n^2(\xi_1)\xi_1 + \chi_n^2(\xi_2) \xi_2 + \chi_n^2(\xi_3) \xi_3)}{\xi_1 |\xi_1|^a + \xi_2 |\xi_2|^a + \xi_3 |\xi_3|^a} \hat{v}(\xi_1) \hat{v}(\xi_2) \\
&\times \xi_3 \sum_{\substack{\xi_3 = \xi_{31} + \xi_{32}, \\ \xi_{3i} \neq 0}} \hat{v}(\xi_{31}) \hat{v}(\xi_{32}) \\
&+ \int_0^T dt \sum_{\substack{\xi_1 + \xi_2 + \xi_3 =0,\\ \xi_i \neq 0}} \frac{(\chi_n^2(\xi_1)\xi_1 + \chi_n^2(\xi_2) \xi_2 + \chi_n^2(\xi_3) \xi_3)}{\xi_1 |\xi_1|^a + \xi_2 |\xi_2|^a + \xi_3 |\xi_3|^a} \hat{v}(\xi_1) \hat{v}(\xi_2) \\
&\times \xi_3 \sum_{\substack{\xi_3 = \xi_{31} + \xi_{32}, \\ \xi_{3i} \neq 0}} \hat{v}(\xi_{31}) \hat{u}_2(\xi_{32}) \\
&= B_A(0;T) + A_1 + A_2
\end{split}
\end{equation*}
Set
\begin{equation*}
b_3(\xi_1,\xi_2,\xi_3) = \frac{\chi_n^2(\xi_1) \xi_1 + \chi_n^2(\xi_2) \xi_2 + \chi_n^2(\xi_3) \xi_3}{\xi_1 |\xi_1|^a + \xi_2 |\xi_2|^a + \xi_3 |\xi_3|^a}
\end{equation*}
A second symmetrization like in the proof of the energy estimates for solutions gives
\begin{equation*}
\begin{split}
A_1 &= C \int_0^T dt \int_{\Gamma_4} d\Gamma_4 b_3(\xi_1,\xi_2,\xi_{31}+\xi_{32}) \xi_3 \hat{v}(\xi_1) \hat{v}(\xi_2) \hat{v}(\xi_{31}) \hat{v}(\xi_{32})  \\
&= C \int_0^T dt \int_{\Gamma_4} d\Gamma_4 [b_3(\xi_1,\xi_2,\xi_{31}+\xi_{32}) - b_3(-\xi_{31},-\xi_{32},\xi_{31}+\xi_{32})] \\
&\times \xi_3 \hat{v}(\xi_1) \hat{v}(\xi_2) \hat{v}(\xi_{31}) \hat{v}(\xi_{32}) \\
&= C \int_0^T dt \int_{\Gamma_4} d\Gamma_4 b_4(\xi_1,\xi_2,\xi_{31},\xi_{32}) \hat{v}(\xi_1) \hat{v}(\xi_2) \hat{v}(\xi_{31}) \hat{v}(\xi_{32})
\end{split}
\end{equation*}
and the expression is estimated like in Lemma \ref{lem:RemainderEnergyEstimateSolutionsFractionalBenjaminOno}.\\
To estimate
\begin{equation*}
A_2 = \int_0^T dt \int_{\Gamma_4} d\Gamma_4 b_3(\xi_1,\xi_2,\xi_3) \xi_3 \hat{v}(\xi_1) \hat{v}(\xi_2) \hat{v}(\xi_{31}) \hat{u}_2(\xi_{32})
\end{equation*}
we conduct a Case-by-Case analysis plugging in Littlewood-Paley decomposition. For the interaction of $(v,v,v)$ before integration by parts in time we have to take into account the following cases:
\begin{enumerate}
\item[Case I:] $High \times Low \rightarrow High$ ($|\xi_1| \sim |\xi_3| \gg |\xi_2|$)
\item[Case II:] $High \times High \rightarrow High$ ($|\xi_1| \sim |\xi_2| \sim |\xi_3|$)
\item[Case III:] $High \times High \rightarrow Low$ ($|\xi_3| \ll |\xi_1| \sim |\xi_2|$)
\end{enumerate}
Here, we additionally plug in the possible frequency interactions for $(\xi_3,\xi_{31},\xi_{32})$ like $I = I_1 + I_2 + I_3$. For $I_1$ we have to estimate
\begin{equation*}
\begin{split}
I_1 &= 2^{2ns} 2^{(1-a)n} \left( \int_0^T dt \int P_n v P_k v \left( P_{n^\prime} v P_{k^\prime}u_2 + P_{k^\prime} v P_{n^\prime} u_2 \right) \right), \\
&\; k, k^\prime \leq n-6, \; |n-n^\prime| \leq 5
\end{split}
\end{equation*}
In the non-resonant case both expressions can be handled with Lemma \ref{lem:HighLowLowHighMultilinearShorttimeEstimateFractionalBenjaminOno} and in the resonant case Lemma \ref{lem:VanishingResonanceEstimate} yields
\begin{equation*}
\begin{split}
I_1 &\lesssim 2^{2ns} T 2^{(1-a)n}  \Vert P_n v \Vert_{F^{\delta}_{a,n}} \Vert P_k v \Vert_{F^{\delta}_{a,k}} \\
&\left( \Vert P_{n^\prime} v \Vert_{F^{\delta}_{a,n^\prime}} \Vert P_{k^\prime} u_2 \Vert_{F^{\delta}_{a,k^\prime}} + \Vert P_{k^\prime} v \Vert_{F^{\delta}_{a,k^\prime}} \Vert P_{n^\prime} u_2 \Vert_{F^{\delta}_{a,n^\prime}} \right) 
\end{split}
\end{equation*}
$I_2$ is amenable to Lemma \ref{lem:MultilinearL6StrichartzEstimate} which gives
\begin{equation*}
I_2 \lesssim 2^{2ns} T 2^{(2-a+\delta)n} 2^{(1-a)n} 2^{\varepsilon n} 2^{-(a+1)n/2} \Vert P_n v \Vert_{F^{\delta}_{a,n}} \Vert P_k v \Vert_{F^{\delta}_{a,k}} \Vert P_{n_2} v \Vert_{F^{\delta}_{a,n_2}} \Vert P_{n_3} u_2 \Vert_{F^{\delta}_{a,n_3}}, 
\end{equation*}
where $|n-n_i| \leq 5$, $k \leq n-10$.\\
For $I_3$ consider
\begin{equation*}
2^{2ns} 2^{(1-a)n} \int_0^T dt \int P_n v P_k v P_{l_1} v P_{l_2} u_2, \quad |l_1-l_2| \leq 5, \; n \leq l_1-6, \; k \leq n-6
\end{equation*}
Lemma \ref{lem:HighLowLowHighMultilinearShorttimeEstimateFractionalBenjaminOno} gives
\begin{equation*}
I_3 \lesssim 2^{2ns} T 2^{(1-a+\delta)l_1} 2^{(1-a)n} \Vert P_n v \Vert_{F^{\delta}_{a,n}} \Vert P_k v \Vert_{F^{\delta}_{a,k}} \Vert P_{l_1} v \Vert_{F^{\delta}_{a,l_1}} \Vert P_{l_2} u_2 \Vert_{F^{\delta}_{a,l_2}}
\end{equation*}
Consider Case II next. Split $II = II_1 + II_2 + II_3$. For $II_1$ we have to consider
\begin{equation*}
\begin{split}
&2^{2ns} 2^{(1-a)n} \left( \int_0^T dt \int P_{n_1} v P_{n_2} v P_{n_3} v P_k u_2 + \int_0^T dt \int P_{n_1} v P_{n_2} v P_k v P_{n_3} u_2 \right), \\
&\; |n_1-n_2| \leq 3, |n_1-n_3| \leq 3, k \leq n_1-6
\end{split}
\end{equation*}
This we estimate by Lemma \ref{lem:MultilinearL6StrichartzEstimate} to find
\begin{equation*}
\begin{split}
II_1 &\lesssim 2^{2ns} T 2^{(2-a+\delta)n} 2^{(1-a)n} 2^{-(a+1)n/2} \Vert P_{n_1} v \Vert_{F^{\delta}_{a,n_1}} \Vert P_{n_2} v \Vert_{F^{\delta}_{a,n_2}} \\
&\times ( \Vert P_{n_3} v \Vert_{F^{\delta}_{a,n_3}} \Vert P_k u_2 \Vert_{F^{\delta}_{a,k}} + \Vert P_k v \Vert_{F^{\delta}_{a,k}} \Vert P_{n_3} u_2 \Vert_{F^{\delta}_{a,n_3}})
\end{split}
\end{equation*}
For $II_2$ consider
\begin{equation*}
2^{2ns} 2^{(1-a)n} \int_0^T dt \int P_{n_1} v P_{n_2} v P_{n_3} v P_{n_4} u_2, \quad |n_1-n_i| \leq 10, \; i=2,3,4
\end{equation*}
This we estimate by Lemma \ref{lem:HighLowLowHighMultilinearShorttimeEstimateFractionalBenjaminOno} in the non-resonant case and by Lemma \ref{lem:VanishingResonanceEstimate} in the resonant case to find
\begin{equation*}
II_2 \lesssim 2^{2ns} T 2^{(3-2a)n} \Vert P_{n_1} v \Vert_{F^{\delta}_{a,n_1}} \Vert P_{n_2} v \Vert_{F^{\delta}_{a,n_2}} \Vert P_{n_3} v \Vert_{F^{\delta}_{a,n_3}} \Vert P_{n_4} u_2 \Vert_{F^{\delta}_{a,n_4}}
\end{equation*}
For $II_3$ we have to consider
\begin{equation*}
2^{2ns} 2^{(1-a)n_1} \int_0^T dt \int P_{n_1} v P_{n_2} v P_{l_1} v P_{l_2} u_2, \quad n_1 \leq l_1-10, \; |l_1-l_2| \leq 5, \; |n_1-n_2| \leq 5
\end{equation*}
This is amenable to Lemma \ref{lem:HighLowLowHighMultilinearShorttimeEstimateFractionalBenjaminOno} which yields the estimate
\begin{equation*}
II_3 \lesssim 2^{2ns} T 2^{(1-a+\delta)l_1} 2^{(1-a)n_1} \Vert P_{n_1} v \Vert_{F^{\delta}_{a,n_1}} \Vert P_{n_2} v \Vert_{P^{\delta}_{a,n_2}} \Vert P_{l_1} v \Vert_{F^{\delta}_{a,l_1}} \Vert P_{l_2} u_2 \Vert_{F^{\delta}_{a,l_2}}
\end{equation*}
We estimate $III = III_1 + III_2 + III_3$. For $III_1$ consider
\begin{equation*}
2^{2ns} 2^{(1-a)n} \int_0^T dt \int P_{n_1} v P_{n_2} v (P_k v P_{k^\prime} u_2 + P_{k^\prime} v P_k u_2),
\end{equation*}
where $|n_1-n_2| \leq 5$, $k \leq n_1-6$, $k^\prime \leq k-6$.\\
The expressions are amenable to Lemma \ref{lem:HighLowLowHighMultilinearShorttimeEstimateFractionalBenjaminOno} and we find
\begin{equation*}
\begin{split}
III_1 &\lesssim 2^{2ns} T 2^{2(1-a)n} 2^{\delta n} \Vert P_{n_1} v \Vert_{F^{\delta}_{a,n_1}} \Vert P_{n_2} v \Vert_{F^{\delta}_{a,n_2}} \\
&( \Vert P_k v \Vert_{F^{\delta}_{a,k}} \Vert P_{k^\prime} u_2 \Vert_{F^{\delta}_{a,k^\prime}} + \Vert P_{k^\prime} v \Vert_{F^{\delta}_{a,k^\prime}} \Vert P_k u_2 \Vert_{F^{\delta}_{a,k}} )
\end{split}
\end{equation*}
The same argument applies to $III_2$ because there can not be a resonant case, which gives
\begin{equation*}
\begin{split}
III_2 &\lesssim 2^{2ns} T 2^{(1-a)n_1} 2^{\delta n} \Vert P_{n_1} v \Vert_{F^{\delta}_{a,n_1}} \Vert P_{n_2} v \Vert_{F^{\delta}_{a,n_2}} \Vert P_{l_1} v \Vert_{F^{\delta}_{a,l_1}} \Vert P_{l_2} u_2 \Vert_{F^{\delta}_{a,l_2}} \\
&\; |n_1-n_2| \leq 5, \; |l_1-l_2| \leq 5, \; l_1 \leq n_1-10
\end{split}
\end{equation*}
For $III_3$ we have to consider
\begin{equation*}
\begin{split}
&2^{2ns} \int_0^T dt \int P_{n_1} v P_{n_2} v P_k (P_{l_1} v P_{l_2} u_2), \\
&|l_1-l_2| \leq 5, \; k \leq l_1-10, |n_1-n_2| \leq 5, k \leq n_1-10
\end{split}
\end{equation*}
If $|n_1-l_1| \geq 15$, we can argue like above. Otherwise, all frequencies are comparable and applying Lemma \ref{lem:MultilinearL4StrichartzEstimate} in the non-resonant case and Lemma \ref{lem:VanishingResonanceEstimate} in the resonant case to find
\begin{equation*}
III_3 \lesssim 2^{2ns} T 2^{(3-2a)n} \Vert P_{n_1} v \Vert_{F^{\delta}_{a,n_1}} \Vert P_{n_2} v \Vert_{F^{\delta}_{a,n_2}} \Vert P_{l_1} v \Vert_{F^{\delta}_{a,l_1}} \Vert P_{l_2} u_2 \Vert_{F^{\delta}_{a,l_2}}, |n_1-l_1| \leq 5.
\end{equation*}
For the estimate of $B$ we are again in the situation from the proof of \eqref{eq:EnergyEstimateIDifferencesFractionalBenjaminOno}. The only difference is that we do not have the extra smoothing from the $H^{-1/2}$-input regularity which leads to the shift in regularity.\\
We have the following cases:
\begin{enumerate}
\item[Case I:] $High \times Low \rightarrow High (v,u_2,v)$
\item[Case II:] $High \times Low \rightarrow High (v,v,u_2)$
\item[Case III:] $High \times High \rightarrow High$
\item[Case IV:] $High \times High \rightarrow Low (v,u_2,v)$
\end{enumerate}
To estimate the individual contributions we use exactly the same arguments from above. Hence, we will be brief.\\
In Case $I$ we integrate by parts to put the derivative on the lowest frequency from above to arrive at the expression
\begin{equation}
\label{eq:ReductionIEnergyEstimateL2FractionalBenjaminOno}
2^{2ns} 2^k \int_0^T dt \int dx P_n v P_{k} u_2 P_{n^\prime} v \quad (|n-n^\prime| \leq 5, \; k \leq n-6)
\end{equation}
Integration by parts in time gives modulo boundary terms and irrelevant factors
\begin{equation*}
\begin{split}
\eqref{eq:ReductionIEnergyEstimateL2FractionalBenjaminOno} - B_I(0;T) &= 2^{2ns} 2^{-an} \left( \int_0^T dt \int \partial_x P_n (v (v + u_2)) P_k u_1 P_{n^\prime} v \right. \\
&\left. + \int_0^T dt \int P_n v P_k \partial_x(u_2^2) P_{n^\prime} v \right) \\
&= I_1+I_2 ,\quad (|n-n^\prime| \leq 5, \; k,k^\prime \leq n-6)
\end{split}
\end{equation*}
The boundary terms are handled like in the proof of \eqref{eq:EnergyEstimateIDifferencesFractionalBenjaminOno}. We omit the estimates of the boundary terms in the following. Split $I_1 = I_{11} + I_{12} + I_{13}$. Using Lemma \ref{lem:HighLowLowHighMultilinearShorttimeEstimateFractionalBenjaminOno} in case of non-vanishing resonance and Lemma \ref{lem:VanishingResonanceEstimate} in case of vanishing second resonance
\begin{equation*}
\begin{split}
I_{11} &\lesssim 2^{2ns} T 2^{(1-a)n} \Vert P_{k} u_1 \Vert_{F^{\delta}_{a,k}} \Vert P_n v \Vert_{F^{\delta}_{a,n}} ( \Vert P_{n_1} v \Vert_{F^{\delta}_{a,n_1}} \Vert P_{k_1} u_2 \Vert_{F^{\delta}_{a,k}} \\
&+ \Vert P_{n_1} v \Vert_{F^{\delta}_{a,k_1}} \Vert P_{n_1} u_2 \Vert_{F_{a,n_1}^{\delta}} + \Vert P_{n_1} v \Vert_{F^{\delta}_{a,n_1}} \Vert P_{k_1} v \Vert_{F^{\delta}_{a,k_1}} ) 
\end{split}
\end{equation*}
For $I_{12}$ we find by the above argument
\begin{equation*}
I_{12} \lesssim 2^{2ns} T 2^{(1-a)n} \Vert P_{n_1} v \Vert_{F^{\delta}_{a,n_1}} ( \Vert P_{n_2} u_2 \Vert_{F^{\delta}_{a,n_2}} + \Vert P_{n_2} v \Vert_{F_{a,n_2}^{\delta}}) \Vert P_k u_2 \Vert_{F^{\delta}_{a,k}} \Vert P_{n_3} v \Vert_{F^{\delta}_{a,n_3}}
\end{equation*}
with $|n_i - n| \leq 5$, $k \leq n-10$.\\
Further,
\begin{equation*}
\begin{split}
I_{13} &\lesssim 2^{2ns} T 2^{(1-a)n} 2^{(1-a+\delta)m_1} \Vert P_{m_1} v \Vert_{F^{\delta}_{a,m_1}} \Vert P_k u_2 \Vert_{F^{\delta}_{a,k}} \Vert P_{k^\prime} v \Vert_{F^{\delta}_{a,k^\prime}} \\
&( \Vert P_{m_2} u_2 \Vert_{F^{\delta}_{a,m_2}} + \Vert P_{m_2} v \Vert_{F^\delta_{a,m_2}})
\end{split}
\end{equation*}
In case of $I_{23}$ the additional case of comparable frequency occurs
\begin{equation*}
2^{2ns} 2^{-an} 2^k \int_0^T dt \int P_n v P_{m_1} u_2 P_{m_2} u_2 P_{n^\prime} v, \quad |m_1-m_2| \leq 10, \; |m_1-n| \leq 10
\end{equation*}
and we find by Lemma \ref{lem:MultilinearL4StrichartzEstimate} or Lemma \ref{lem:VanishingResonanceEstimate}, respectively,
\begin{equation*}
I_{23} \lesssim 2^{2ns} T 2^{2(1-a)n} \Vert P_n v \Vert_{F^{\delta}_{a,n}} \Vert P_{m_1} u_2 \Vert_{F^{\delta}_{a,m_1}} \Vert P_{m_2} u_2 \Vert_{F^{\delta}_{a,m_2}} \Vert P_{n^\prime} v \Vert_{F^{\delta}_{a,n^\prime}}
\end{equation*}
In Case $II$ we have to estimate the expression
\begin{equation*}
2^{2ns} 2^n \int_0^T dt \int P_n v P_{n^\prime} u_2 P_k v \quad |n-n^\prime| \leq 5, \; k \leq n-10
\end{equation*}
This we integrate by parts in time to find 
\begin{equation*}
\begin{split}
II -B_{II}(0;T) &= 2^{2ns} 2^{(1-a)n-k} ( \int_0^T dt \int \partial_x P_n (v (v+ u_2)) P_{n^\prime} u_2 P_k v \\
&+ \int_0^T dt \int P_n v \partial_x P_{n^\prime} (u_2^2) P_k v + \int_0^T dt \int P_n v P_{n^\prime} u_2 \partial_x P_k (v(v+ u_2)) ) \\
&= II_1+II_2+II_3
\end{split}
\end{equation*}
By the above notation and arguments we find
\begin{equation*}
\begin{split}
II_{11} &\lesssim 2^{2ns} T 2^{(2-a)n-k} \Vert P_{n^\prime} u_2 \Vert_{F^{\delta}_{a,n^\prime}} \Vert P_k v \Vert_{F^{\delta}_{a,k}} ( \Vert P_n v \Vert_{F^{\delta}_{a,n}} \Vert P_{k^\prime} u_2 \Vert_{F^{\delta}_{a,k^\prime}} \\
&+ \Vert P_{k^\prime} v \Vert_{F^{\delta}_{a,k^\prime}} \Vert P_n u_2 \Vert_{F^{\delta}_{a,n}} + \Vert P_n v \Vert_{F^{\delta}_{a,n}} \Vert P_{k^\prime} v \Vert_{F^{\delta}_{a,k^\prime}}) \quad
k,k^\prime \leq n-10
\end{split}
\end{equation*}
with an improved estimate for $k \neq k^\prime$.\\
For $II_{12}$ estimate by Lemma \ref{lem:MultilinearL6StrichartzEstimate} 
\begin{equation*}
\begin{split}
&2^{2ns} 2^{(2-a)n-k} \int_0^T dt \int P_{n_1} v P_{n_2} u_2 P_{n^\prime} u_2 P_k v \\
&\lesssim 2^{2ns} T 2^{(2-a)n-k} 2^{-(a+1)/2} 2^{(\varepsilon + \delta)n} \Vert P_{n_1} v \Vert_{F^{\delta}_{a,n_1}} ( \Vert P_{n_2} u_2 \Vert_{F^{\delta}_{a,n_2}} + \Vert P_{n_2} v \Vert_{F^{\delta}_{a,n_2}} ) \\
&\Vert P_{n_3} u_2 \Vert_{F^{\delta}_{a,n^\prime}} \Vert P_k v \Vert_{F^{\delta}_{a,k}} \quad |n_i-n^\prime| \leq 5, \; k \leq n-10
\end{split}
\end{equation*}
For $II_{13}$ estimate by Lemma \ref{lem:HighLowLowHighMultilinearShorttimeEstimateFractionalBenjaminOno}
\begin{equation*}
\begin{split}
&2^{2ns} 2^{(2-a)n-k} \int_0^T dt \int P_{m_1} v ( P_{m_2} u_2 + P_{m_2} v) P_{n^\prime} u_2 P_k v \\
&\lesssim 2^{2ns} T 2^{(1-a+\delta)m_1} 2^{(2-a)n-k} \Vert P_{m_1} v \Vert_{F_{a,m_1}^{\delta}}  \Vert P_{n^\prime} u_2 \Vert_{F^{\delta}_{a,n^\prime}} \Vert P_k v \Vert_{F^{\delta}_{a,k}} \\
&( \Vert P_{m_2} u_2 \Vert_{F_{a,m_2}^{\delta}} + \Vert P_{m_2} v \Vert_{F^{\delta}_{a,m_2}} ), 
\end{split}
\end{equation*}
where $|m_1-m_2| \leq 5, \; n \leq m_1-6$.\\
For $II_{21}$ estimate
\begin{equation*}
2^{2ns} 2^{(2-a)n-k} \int_0^T dt \int P_n v P_{n^\prime} u_2 P_{k^\prime} u_2 P_k v, \quad |n-n^\prime| \leq 5, \; k,k^\prime \leq n-10
\end{equation*}
and it follows like above
\begin{equation*}
II_{21} \lesssim 2^{2ns} T 2^{(2-a)n-k} \Vert P_n v \Vert_{F^{\delta}_{a,n}} \Vert P_{n^\prime} u_2 \Vert_{F^{\delta}_{a,n^\prime}} \Vert P_{k^\prime} u_2 \Vert_{F^{\delta}_{a,k^\prime}} \Vert P_k v \Vert_{F^{\delta}_{a,k}}
\end{equation*}
with an improved estimate for $k \neq k^\prime$.\\
For $II_{22}$ we find by Lemma \ref{lem:MultilinearL6StrichartzEstimate}
\begin{equation*}
\begin{split}
II_{22} &\lesssim 2^{2ns} T 2^{(2-a+\delta)n} 2^{\varepsilon n} 2^{(2-a)n-k} 2^{-(a+1)/2} \\
&\Vert P_n v \Vert_{F^{\delta}_{a,n}} \Vert P_{n_2} u_2 \Vert_{F^{\delta}_{a,n_2}} \Vert P_{n_3} u_2 \Vert_{F^{\delta}_{a,n_3}} \Vert P_k v \Vert_{F^{\delta}_{a,k}},
\end{split}
\end{equation*}
where $|n-n_i| \leq 5$, $k \leq n-10$.\\
For $II_{23}$ we find by Lemma \ref{lem:HighLowLowHighMultilinearShorttimeEstimateFractionalBenjaminOno}
\begin{equation*}
II_{23} \lesssim 2^{2ns} T 2^{(2-a)n-k} 2^{(1-a+\delta)m_1} \Vert P_n v \Vert_{F^{\delta}_{a,n}} \Vert P_{m_1} u_2 \Vert_{F^{\delta}_{a,m_1}} \Vert P_{m_2} u_2 \Vert_{F^{\delta}_{a,m_2}} \Vert P_k v \Vert_{F^{\delta}_{a,k}}
\end{equation*}
For $II_3$ we can argue in case of separated frequencies like in $II_1$ or $II_2$ and the conclusion is easier because the derivative hits a low frequency. However, in case of comparable frequencies there is the additional case
\begin{equation}
\label{eq:EnergyEstimateReductionIIIFractionalBenjaminOno}
2^{2ns} 2^{(1-a)n} \int_0^T dt \int P_{n_1} v P_{n_2} u_2 P_{n_3} v ( P_{n_4} u_2 + P_{n_4} v) \quad \exists n: |n_i - n| \leq 10
\end{equation}
This is estimated by Lemma \ref{lem:MultilinearL4StrichartzEstimate} in case of non-vanishing resonance and \ref{lem:VanishingResonanceEstimate} otherwise to find
\begin{equation*}
\begin{split}
\eqref{eq:EnergyEstimateReductionIIIFractionalBenjaminOno} &\lesssim T 2^{(1-a)n} 2^{(2-a)n} \Vert P_{n_1} v \Vert_{F^{\delta}_{a,n_1}} \Vert P_{n_2} u_2 \Vert_{F^{\delta}_{a,n_2}} \Vert P_{n_3} v \Vert_{F^{\delta}_{a,n_3}} \\
&( \Vert P_{n_4} u_2 \Vert_{F^{\delta}_{a,n_4}} + \Vert P_{n_4} v \Vert_{F^{\delta}_{a,n_4}}) 
\end{split}
\end{equation*}
In Case III we find via the above arguments
\begin{align*}
III_{11} &\lesssim 2^{2ns} T 2^{(2-a+\delta)n} 2^{\varepsilon n} 2^{-(a+1)n/2} 2^{(1-a)n} \Vert P_{n_1} u_2 \Vert_{F^{\delta}_{a,n_1}} \Vert P_k u_2 \Vert_{F^{\delta}_{a,k}} \\
&\Vert P_{n_2} v \Vert_{F^{\delta}_{a,n_2}} \Vert P_{n_3} v \Vert_{F^{\delta}_{a,n_3}}, \text{ where } |n-n_i| \leq 10, \; k \leq n-15 \\
III_{12} &\lesssim 2^{2ns} T 2^{(3-2a)n} \Vert P_{n_1} u_2 \Vert_{F^{\delta}_{a,n_1}} \Vert P_{n_2} u_2 \Vert_{F^{\delta}_{a,n_2}} \Vert P_{n_3} v \Vert_{F^{\delta}_{a,n_3}} \Vert P_{n_4} v \Vert_{F^{\delta}_{a,n_4}}, \\
&\text{where }  |n_i-n| \leq 15\\
III_{13} &\lesssim T 2^{(1-a+\delta)m_1} 2^{(1-a)n} 2^{\varepsilon n} \Vert P_{m_1} u_2 \Vert_{F^{\delta}_{a,m_1}} \Vert P_{m_2} u_2 \Vert_{F^{\delta}_{a,m_2}} \Vert P_{n_1} v \Vert_{F^{\delta}_{a,n_1}} \Vert P_{n_2} v \Vert_{F^{\delta}_{a,n_2}}, \\
&\; \text{where } |m_1-m_2| \leq 5, \; n \leq m_1-10
\end{align*}
and due to symmetry and multilinearity the remaining cases are omitted.\\
In Case IV consider
\begin{equation*}
2^{2ns} 2^{n} \int_0^T dt \int P_n v (P_{m_1} u_2 P_{m_2} v), \quad |m_1-m_2| \leq 5, \; n \leq m_1-6
\end{equation*}
With the notation from above we find
\begin{align*}
IV_{11} &\lesssim 2^{2ns} T 2^{n} 2^{(1-2a)m_1} 2^{\delta m_1} \Vert P_{m_1} u_2 \Vert_{F^{\delta}_{a,m_1}} \Vert P_{m_2} v \Vert_{F^{\delta}_{a,m_2}} \\
&( \Vert P_n v \Vert_{F^{\delta}_{a,n}} ( \Vert P_k u_2 \Vert_{F^{\delta}_{a,k}} + \Vert P_k v \Vert_{F^{\delta}_{a,k}} ) + \Vert P_k v \Vert_{F^{\delta}_{a,k}} \Vert P_n u_2 \Vert_{F^{\delta}_{a,n}} )  \\
IV_{12} &\lesssim 2^{2ns} T 2^n 2^{(1-2a)m_1} 2^{\delta m_1} \Vert P_n v \Vert_{F^{\delta}_{a,n}} \Vert P_{m_1} u_2 \Vert_{F^{\delta}_{a,m_1}} \Vert P_{m_2} v \Vert_{F^{\delta}_{a,m_2}} \\
&( \Vert P_{n^\prime} u_2 \Vert_{F^{\delta}_{a,n^\prime}} + \Vert P_{n^\prime} v \Vert_{F^{\delta}_{a,n^\prime}} ) \\
IV_{13} &\lesssim 2^{2ns} T 2^{(2-3a)m_1} \Vert P_{m_2} v \Vert_{F^{\delta}_{a,m_2}} \Vert P_{m_3} v \Vert_{F^{\delta}_{a,m_3}} \Vert P_{m_4} u_2 \Vert_{F^{\delta}_{a,m_4}} \\
&( \Vert P_{m_1} u_2 \Vert_{F^{\delta}_{a,m_1}} + \Vert P_{m_1} v \Vert_{F^{\delta}_{a,m_1}} )
\end{align*}
For the other cases record
\begin{align*}
IV_{21} &\lesssim 2^{2ns} T 2^{(1-a+\delta)m_1} \Vert P_n v \Vert_{F^{\delta}_{a,n}} \Vert P_{m_1} u_2 \Vert_{F^{\delta}_{a,m_1}} \Vert P_k u_2 \Vert_{F^{\delta}_{a,k}} \Vert P_{m_2} v \Vert_{F^{\delta}_{a,m_2}} \\
IV_{22} &\lesssim 2^{2ns} T 2^{(2-a+\delta)m_1} \Vert P_n v \Vert_{F^{\delta}_{a,n}} \Vert P_{m_1} u_2 \Vert_{F^{\delta}_{a,m_1}} \Vert P_{m_2} u_2 \Vert_{F^{\delta}_{a,m_2}} \Vert P_{m_3} v \Vert_{F^{\delta}_{a,m_3}} \\
IV_{23} &\lesssim 2^{2ns} T 2^{(1-a)m_1} 2^{(1-a)l_1/2} 2^{\delta l_1} \Vert P_n v \Vert_{F^{\delta}_{a,n}} \Vert P_{l_1} u_2 \Vert_{F^{\delta}_{a,l_1}} \Vert P_{l_2} u_2 \Vert_{F^{\delta}_{a,l_2}} \Vert P_{m_2} v \Vert_{F^{\delta}_{a,m_2}}
\end{align*}
Case $IV_3$ is omitted due to multilinearity and symmetry.\\
All frequency localized estimates sum up to one of the below expressions choosing $\delta$ sufficiently small
\begin{align*}
&T \Vert v \Vert^3_{F_a^{s,\delta}(T)} \Vert u_2 \Vert_{F_a^{s,\delta}(T)} \\
&T \Vert v \Vert^2_{F_a^{s,\delta}(T)} \Vert u_2 \Vert_{F_a^{s,\delta}(T)}^2 \\
&T \Vert v \Vert_{F_a^{s,\delta}(T)} \Vert v \Vert_{F_a^{-1/2,\delta}(T)} \Vert u_2 \Vert_{F_a^{s+(2-a),\delta}(T)} \Vert u_2 \Vert_{F_a^{s,\delta}(T)}
\end{align*}
This finishes the proof of \eqref{eq:EnergyEstimateIIDifferencesFractionalBenjaminOno}.
\end{proof}

\end{document}